\documentclass[12pt]{amsart}
\def \R{\mathbb{R}}
\def \Z{\mathbb{Z}}

\def \T{\mathscrsfs{T}} 
\def \Critb{{\mathop\mathrm{Crit}}^\bullet}
\def \Crit{{\mathop\mathrm{Crit}}}

\def\cat{{\mathop\mathrm{cat}}}

\def\co{\colon}

\def\Bcat{{\mathop\mathrm{Bcat}}}
\def\Int{{\mathop\mathrm{Int}}}
\def\n{^{-1}}
\def\bs{\backslash}
\newcommand{\pa}[2]{\dfrac{\partial #1}{\partial #2}}
\usepackage{soul}
\usepackage[mathscr]{euscript}

% Copied from mathrsfs.sty
\DeclareSymbolFont{rsfs}{U}{rsfs}{m}{n}
\DeclareSymbolFontAlphabet{\mathscrsfs}{rsfs}

\usepackage{tikz-cd}

\usepackage{graphicx, amscd, amsmath, amsfonts, amsthm, amssymb, color, xparse, hyperref}
\usepackage{mathabx}
\newtheorem{theorem}{Theorem}
\newtheorem{lemma}[theorem]{Lemma}
\newtheorem{corollary}[theorem]{Corollary}
\newtheorem{conjecture}[theorem]{Conjecture}

\newtheorem{proposition}[theorem]{Proposition}

\theoremstyle{definition}
\newtheorem{definition}[theorem]{Definition}

\newtheorem{remark}[theorem]{Remark}
\newtheorem{example}[theorem]{Example}

\usepackage{geometry}

\subjclass[2020]{55M30; 57R70}

%\footnote{Primary classification: 55M30 [Lyusternik-Shnirel'man category of a space, topological complexity à la Farber, topological robotics (topological aspects)]}
%\footnote{Secondary classification: 57R70 [Critical points and critical submanifolds in differential topology]}

%    \newgeometry{vmargin={20mm}, hmargin={20mm,20mm}}

\title[The minimal number of critical points]{The minimal number of critical points of a smooth function on a closed manifold and the ball category.}
\author{Rustam Sadykov}
\address{Kansas State University}
\email{sadykov@ksu.edu}
\author{Stanislav Trunov}
\address{Kansas State University}
\email{stastrunov@ksu.edu}
\date{\today}

\begin{document}
\begin{abstract}   
Introduced by Seifert and Threlfall, cylindrical neighborhoods of isolated critical points of smooth functions is an essential tool in the Lusternik-Schnirelmann theory. We conjecture that every isolated critical point of a smooth function admits a cylindrical ball neighborhood. 
We show that the conjecture is true for cone-like critical points, Cornea reasonable critical points, and critical points that satisfy the Rothe $H$ hypothesis. In particular, the conjecture holds true at least for those critical points that are not infinitely degenerate.

If, contrary to the assertion of the conjecture, there are isolated critical points that do not admit cylindrical ball neighborhoods, then we say that such critical points are exotic.  We prove a Lusternik-Schnirelmann type theorem asserting that the minimal number of critical points of smooth functions without exotic critical points on a closed manifold of dimension at least $6$ is the same as the minimal number of elements in a Singhof-Takens filling of $M$ by smooth balls with corners. 

\end{abstract}
\maketitle

\section{Introduction}

Given a smooth function $f\co M\to \R$ on a manifold $M$ without boundary, a point $x$ in $M$ is said to be \emph{critical} if the differential $d_xf$ of $f$ at $x$ is trivial. A point that is not critical is said to be \emph{regular}.  
We say that a value $c$ of $f$ is \emph{critical} if the fiber $f^{-1}(c)$ contains a critical point.  Critical points of functions could be extremely complicated. For example, the fiber $f^{-1}(c)$ over a regular value $c$ is a hypersurface of $M$ with no singularities. On the other hand, every closed subset $V\subset M$ is the hypersurface level $f^{-1}(c)$ of an appropriately chosen smooth function $f$ on $M$. In other words, critical points of smooth functions are at least as complicated as closed subsets of a manifold. 

Let $x_0$ be an isolated critical point of a smooth function $f$ with critical value $f(x_0)=c$. %{\color{red} A vector field $V$ is \emph{weakly gradient like} if $V(f)> 0$ outside the set of critical points of $f$, and the set of critical points of $f$ coincides with the set of zeroes of $V$.  
We say that an isolating compact neighborhood $U$ of $x_0$  is \emph{cylindrical} if there exists $\varepsilon>0$ such that $U$ consists of trajectories in $f^{-1}[c-\varepsilon, c+\varepsilon]$ of the gradient vector field of $f$ with respect to a Riemannian metric on $M$, see \S\ref{s:pt1}. We will always assume that $U$ is a smooth manifold of dimension $\dim M$ with corners such that $U\cap f^{-1}(c-\varepsilon)$ and $U\cap f^{-1}(c+\varepsilon)$ are smooth manifolds with (smooth) boundary. Cylindrical neighborhoods were introduced by Seifert and Threlfall \cite{ST38} in 1938, and were used in \cite{Ro50, Ta68, Da84} and \cite{Co98}.  In the presence of cylindrical ball neighborhoods of critical points, i.e., cylindrical neighborhoods homeomorphic to a ball,  one may apply, for example, the Lusternik-Schnirelman type argument  and deduce Lusternik-Schnirelman inequalities, see \cite{Ta68, Co98}, and Theorem~\ref{Susan}. We note that in dimensions $\ne 4,5$ all smooth manifolds homeomorphic to a ball are diffeomorphic to a ball.

In the first part of the paper we study cylindrical ball neighborhoods and give evidence supporting Conjecture~\ref{c:main_cyl}.  

\begin{conjecture}\label{c:main_cyl}
Every isolated critical point of any smooth function admits a cylindrical ball neighborhood. 
\end{conjecture}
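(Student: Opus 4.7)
The plan is to reduce to a local problem by working in a coordinate chart around $x_0=0\in\R^n$ equipped with a Riemannian metric, which we may as well take to be Euclidean. Choose $\delta>0$ so that $\bar B_\delta(0)$ contains no other critical point of $f$, and then $\varepsilon>0$ so small that $f\n[c-\varepsilon,c+\varepsilon]\cap\bar B_\delta$ is an isolating block for the gradient flow $\varphi_t$ of $f$. The natural candidate for a cylindrical neighborhood is the set $U$ consisting of $x_0$ together with all points whose forward and backward gradient trajectories, as long as they remain in the slab $f\n[c-\varepsilon,c+\varepsilon]$, do not exit $\bar B_\delta$ through the side. First I would verify that, after possibly perturbing $\varepsilon$ and $\delta$ to secure transversality at the corner, $U$ is a compact manifold with corners whose boundary decomposes into a top cap $U\cap f\n(c+\varepsilon)$, a bottom cap $U\cap f\n(c-\varepsilon)$, and a flow-invariant side.

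The heart of the argument is to show $U$ is homeomorphic to $D^n$. I would proceed in two steps. First, identify the top and bottom caps as topological disks; this is a statement about local level-set topology at an isolated critical value and comes down to understanding the descending and ascending sets of $x_0$ inside $U$. Second, exhibit $U$ itself as a mapping cylinder of the gradient flow between these caps along the side, thereby presenting $U$ as a cone on an $(n-1)$-sphere and hence as $D^n$. For Morse critical points the Morse lemma accomplishes both steps at once. For cone-like, Cornea reasonable, and Rothe $H$ hypothesis critical points, the structural assumptions supply exactly the control over the flow near $x_0$ that is needed: they provide either an explicit cone structure or a \L{}ojasiewicz-type inequality from which the disk structure of the caps and the product structure of the side can be read off.

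The main obstacle I anticipate is the \emph{infinitely degenerate} case, in which $f-c$ is flat at $x_0$ to all orders and no \L{}ojasiewicz inequality is available. Here gradient trajectories need not converge to $x_0$, the stable and unstable sets can in principle be topologically pathological, and neither cap is a disk for any tautological reason. A natural strategy is to approximate $f$ near $x_0$ by a smooth function satisfying the Rothe $H$ hypothesis with the same isolating block; but controlling the homeomorphism type of $U$ under such an approximation seems to require precisely the conclusion one is trying to reach. This is exactly why the authors isolate \emph{exotic} critical points as a hypothetical class: the conjecture asserts that none exist, yet establishing it in full generality appears to demand an idea genuinely beyond current techniques for degenerate gradient dynamics.
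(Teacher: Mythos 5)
You have not proved the statement, but neither does the paper: this is Conjecture~\ref{c:main_cyl}, and the paper offers only partial evidence (Theorem~\ref{th:1}, Corollaries~\ref{c:reas} and~\ref{cor_R}, Theorem~\ref{th:Tak}), explicitly leaving the general case open. Your last paragraph candidly concedes the same point --- that the infinitely degenerate (flat) case is beyond the method --- so as a ``proof'' the proposal is incomplete by its own admission, and that is the correct assessment of the state of the art.

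That said, even the part of your plan covering the tractable cases has a genuine flaw. You propose to show that the top and bottom caps $U\cap f^{-1}(c\pm\varepsilon)$ are topological disks and then realize $U$ as a mapping cylinder between them. The caps need not be disks: already for the monkey saddle $f=x^3-3xy^2$ in $\R^2$ the cylindrical neighborhood is a hexagon whose top and bottom caps are each three disjoint arcs. The paper's argument avoids this entirely: it builds an auxiliary continuous vector field $v+w$ on $H$ --- $v$ a horizontal field extending the radial field of the cone structure on $V=H\cap f^{-1}(c)$, and $w=f\cdot\nabla f$ a vertical field vanishing exactly on the critical level --- which is nonzero on $H\setminus\{x_0\}$ and outward normal along \emph{all} of $\partial H$ (caps and side alike). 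Its flow exhibits $H\setminus\{x_0\}$ as $\partial H\times[0,\infty)$, so $H$ is a cone over its entire boundary, and being a manifold that is a cone over a manifold it is a ball (Lemma~\ref{Kyle}). The gradient flow alone cannot play this role, since it is tangent to the side of $H$ and transverse to the caps. A second, smaller point: your closing strategy of approximating a flat singularity by one satisfying hypothesis $H$ presumes cone-likeness is the relevant dividing line, but Theorem~\ref{th:Tak} shows it is not --- the Takens critical points are not cone-like yet still admit cylindrical ball neighborhoods, by an argument (collapsing the wild set $Q$ inside an honest smooth cylinder $C_\varepsilon(B_{r_3})$) that is entirely different from the cone construction.
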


If there is an open subset $U\subset \R^m$, and a coordinate neighborhood $\tau\co U\to M$ of a point $x\in M$  such that $f\circ \tau$ is the restriction of a polynomial, then we say that the critical point $x$ of $f$ is \emph{algebraic}. In this case, the singular hypersurface $V=f^{-1}(c)$ is locally an algebraic set. It is known (e.g. see \cite{Mi69}) that for every algebraic critical point $x_0$ of $f$ with $f(x_0)=c$, there is a closed disc neighborhood $B_\varepsilon$ of $x_0$ in $M$ such that $\partial B_\varepsilon$ is transverse to $V$, and  the pair $(B_\varepsilon, V\cap B_\varepsilon)$ is homeomorphic to the cone over the pair $(\partial B_\varepsilon, V\cap \partial B_\varepsilon)$. We say that $(B_\varepsilon, V\cap  B_\varepsilon)$ is a \emph{cone neighborhood pair} for $x_0$.  
 This essential property of algebraic singular points is a starting point for the theory of hypersurface singularities, see \cite{Mi69}.   
It follows (see \cite{Ki78, Ki90}, and Proposition~\ref{prop:cone16}) that a critical point $x_0$ admits a cone neighborhood pair if and only if it is \emph{cone-like}, i.e., 
if it admits a cone neighborhood in $f^{-1}(f(x_0))$.

\begin{theorem}\label{th:1} Every cone-like critical point of any smooth function admits a cylindrical ball neighborhood. 
\end{theorem}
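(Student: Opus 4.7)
\emph{Proof plan.} My plan is to construct a cylindrical ball neighborhood by flow-saturating a suitable subset of the cone ball provided by the hypothesis, and then to verify that the resulting region is a topological ball using the cone structure.

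First, applying Proposition~\ref{prop:cone16}, I obtain a closed ball neighborhood $B$ of $x_0$ and a homeomorphism of pairs $(B,V\cap B)\cong(\mathrm{Cone}(L),\mathrm{Cone}(V\cap L))$ sending $x_0$ to the cone point, where $L=\partial B$ and $V=f^{-1}(c)$. The link decomposes as $L=L^+\sqcup W\sqcup L^-$ with $W=L\cap V$ and $L^\pm=L\cap\{\pm(f-c)>0\}$. I fix a Riemannian metric on $M$; since $x_0$ is an isolated critical point, $\xi=\nabla f$ is nonvanishing on $B\setminus\{x_0\}$ and transverse to $V$ at each point of $V\setminus\{x_0\}$.

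Next, for $\varepsilon>0$ sufficiently small, I define $U$ to be the union of $\{x_0\}$ with all maximal gradient trajectory segments $\gamma\subset B$ contained in the slab $f^{-1}[c-\varepsilon,c+\varepsilon]$ that either connect $f^{-1}(c-\varepsilon)$ to $f^{-1}(c+\varepsilon)$ or limit to $x_0$ at one end while exiting the slab at the other. By construction $U$ is a compact neighborhood of $x_0$ consisting of gradient trajectory segments in the slab, so $U$ is cylindrical. Writing $\Sigma$ for its side wall (the trajectory segments in $U$ that meet $\partial B$), the boundary decomposes as $\partial U=U^+\cup U^-\cup\Sigma$, where $U^\pm=U\cap f^{-1}(c\pm\varepsilon)$.

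Using transversality of $\xi$ to $V$ and to the level sets of $f$, the gradient flow should identify $U^\pm$ with a disjoint union of disks indexed by the connected components of $L^\pm$, and $\Sigma$ with $W\times[-1,1]$. These pieces glue along the images of $W$ in the pattern $L=L^+\sqcup W\sqcup L^-$, yielding a topological sphere $\partial U$ homeomorphic to $L$. Combined with the cone deformation retraction of $B$ to $x_0$ (which descends to $U$), this exhibits $U$ as a topological cone over $\partial U\cong S^{n-1}$, hence as a ball.

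The delicate step is verifying rigorously that the gluing of $U^\pm$ and $\Sigma$ produces the sphere $\partial U\cong L$ and that $U$ inherits the requisite cone structure, together with the smoothness-with-corners of the pieces $U^\pm$. The cone structure on $B$ is purely topological while $U$ is defined via the smooth gradient flow, so one must bridge these two descriptions. I expect the key ingredient to be a careful choice of Riemannian metric making $\xi$ compatible with the cone rays near $W$ (so that the flow preserves the cone stratification in a neighborhood of $V$), or, alternatively, a more robust topological argument showing that the flow yields a canonical identification of $L$ with $\partial U$ that is insensitive to the metric.
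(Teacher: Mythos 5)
There is a genuine gap, and it sits exactly where you flag ``the delicate step.'' Two concrete problems. First, your set $U$ is defined as the union of maximal gradient segments \emph{contained in} $B$ that cross the slab; but the side wall of such a set consists of trajectories that touch $\partial B$ without leaving $B$, i.e.\ trajectories tangent to $\partial B$, and nothing controls that locus, so $U$ is not visibly compact, not visibly a neighborhood of $x_0$, and not visibly a manifold with corners. The paper avoids this by saturating a different set: it takes a compact cone neighborhood $V$ of $x_0$ \emph{inside the level set} $f^{-1}(c)$ (which is exactly what the cone-like hypothesis provides) and lets $H$ be the union of all trajectories in the slab that pass through $V$ or limit to $x_0$; then the side wall is precisely the flow-saturation of $\partial V$, a genuine manifold, and even so the verification that $H$ is closed and is a manifold with corners near $D(x_0)$ requires a nontrivial limiting argument. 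Second, your identification of $U^{\pm}$ with ``a disjoint union of disks indexed by the components of $L^{\pm}$'' is false already for a Morse point of index $k$ with $0<k<n$, where $U^{+}\cong S^{n-k-1}\times D^{k}$; so the route ``$\partial U\cong L\cong S^{n-1}$, hence $U$ is a ball'' does not go through as stated.

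The missing idea is the mechanism that makes $U$ (or $H$) a cone over its boundary. The paper builds it explicitly: the cone structure on $V$ gives a radial vector field $v$ on $V\setminus\{x_0\}$, outward normal along $\partial V$; this is propagated to a horizontal vector field on all of $H\setminus\{x_0\}$ by pushing with the differential of the gradient flow and projecting to the level sets; adding $w=f\cdot\nabla f$ yields a continuous field $v+w$, nowhere zero on $H\setminus\{x_0\}$ and outward normal along $\partial H$, whose negative flow carries every point of $H\setminus\{x_0\}$ to $x_0$ and identifies $H\setminus\{x_0\}$ with $\partial H\times[0,\infty)$. That $H$ is then a ball follows not from exhibiting $\partial H$ as a sphere directly, but from the fact that a manifold which is a cone over a manifold is a homotopy ball (the argument of Lemma~\ref{Kyle}) plus the topological Poincar\'e conjecture. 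Finally, note that invoking Proposition~\ref{prop:cone16} at the outset is circular in the logic of the paper: its proof runs through the very vector-field construction above, and the hypothesis of Theorem~\ref{th:1} only gives a cone structure on $V$ inside the level set, not a cone neighborhood pair in the ambient ball.
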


A version of Theorem~\ref{th:1} appears in \cite[p. 396]{Ki78} for manifolds $M$ of dimension $m\ne 4$.

\iffalse
\sout{Every cylindrical neighborhood is an adapted neighborhood \cite{Fu21}, i.e., it is a closed isolated connected neighborhood $U$ of a critical point of a continuous function such that the level set $f^{-1}(a)$ is transverse to $\partial U$ for each $a$ in the interior of $f(U)$. In} \cite{Ki78, Ki90}  \sout{King showed that each cone-like critical point of a continuous function on a manifold of dimension $m \ne 4$ admits an adapted ball neighborhood} \cite[Lemma 2.1]{Fu21}.
\sout{Theorem~\ref{th:1} recovers the result of King in the case of smooth functions, and covers the missing case $m=4$. }

{\color{blue} I decided to remove the previous paragraph. In his papers King is very informal. In the second paper on page 3 he says that "by the argument in" the first paper, he can show the existence of a nice cylindrical neighborhood. In the first paper, he gives a 3 line sketch that such a neighborhood is a ball when $m\ne 4$   }
\fi

In \cite{Co98} Cornea defined a \emph{reasonable critical point} as an isolated critical point $x_0$ of $f$ such that $f^{-1}(f(x_0))$ admits a Whitney stratification into the stratum $\{x_0\}$ and its complement. By \cite[Lemma 2]{Co98} every reasonable critical point admits a cone neighborhood pair, and, therefore,  it
is cone-like. 

\begin{corollary}\label{c:reas} Every reasonable critical point admits a cylindrical ball neighborhood.
\end{corollary}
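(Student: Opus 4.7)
The plan is to present this as a direct consequence of Theorem~\ref{th:1} by interposing Cornea's lemma. Concretely, the strategy has two logical steps: first reduce the reasonable hypothesis to the cone-like hypothesis, then invoke the already-established Theorem~\ref{th:1} for cone-like critical points.

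For the first step, I would quote \cite[Lemma 2]{Co98} directly, as the excerpt already notes: if $x_0$ is a reasonable critical point of $f$, then the Whitney stratification of $f^{-1}(f(x_0))$ into $\{x_0\}$ and its complement furnishes, via a standard argument using a controlled vector field transverse to the strata (the local conical structure of Whitney stratified sets), a cone neighborhood pair $(B_\varepsilon, V\cap B_\varepsilon)$ for $x_0$, where $V=f^{-1}(f(x_0))$. By the characterization recalled just before the statement of Theorem~\ref{th:1} (and proved as Proposition~\ref{prop:cone16}), the existence of a cone neighborhood pair is equivalent to $x_0$ being cone-like, i.e., admitting a cone neighborhood inside $V$ itself. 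Hence the reasonable hypothesis implies the cone-like hypothesis.

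The second step is immediate: having reduced to the cone-like case, Theorem~\ref{th:1} yields a cylindrical ball neighborhood of $x_0$, which is exactly the conclusion of the corollary.

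There is no real obstacle here; the corollary is essentially a concatenation of two already-available results. The only thing to be careful about is to make sure the citation to \cite[Lemma 2]{Co98} is used in precisely the form needed, i.e., producing a cone neighborhood pair in $M$ (not merely a cone structure on some auxiliary space), so that the equivalence with cone-likeness recorded in Proposition~\ref{prop:cone16} applies without further work.
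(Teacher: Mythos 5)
Your proposal is correct and follows exactly the paper's own route: Cornea's Lemma~2 gives a cone neighborhood pair, hence the critical point is cone-like, and Theorem~\ref{th:1} then supplies the cylindrical ball neighborhood. Nothing further is needed.
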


Another large class of critical points was studied by E.~Rothe in \cite{Ro50} and \cite{Ro52}. We say that an isolated critical point $x_0$ satisfies the hypothesis $H$ if there is an isolating coordinate neighborhood $U$ of $x_0$ such that for all $x\ne x_0$ in $U\cap f^{-1}(x_0)$ the vectors $x-x_0$ and $\nabla f(x)$ are linearly independent. 

\begin{corollary}\label{cor_R} Every critical point satisfying the hypothesis $H$ admits a cylindrical ball neighborhood. 
\end{corollary}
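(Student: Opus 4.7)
The plan is to reduce Corollary~\ref{cor_R} to Theorem~\ref{th:1} by showing that hypothesis $H$ already forces $x_0$ to be cone-like; the desired cylindrical ball neighborhood is then supplied directly by Theorem~\ref{th:1}.

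First I would work in the isolating coordinate chart $U$ of hypothesis $H$, placing $x_0$ at the origin of $\R^m$ and setting $r(x)=|x|^2$, so that $\nabla r(x)=2x$ in the ambient Euclidean metric. With $c=f(x_0)$ and $V=f\n(c)$, hypothesis $H$ becomes the statement that $\nabla r$ and $\nabla f$ are linearly independent at every point of $(V\bs\{x_0\})\cap U$. Since $x_0$ is the only critical point of $f$ in $U$, the complement $V\bs\{x_0\}$ is a smooth hypersurface whose unit normal is parallel to $\nabla f$, so hypothesis $H$ is equivalent to the transversality of every small sphere $S_\delta=\{r=\delta\}$ with $V$.

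Next I would construct a radial flow on $V$ near $x_0$. Fix $\varepsilon>0$ with the closed $\varepsilon$-ball $\bar B_\varepsilon$ contained in $U$, and on $V\cap(\bar B_\varepsilon\bs\{x_0\})$ form the orthogonal projection $\nabla^V r$ of $\nabla r$ onto $TV$; hypothesis $H$ forces this projection to be nowhere zero, so $Y:=\nabla^V r/|\nabla^V r|^2$ is a smooth vector field satisfying $Y\cdot r\equiv 1$. Its flow $\varphi_t$ then obeys $r\circ\varphi_t=r+t$, and integration produces a diffeomorphism
\[ \Phi\co (V\cap S_\varepsilon)\times(0,\varepsilon]\longrightarrow V\cap(B_\varepsilon\bs\{x_0\}), \qquad \Phi(y,\delta):=\varphi_{\delta-\varepsilon}(y). \]
Collapsing $(V\cap S_\varepsilon)\times\{0\}$ to the single point $x_0$ extends $\Phi$ to a homeomorphism of the cone on $V\cap S_\varepsilon$ onto $V\cap B_\varepsilon$, with continuity at the cone point forced by $r(\Phi(y,\delta))=\delta\to 0$. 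Thus $x_0$ admits a cone neighborhood in $V$, i.e.\ is cone-like, and Theorem~\ref{th:1} delivers the cylindrical ball neighborhood.

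The substantive ingredient is the non-vanishing of $\nabla^V r$, which is precisely what hypothesis $H$ delivers; once this is in place the construction is a standard Milnor-style radial-flow argument, and the real work of upgrading the cone structure to a cylindrical ball neighborhood has already been absorbed into Theorem~\ref{th:1}. The mild care required is verifying that the flow stays inside $V\cap(\bar B_\varepsilon\bs\{x_0\})$ for $t\in(-\varepsilon,0]$, which is immediate from $r\circ\varphi_t=r+t$ and the positivity of $r$ on that set.
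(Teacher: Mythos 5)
Your proposal is correct and follows essentially the same route as the paper: both use the linear independence in hypothesis $H$ to produce a nowhere-vanishing projection of the radial field onto the tangent spaces of the level set $V\setminus\{x_0\}$, integrate it to exhibit $V$ as a cone over its link (the paper via its Hypothesis $\mathcal{H}'$ and Lemma~\ref{le:13}, you via the normalization $Y\cdot r\equiv 1$), and then invoke the cone-like case handled by the proof of Theorem~\ref{th:1}/Proposition~\ref{prop:6}. Your normalization making $r$ the explicit collar parameter is a minor cosmetic variant of the paper's flow argument, not a different method.
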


We will show that the set of jets of non cone-like critical points is of infinite codimension  in the space of all jets, see Corollary~\ref{c:t17}. In other words, non cone-like critical points are extremely rare and infinitely degenerate.  We also deduce that the critical point of any infinitely determined map germ is cone-like, see Proposition~\ref{p:inf20}. 

Finally, we show that the class of those critical points that admit cylindrical neighborhoods diffeomorphic to a ball is larger than the class of cone-like critical points. 

\begin{theorem}\label{th:Tak} For every $m\ge 4$ there is a smooth function $f\co \R^m\to \R$ with an isolated critical point $x_0$ such that $x_0$ is not cone-like and such that $x_0$ admits a cylindrical  neighborhood diffeomorphic to a ball. 
\end{theorem}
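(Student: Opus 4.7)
The plan is to build $f$ explicitly. Take a model function $f_0\co\R^m\to\R$ with $f_0(0)=0$, isolated critical point at $0$, and cone-like zero set --- for instance $f_0(x)=x_1\|x\|^2$, whose zero set is the hyperplane $\{x_1=0\}$. By Theorem~\ref{th:1} the critical point of $f_0$ at $0$ admits a cylindrical ball neighborhood $U_0\cong D^m$. The idea is to modify $f_0$ by a flat-at-$0$ perturbation that destroys cone-likeness of the zero set while preserving, up to diffeomorphism, the cylindrical ball neighborhood.

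Choose a decreasing sequence $r_k\searrow 0$ and the disjoint closed annuli $A_k=\{r_{k+1}\le\|x\|\le r_k\}$. For each $k$ construct a smooth bump $\eta_k$ compactly supported in $\Int A_k$ whose effect is to deform the zero set of $f_0+\eta_k$ within $A_k$ by a topologically nontrivial modification --- for example, attaching a handle or inserting an extra component of the level set. Arrange these modifications so that the links $L_{r_k}=f^{-1}(0)\cap\partial B_{r_k}$ of $f:=f_0+\sum_k\eta_k$ realize infinitely many distinct homeomorphism types as $k\to\infty$. Require further that $\|\eta_k\|_{C^k}$ decays rapidly enough that $\eta:=\sum_k\eta_k$ is smooth on $\R^m$, flat at $0$, and satisfies $\|\nabla\eta_k\|<\|\nabla f_0\|$ pointwise on $A_k\setminus\{0\}$.

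The three properties follow in turn. Isolation: the gradient domination forces $\nabla f\ne 0$ away from $0$. Non-cone-likeness: a cone neighborhood pair at $0$ would force the homotopy type of $f^{-1}(0)\cap(B_r\setminus\{0\})$ to stabilize as $r\to 0$, but the infinite variation in link type at the radii $r_k$ rules this out. Cylindrical ball: since $\eta$ is flat at $0$, the gradient flow of $f$ on a small cylinder $\{|f|\le\varepsilon\}\cap B_R$ is a $C^\infty$-small perturbation of that of $f_0$, and structural stability of the flow-isolating neighborhood yields a diffeomorphism between the cylindrical neighborhood of $f$ and $U_0\cong D^m$.

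The main obstacle is constructing the bumps $\eta_k$ so that each genuinely alters the link homeomorphism type, rather than inducing only an isotopy, while avoiding the creation of new critical points of $f$ and respecting the stringent norm decay needed for smoothness and flatness at $0$. The hypothesis $m\ge 4$ is essential: within each annulus it provides enough transverse codimension to attach handles or extra components of the level set in topologically distinct ways, and it allows the link embeddings in $S^{m-1}$ to vary through infinitely many inequivalent types. Once these perturbations are in hand, the remaining verifications --- gradient estimates, structural stability of the cylindrical neighborhood under flat-at-$0$ perturbations, and the incompatibility of varying link topology with cone-likeness --- are delicate but standard.
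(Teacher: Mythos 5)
Your strategy has two gaps, and the second is fatal to the overall logic. First, the mechanism for destroying cone-likeness is not established. Cone-likeness asks for the existence of \emph{some} compact neighborhood $V$ of $x_0$ in $f^{-1}(0)$ with $V\setminus\{x_0\}\cong\partial V\times(0,1]$; it does not require the round-sphere links $f^{-1}(0)\cap\partial B_r$ to stabilize. Even for a genuinely cone-like germ $(L\times(0,1])\cup\{x_0\}$ the intersections with round spheres can be compact hypersurfaces of varying topology (a separating hypersurface in $L\times(0,1]$ need not be a copy of $L$), so ``infinitely many link types'' does not by itself contradict cone-likeness. The bumps themselves are also problematic: ``inserting an extra component of the level set'' inside $\Int A_k$ forces a new critical point, since $f$ attains an extremum in the compact region bounded by the new component, while a handle attachment supported in $\Int A_k$ leaves the links at the radii $r_k$, $r_{k+1}$ unchanged, so it is unclear which links are supposed to vary. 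To obstruct \emph{every} cone neighborhood one needs a genuine wildness invariant of the germ of $f^{-1}(0)$ at $x_0$; this is exactly what the paper imports from Takens: the zero set is $\tau(\R^{m-1})$, where $\tau$ collapses a wild compactum $Q$ (the Fox--Artin arc for $m=4$, an intersection of Newman--Mazur manifolds for $m\ge5$) chosen so that no compact neighborhood $U$ of $Q$ satisfies $U\setminus Q\cong\partial U\times(0,1]$.

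Second, your argument for the cylindrical ball neighborhood is self-defeating. If the gradient flow of $f$ on $\{|f|\le\varepsilon\}\cap B_R$ were conjugate, by structural stability, to that of $f_0$, the conjugacy would carry $f_0^{-1}(0)$ to $f^{-1}(0)$ near $x_0$, and the critical point of $f$ would inherit a cone neighborhood from the cone-like critical point of $f_0$ --- contradicting what you want to prove. A perturbation that changes the topology of the zero set is by definition not small in the sense required for structural stability, flatness at $0$ notwithstanding. In the paper the ball property is not obtained by perturbation: the cylindrical neighborhood of $F=f\circ\tau$ is shown directly to be a rectangular cylinder $C_\varepsilon(B_{r_3})$ (Lemma~\ref{l:cr32}), and the quotient $H/Q$ is then shown to be a ball by dimension-specific arguments (the generalized Poincar\'e conjecture for $m\ne4,5$, \cite[Proposition C]{Mi65} for $m=5$, and an ambient isotopy unknotting the wild arc for $m=4$). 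These steps have no analogue in your outline.
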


The functions $f$ that appear in Theorem~\ref{th:Tak} are constructed by Takens in \cite{Tak}. 
The argument in the proof of Theorem~\ref{th:Tak} 
implies that critical points of Takens functions are removable, see Corollary~\ref{c:Tak26}. This 
is closely related to the Funar theorem~\cite[Proposition 2.1]{Fu21} asserting that every cone-like critical point of smooth maps $f\co M\to N$ of a manifold of dimension $m\le 2k-1$ to a manifold of dimension $k\ge 2$ are topologically removable except possibly when $(m, k)\in \{(2,2), (4, 3), (8,5), (16, 9)\}$, and under an additional condition that the critical point admits an adapted neighborhood diffeomorphic to a ball when $(m,k)=(5,3)$.

Theorem~\ref{th:1} and its corollaries are related to the Lusternik-Schnirelmann theory \cite{Co98, Si79, Ta68}, which we discuss in the second part of the paper.

The Lusternik-Schnirelmann category $\cat(X)$ of a topological space $X$ is the least integer $n$ such that the space $X$ admits a covering by $n+1$ open subspaces, each of which is contractible in $X$ to a point. By the Lusternik-Schnirelmann theorem~\cite{LS34}, when $X$ is a closed manifold, the numeric invariant $\cat(X)$ gives a lower bound for the number $\Crit(f)$ of critical points of any smooth function $f$ on $X$, namely, 

\begin{equation}\label{eq:1}
   \cat(X) + 1 \le \Crit (X),
\end{equation}
where $\Crit(X)$ is the minimum of $\Crit(f)$ for all smooth functions $f$ on $X$.
We note that the differential geometry invariant $\Crit(X)$ is hard to compute in general, while the numeric invariant $\cat(X)$ is a homotopy invariant, which, at least in some cases, can be computed by means of homotopy theoretic methods, e.g., see \cite{CLOT}.

We will establish a Lusternik-Schnirelmann type equality for a numeric invariant associated with Singhof-Takens fillings. A Singhof-Takens filling~\cite{Ta68, Si79} of a closed smooth manifold $M$ of dimension $m$ is essentially a covering of $M$ by compact smooth submanifolds of dimension $m$. The compact submanifolds may have corners, while the interiors of covering submanifolds are required to be disjoint. Of particular interest are Singhof-Takens fillings by contractible manifolds, smooth balls with corners, and topological balls which are smooth manifolds homeomorphic to balls, see Remark~\ref{rem:CWhi}. 
The least number $n$ such that every Singhof-Takens filling of $M$ by smooth (respectively,  topological) balls has at least $n+1$ elements is denoted by $\Bcat (M)$ (respectively, $\Bcat^{\T}(M$)). If $M$ is a compact smooth manifold with boundary then $\Bcat(M)$ (respectively, $\Bcat^{\T}(M)$) is the least number $n$ such that every relative filling (see Definition~\ref{RelFilling}) of $M$ by smooth (respectively,  topological) balls with corners has at least $n+1$ elements. We also require the function $f$ minimizing the number of critical points on $M$ in the definition of $\Crit (M)$ to be regular and constant maximal over $\partial M$.
We denote by $\Critb(M)$ the minimum of numbers $\Crit(f)$, where $f$ ranges over all functions on $M$ that only have critical points admitting cylindrical ball neighborhoods.

We will show that $\Bcat(M)$ and $\Bcat^\T (M)$ are closely related to $\Critb(M)$.  

\begin{theorem} \label{Billy} Let $M$ be a smooth compact manifold of dimension at least $6$. Then $\Bcat(M)+1 =\Critb (M)$.  
\end{theorem}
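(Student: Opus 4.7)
The equality is established by proving both inequalities $\Critb(M) \ge \Bcat(M) + 1$ and $\Critb(M) \le \Bcat(M) + 1$ separately. In each direction the gradient flow of a smooth function with respect to an adapted Riemannian metric is the main tool, and the dimension hypothesis $\dim M \ge 6$ enters through Smale's $h$-cobordism theorem and handle cancellation, which are needed to align the smooth structures of balls and cobordism pieces.

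\emph{From function to filling ($\Critb(M) \ge \Bcat(M) + 1$).} Let $f$ be a smooth function on $M$ realizing $\Critb(M) = k$, with critical points $p_1, \ldots, p_k$ admitting cylindrical ball neighborhoods $B_1, \ldots, B_k$. Perturbing $f$ outside the $B_i$, arrange the critical values $c_1 < \cdots < c_k$ to be distinct; since $M$ is closed, $p_1$ is a global minimum and $p_k$ a global maximum. On the complement $M \setminus \bigsqcup \mathrm{Int}(B_i)$ the gradient flow has no fixed points, and each trajectory segment exits one $B_j$ through its top face $\partial^+ B_j$ and terminates by entering some $B_i$ ($i > j$) through $\partial^- B_i$, possibly after passing through side regions of intermediate balls. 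Assign each point $x$ in the complement to the unique $j$ such that the backward trajectory through $x$ first meets some $\partial^+ B_j$, and set $D_j = B_j \cup \{x : x \text{ assigned to } j\}$. Then $M = \bigcup_j D_j$ with pairwise disjoint interiors, and each $D_j$ is obtained from the smooth ball $B_j$ by attaching a collar along $\partial^+ B_j$; hence, after smoothing corners along the codimension-one seams produced by trajectories traversing corners of other $B_i$, each $D_j$ is a smooth ball with corners. This produces a Singhof--Takens filling of $M$ by $k$ smooth balls.

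\emph{From filling to function ($\Critb(M) \le \Bcat(M) + 1$).} Given a Singhof--Takens filling $M = D_1 \cup \cdots \cup D_k$ with $k = \Bcat(M) + 1$, we must construct a smooth function $f$ on $M$ with $k$ critical points, each admitting a cylindrical ball neighborhood. One may interpret the filling as a generalized handle decomposition of $M$, but the pairwise adjacencies of the $D_i$ need not conform to a standard handle-attachment pattern. Here the dimension hypothesis is invoked via Smale's handle cancellation and the $h$-cobordism theorem to isotope the filling, without changing the number of elements, into a form in which the $D_i$ admit a linear ordering such that $D_i$ meets $D_1 \cup \cdots \cup D_{i-1}$ along a standard attaching region. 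In this standard form the classical procedure for building a Morse function from a handle decomposition extends to produce a smooth $f$ with one critical point $p_i$ in the interior of each $D_i$ at strictly increasing values, and with each $D_i$ containing a cylindrical ball neighborhood of $p_i$.

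\emph{Main obstacle.} The central technical difficulty is the smooth-category nature of the objects involved: cylindrical ball neighborhoods are smooth balls with corners and Singhof--Takens fillings are by smooth balls with corners, so the constructions in both directions must produce smoothly standard balls rather than merely topological ones. The dimension hypothesis $\dim M \ge 6$ is the precise setting in which Smale's $h$-cobordism theorem ensures that the requisite cobordism pieces between smooth balls are smoothly trivial, making both directions of the equivalence go through. I expect the most delicate step to be the ``filling to function'' direction, where rearranging an arbitrary ball filling into a handle-compatible form without increasing the number of elements mirrors the classical Takens--Singhof argument and depends essentially on the $h$-cobordism theorem.
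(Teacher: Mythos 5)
Your two-inequality skeleton matches the paper, but both halves have genuine gaps. In the function-to-filling direction, the decisive problem is your claim that each $D_j$ is a \emph{smooth} ball with corners. A cylindrical ball neighborhood is only required to be \emph{homeomorphic} to a ball, and the sublevel set around the global minimum is identified as a disc only via the generalized topological Poincar\'e conjecture (the cone-over-a-homotopy-sphere argument of Lemma~\ref{Kyle}); attaching flow-out collars preserves the homeomorphism type but gives you no control over the smooth structure. This is exactly why the paper proves the lower bound in the form $\Bcat^{\T}(M)+1\le \Critb(M)$ (Theorem~\ref{th:13lower}) and only afterwards uses $\dim M\ne 4,5$ to identify $\Bcat^{\T}$ with $\Bcat$ --- that identification, not $h$-cobordism, is the sole place the dimension hypothesis enters. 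Your backward-trajectory assignment also leaves unverified that the pieces $D_j$ are manifolds with corners and that they satisfy the ordered local-chart condition (P3) at multiple intersections; the paper avoids this by an inductive level-by-level construction in which the vector field is modified near each critical level so that the existing filling elements extend as honest cylinders and the new cylindrical neighborhood is adjoined as one new element.

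In the filling-to-function direction your proposal does not work as stated. A Singhof--Takens filling by balls is not a handle decomposition, and there is no argument given (nor an obvious one) for ``isotoping the filling into handle-compatible form'' via handle cancellation without changing the number of elements; this step is asserted, not proved. The paper instead builds a filtration $M_1\subset\cdots\subset M_k$ by taking collar neighborhoods of the unions $U_1\cup\cdots\cup U_i$, shows each cobordism $M_i\setminus\Int(M_{i-1})$ admits a \emph{nice} filling by two collars and one disc, and invokes Takens' theorem (Theorem~\ref{th:13}) to produce a function with one critical point per step --- no $h$-cobordism theorem is needed. Moreover, your sketch never addresses the defining constraint of $\Critb$: the constructed critical points must admit cylindrical \emph{ball} neighborhoods. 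In the paper this is the content of Proposition~\ref{p:Tak19}: the middle ball $Q_3$ of a nice filling is collapsed to a point, producing a cone-like critical point (hence one with a cylindrical ball neighborhood by Theorem~\ref{th:1}), and Takens' smoothing of the collapsed function does not disturb a sufficiently large cylindrical neighborhood. Without some version of that argument the inequality $\Critb(M)\le\Bcat(M)+1$ is not established.
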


The statement of Theorem~\ref{Billy} is true for manifolds $M$ with $\dim (M)=2$. In the case where $\dim (M)=3$ the conclusion of Theorem~\ref{Billy} also remains true;  it is essentially the Takens theorem (see the proof of \cite[Theorem 3.3]{Ta68}) together with Proposition~\ref{p:Tak19}.

%Since clearly $\cat(M)\le \Bcat^\T(M)$, Theorem~\ref{Billy} implies the Lusternik-Schnirelmann estimate. 

Since every smooth manifold of dimension $\ne 4,5$ homeomorphic to a ball is actually diffeomorphic to a ball, the two invariants $\Bcat^\T(M)$ and $\Bcat(M)$ agree for manifolds $M$ of dimension $\ne 4,5$. In particular, Theorem~\ref{Billy} follows from general estimates of Theorem~\ref{Susan} that hold for manifolds of arbitrary dimension. 

\begin{theorem}\label{Susan} Let $M$ be a smooth compact manifold. Then $\Bcat^{\T}(M)+1 \le \Critb(M) \le \Bcat(M) +1$. 
\end{theorem}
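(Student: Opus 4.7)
The plan is to prove the two inequalities $\Bcat^{\T}(M)+1\le \Critb(M)$ and $\Critb(M)\le \Bcat(M)+1$ by separate constructions of Lusternik-Schnirelmann-Takens type. The right-hand inequality converts a Singhof-Takens filling by smooth balls with corners into a smooth function whose critical points all have cylindrical ball neighborhoods, while the left-hand inequality does the opposite, using the gradient flow to extend cylindrical ball neighborhoods into a filling by topological balls. When $M$ has non-empty boundary, I follow the convention from the definition of $\Critb(M)$ that $\partial M=f\n(\max f)$ is the top regular level set.

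For $\Critb(M)\le \Bcat(M)+1$, take a Singhof-Takens filling $\{B_1,\dots,B_k\}$ by smooth balls with corners with $k=\Bcat(M)+1$, reorder the pieces so that each partial union $N_i=B_1\cup\dots\cup B_i$ is connected with $F_i=B_i\cap N_{i-1}$ a union of faces of $B_i$, and, when $M$ has boundary, arrange those $B_i$ meeting $\partial M$ to come last. Following the Singhof-Takens construction in~\cite{Si79, Ta68}, I define $f$ inductively: on $B_1$ place a smooth radial function with a unique interior critical point $x_1$ at a chosen value $v_1$; extend over $B_i$ by a radial bowl around a new critical point $x_i$ at value $v_i>v_{i-1}$ that matches the already constructed $f$ across $F_i$, using the collar neighborhood theorem for manifolds with corners to arrange smooth matching and to keep the gradient nonzero on every shared face. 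For the boundary balls, arrange $f$ to be constant maximal on $B_i\cap \partial M$. The resulting $f$ has exactly $k$ critical points, and each $B_i$ (after a mild inward shrinking to realize the cylindrical condition at some $\varepsilon_i$) is a cylindrical ball neighborhood of $x_i$.

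For $\Bcat^{\T}(M)+1\le \Critb(M)$, let $f$ realize $\Critb(M)$ with critical points $x_1,\dots,x_k$ and cylindrical ball neighborhoods $U_1,\dots,U_k$ homeomorphic to balls. After a small perturbation supported near the critical points that preserves the cylindrical ball structure, assume the critical values are distinct, $c_1<\dots<c_k$, and choose regular values $a_i\in (c_i,c_{i+1})$ so that the cylindrical slabs $f\n([c_i-\varepsilon_i,a_i])$ containing the $U_i$ are pairwise disjoint. Set $\tilde M_i=f\n((-\infty,a_i])$. I construct, by induction on $i$, a Singhof-Takens filling of $\tilde M_i$ by $i$ topological balls. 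In the inductive step let $E_i\subset f\n(a_{i-1})$ be the set of points whose forward gradient trajectory enters the bottom cap $U_i^-$; enlarge each $V_j^{(i-1)}$ by the gradient-flow cylinder over $(V_j^{(i-1)}\cap f\n(a_{i-1}))\setminus E_i$ up to level $a_i$, and introduce $V_i^{(i)}$ as $U_i$ together with the flow cylinder over $E_i$ attached along $U_i^-$. When $M$ has boundary, one further collar extension through the top regular slab from $\max\{a_i\}$ up to $\partial M$ completes the filling.

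The main obstacle is verifying that each enlarged piece $V_j^{(i)}$ is still a topological ball. The key topological fact is that if $X$ is homeomorphic to a ball and $A\subset \partial X$ is a codimension-zero submanifold with corners, then attaching an external collar $A\times [0,1]$ along $A$ yields a space still homeomorphic to $X$, essentially because the added collar can be absorbed into the interior collar of $\partial X$. To invoke this at the inductive step, the set $E_i$ must meet the existing top-boundary decomposition $\{V_j^{(i-1)}\cap f\n(a_{i-1})\}$ in a codimension-zero-with-corners manner; I would arrange this by a small general-position isotopy of $U_i^-$ (equivalently, by a slight shrinking of $U_i$) using the flexibility in the definition of a cylindrical ball neighborhood. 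Once this topological-ball property is secured, the construction produces $k$ topological balls with pairwise disjoint interiors covering $M$, yielding $\Bcat^{\T}(M)+1\le k=\Critb(M)$ and completing the proof.
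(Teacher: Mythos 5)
Your second half (the inequality $\Bcat^{\T}(M)+1\le \Critb(M)$) follows essentially the same route as the paper's Theorem~\ref{th:13lower}: induct over the critical levels, extend the existing pieces of the filling upward by gradient-flow cylinders, and insert the cylindrical ball neighborhood of the new critical point as a new element. Your collar-absorption remark is the right reason the pieces stay topological balls, and your ``general position isotopy'' plays the role of the transversality hypothesis in the paper's Lemma~\ref{Emma}. What you gloss over there is the verification of Property~P3 of a Singhof--Takens filling (the local normal form where several pieces meet, which also forces a specific \emph{ordering} of the pieces --- the new ball must be listed first) and the base case, which the paper settles with Lemma~\ref{Kyle}; but the strategy is the paper's.

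The first half (the inequality $\Critb(M)\le \Bcat(M)+1$) has a genuine gap at its central step. You propose to extend $f$ over each ball $B_i$ by ``a radial bowl around a new critical point $x_i$ at value $v_i>v_{i-1}$ that matches the already constructed $f$ across $F_i$.'' But $f|F_i$ is already determined and is in general a non-constant function on a union of faces of $\partial B_i$, while $f$ must also be regular on the remaining faces (to be matched later); a radial function on a ball cannot agree with non-constant prescribed boundary values, and once you abandon radiality you must actually prove that the extension can be arranged to have exactly one critical point. This is precisely the hard content of Takens' Theorems~2.7--2.8: one first collapses a ball in the critical level to a point, obtaining a continuous function with a potentially bad singularity, and then smooths it to a function with a single critical point. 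The paper does not reprove this; it reduces to it by building a filtration $M_1\subset\cdots\subset M_k$ of collar neighborhoods and ``nice fillings'' of the slices $M_i\setminus\Int(M_{i-1})$ (Theorem~\ref{th:18upper}), and then needs the additional Proposition~\ref{p:Tak19} to check that the resulting critical point is cone-like and hence admits a cylindrical ball neighborhood --- a point your sketch also elides. Relatedly, your closing claim that ``each $B_i$ (after a mild inward shrinking) is a cylindrical ball neighborhood of $x_i$'' cannot be correct as stated: a cylindrical neighborhood must consist of full gradient trajectories inside a thin slab $f^{-1}[v_i-\varepsilon,v_i+\varepsilon]$, and $B_i$ is not contained in such a slab in your construction. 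To repair this half you should either invoke Takens' extension theorem (as the paper does) or supply a genuine construction of the one-critical-point extension together with a cylindrical ball neighborhood for the critical point it produces.
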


Since $\Crit(M)\le \Critb(M)$, Theorem~\ref{Susan} establishes an upper bound for $\Crit(M)$. 

\begin{corollary} Let $M$ be a smooth compact manifold. Then $\Crit(M)\le \Bcat(M)+1$.
\end{corollary}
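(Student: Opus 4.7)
The plan is to derive this as an immediate consequence of Theorem~\ref{Susan}, using the definitional inequality $\Crit(M) \le \Critb(M)$. Concretely, I would argue in two short steps.

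First, I would observe that $\Crit(M) \le \Critb(M)$. This is purely a matter of comparing the two minima. The quantity $\Critb(M)$ is defined as the minimum of $\Crit(f)$ taken over the subclass of smooth functions all of whose critical points admit cylindrical ball neighborhoods, while $\Crit(M)$ is the minimum of $\Crit(f)$ taken over the full class of smooth functions (subject to the same boundary conditions: $f$ regular and constant maximal on $\partial M$ when $M$ has boundary). Since the minimum over a subclass is at least the minimum over the ambient class, the inequality follows with no additional argument.

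Second, I would combine this with the upper estimate $\Critb(M) \le \Bcat(M)+1$ provided by Theorem~\ref{Susan}. Chaining the two inequalities yields $\Crit(M) \le \Critb(M) \le \Bcat(M)+1$, which is the claim. There is no real obstacle here; the only point requiring a modicum of care is to confirm that the definitions of $\Crit(M)$ and $\Critb(M)$ impose the same boundary normalization on the admissible functions $f$, so that the subclass inclusion used in the first step is genuine. Once that is checked, the corollary is formally automatic.
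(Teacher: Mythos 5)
Your proposal is correct and matches the paper's own argument exactly: the paper derives the corollary from $\Crit(M)\le \Critb(M)$ (minimum over a subclass of functions dominates the minimum over all functions) combined with the upper bound $\Critb(M)\le \Bcat(M)+1$ of Theorem~\ref{Susan}. Your added remark about checking that both invariants impose the same boundary normalization is a sensible precaution and consistent with the definitions in the paper.
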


Theorem~\ref{Susan} recovers the Cornea theorem~\cite{Co98}, in which a manifold $M$ is essentially decomposed into cones rather than balls, while functions  
$f$ are restricted to have only \emph{reasonable critical points}, i.e., isolated critical points $x_0$ such that $f^{-1}(f(x_0))$ is Whitney stratified into two strata, namely, $\{x_0\}$ and its complement. 
The minimum number of critical points for such functions on $M$ is denoted by $\Crit^{W}(M)$.

\begin{theorem}[Cornea, 1997]\label{th:C} Let $M$ be a smooth compact manifold. Then $\mathop\mathrm{cl}(M)+1\le \Crit^{W}(M)$, where $\mathop\mathrm{cl}(M)$ is the cone length of $M$. 
\end{theorem}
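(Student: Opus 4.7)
The plan is to take a smooth function $f\co M\to\R$ with only reasonable critical points that realizes $\Crit^W(M)=k$, and to read off a cone decomposition of $M$ of length $k-1$ from the sublevel sets of $f$. After a $C^2$-small perturbation one can assume that the critical values $c_1<c_2<\cdots<c_k$ are pairwise distinct, with a unique reasonable critical point $x_i$ over $c_i$. Choose $\varepsilon>0$ so small that each interval $[c_i-\varepsilon,c_i+\varepsilon]$ contains only the critical value $c_i$, and set $M_i=f^{-1}(-\infty,c_i+\varepsilon]$. The point $x_1$ is necessarily a local minimum, so $M_1$ is a smooth ball and therefore contractible; while $M_k$ is diffeomorphic to $M$ (if $M$ is closed) or to $M$ relative to $\partial M$ in the bounded case.

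The inductive step is to show $M_i\simeq M_{i-1}\cup_{\phi_i}\mathrm{cone}(L_i)$ for some ``descending link'' $L_i$ and attaching map $\phi_i\co L_i\to M_{i-1}$. Via the gradient flow, $M_{i-1}$ is diffeomorphic to $f^{-1}(-\infty,c_i-\varepsilon]$, and $M_i$ is obtained from it by gluing the cylindrical ball neighborhood $U_i$ of $x_i$ supplied by Corollary~\ref{c:reas} along its descending piece $\partial^-U_i=U_i\cap f^{-1}(c_i-\varepsilon)$. Because $x_i$ is reasonable, \cite[Lemma 2]{Co98} provides a cone neighborhood pair for $x_i$ in the singular level $V_i=f^{-1}(c_i)$, so $U_i\cap V_i$ is homeomorphic to the cone over its boundary with cone point $x_i$. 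Combining this with the cylindrical trajectory structure of $U_i$ on each side of $V_i$ should produce a deformation retraction of the pair $(U_i,\partial^-U_i)$ onto $(\mathrm{cone}(L_i),L_i)$, where $L_i$ is identified with $\partial^-U_i$.

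Iterating over $i=1,\ldots,k$ then exhibits $M$ as a contractible space $M_1$ followed by $k-1$ cone attachments, yielding $\mathop\mathrm{cl}(M)\le k-1$ and hence the desired inequality. The main obstacle is precisely the homotopy-theoretic geometric step of the second paragraph: promoting the local cone structure on the singular level set $V_i\cap U_i$ to a global cone structure on the full thickening $U_i$ relative to $\partial^-U_i$. In effect one must show that the flow of $\nabla f$ restricted to $U_i\setminus\{x_i\}$ gives a product decomposition of $U_i\setminus\{x_i\}$ over its intersection with $V_i\setminus\{x_i\}$, and that this product decomposition is compatible with the radial cone coordinate on $V_i\cap U_i$; once this compatibility is in hand, the two structures splice together into the required cone retraction and the argument is complete.
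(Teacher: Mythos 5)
Your route is genuinely different from the paper's. The paper does not prove Cornea's theorem directly: it deduces it from Theorem~\ref{Susan} via the chain $\mathop\mathrm{cl}(M)+1\le\Bcat^{\T}(M)+1\le\Critb(M)\le\Crit^{W}(M)$, where the middle inequality is the filling-theoretic Theorem~\ref{th:13lower} (an inductive construction of a Singhof--Takens filling, one new topological ball per critical point), the last inequality is Corollary~\ref{c:reas}, and the first is the observation that every topological ball is a cone. The paper's route buys a stronger geometric conclusion (an honest decomposition of $M$ into balls with disjoint interiors); your direct route, which is essentially Cornea's original argument via sublevel sets, buys brevity because it only aims at the homotopy-theoretic statement. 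The outline is sound, including the reduction to the inductive step $M_i\simeq M_{i-1}\cup_{\phi_i}\mathrm{cone}(L_i)$ and the use of Lemma~\ref{Kyle}-type contractibility of $M_1$.

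The step you flag as the ``main obstacle'' is, as written, an acknowledged gap, but it is easier to close than you suggest: you do not need to splice the radial cone coordinate on $V_i\cap U_i$ with the trajectory structure of $U_i$, nor a deformation retraction of $(U_i,\partial^-U_i)$ onto $(\mathrm{cone}(L_i),L_i)$. Two weaker facts suffice. First, because $U_i$ is cylindrical, $f^{-1}[c_i-\varepsilon,c_i+\varepsilon]\setminus\Int(U_i)$ is a union of complete trajectories and hence a product $W\times[0,1]$ over its bottom $W$, with $\partial_vU_i$ corresponding to $A\times[0,1]$ for $A=\partial(\partial^-U_i)\subset\partial W$; the standard NDR-pair retraction of $W\times[0,1]$ onto $W\times\{0\}\cup A\times[0,1]$ then deformation retracts $M_{[0,\,c_i+\varepsilon]}$ onto $M_{[0,\,c_i-\varepsilon]}\cup U_i$, so $M_i\simeq M_{i-1}\cup_{\partial^-U_i}U_i$ as a homotopy pushout. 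Second, $U_i$ is homeomorphic to a ball --- this is exactly what Corollary~\ref{c:reas} delivers and is the only place reasonableness is used --- hence contractible, and $\partial^-U_i\hookrightarrow U_i$ is a cofibration; for any cofibration $A\hookrightarrow X$ with $X$ contractible one has $Y\cup_A X\simeq Y\cup_A CA$, the mapping cone of $A\to Y$. This yields $M_i\simeq M_{i-1}\cup_{\phi_i}\mathrm{cone}(\partial^-U_i)$ with no further geometry, and your induction gives $\mathop\mathrm{cl}(M)\le\Crit^{W}(M)-1$ as desired.
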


 %For manifolds of arbitrary dimension we have a general estimate. 

%Theorem~\ref{Billy} follows from Theorem~\ref{Susan} as every smooth manifold of dimension at least 6 homeomorphic to a ball is actually diffeomorphic to a ball, and therefore $\Bcat^\T(M)=\Bcat(M)$  for manifolds $M$ of dimension at least $6$. 

%Every reasonable critical point $x_0$ of a smooth function $f$ admits \cite{Co98} a closed neighborhood $V$ in $f^{-1}(f(x_0))$ such that $V$ is a cone over $\partial V$. Consequently,

 By Corollary~\ref{c:reas}, every reasonable critical point $x_0$ admits a cylindrical ball neighborhood. On the other hand, every topological ball is a cone. Consequently,  
\[
   \mathop\mathrm{cl}(M)+1\le \Bcat^\T(M)+1\le \Critb(M)\le \Crit^W(M),
\]
and therefore, Theorem~\ref{Susan}  indeed implies Theorem~\ref{th:C}.

\iffalse
\begin{remark} 
An isolated singular point $x_0$ of a smooth map $M\to N$ is \emph{cone-like} if it admits a cone neighborhood in $f^{-1}(f(x_0))$. Every cone-like critical point of a function admits a cylindrical ball neighborhood, but the converse is not true \cite{ST}.  It is known that if $f$ has a finitely determined map germ at $x_0$, then $x_0$ is cone-like. On the other hand, the set of jets of map germs that are not finitely determined is a proalgebraic subset of infinite codimension in the space of all jets. In other words, unless $f$ is infinitely degenerate at $x_0$, the point $x_0$ is cone-like, and therefore admits a cylindrical ball neighborhood (for details, see \cite{ST}). 
\end{remark}
\fi

\begin{conjecture}\label{c:6}
All critical points of smooth functions admit cylindrical ball neighborhoods, and therefore $\Bcat(M)+1=\Crit M$ when $\dim M\ne 4, 5$. 
\end{conjecture}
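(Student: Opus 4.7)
The conjecture consists of a local component --- that every isolated critical point admits a cylindrical ball neighborhood --- and a global consequence. The global equality follows formally from the local statement: if every isolated critical point admits a cylindrical ball neighborhood, then $\Crit(M)=\Critb(M)$ for every smooth compact $M$; combined with the double inequality of Theorem~\ref{Susan} and the fact that in dimensions $\ne 4,5$ every smooth manifold homeomorphic to a ball is diffeomorphic to a ball, so that $\Bcat^{\T}(M)=\Bcat(M)$, one obtains $\Bcat(M)+1=\Critb(M)=\Crit M$. Hence the plan reduces entirely to establishing the local conjecture.

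For the local part, fix an isolated critical point $x_0$ of $f$ with $f(x_0)=c$, pick a Riemannian metric adapted to $f$ in a small isolating coordinate ball $B$ around $x_0$, and for a generic small $\varepsilon>0$ let $U$ be the closure of the union of gradient trajectories through $B$ that remain in $f^{-1}[c-\varepsilon,c+\varepsilon]$. This yields a compact cylindrical neighborhood of $x_0$ with corners along $f^{-1}(c\pm\varepsilon)$. A flow of $\nabla f/\|\nabla f\|^2$ away from $x_0$ deformation retracts $U$ onto the local level set $L=U\cap f^{-1}(c)$, which in turn is radially contracted to $\{x_0\}$ inside the isolating chart, so $U$ is contractible. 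To conclude that $U$ is a topological ball it then suffices, by standard recognition, to prove that $\partial U$ is a topological sphere.

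The decisive obstacle arises in this last step when $x_0$ is infinitely degenerate: the lateral part of $\partial U$, the two caps $U\cap f^{-1}(c\pm\varepsilon)$, and their gluing along the corner $L\cap\partial U$ can be pathological, since $f^{-1}(c)$ need not admit any cone neighborhood near $x_0$. The most promising strategy exploits Corollary~\ref{c:t17}: non cone-like jets form a proalgebraic subset of infinite codimension, so one may seek a smooth one-parameter family $f_t$ with $f_0=f$ and $f_t$ cone-like at $x_0$ for $t>0$, whose cylindrical neighborhoods $U_t$ are mutually homeomorphic for small $t$; Theorem~\ref{th:1} applied to $f_t$ would then force $U=U_0$ to be a ball. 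Making this rigorous requires both a density statement for cone-like critical points in an appropriate space of germs and a continuity-of-type result for cylindrical neighborhoods under $C^0$-small perturbations of the pair $(f,g)$. A complementary route is to extend the metric-modification argument underlying Theorem~\ref{th:Tak} from Takens' explicit examples to an arbitrary isolated critical point, carving out a ball-shaped region by adjusting the Riemannian metric near $x_0$ rather than the function itself. Both routes demand substantially new input, and this is precisely where the main difficulty of the conjecture lies.
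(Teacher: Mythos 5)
The statement you are addressing is a \emph{conjecture}: the paper offers no proof of it, only partial evidence (Theorem~\ref{th:1} for cone-like points, Corollaries~\ref{c:reas} and~\ref{cor_R}, and the Takens examples of Theorem~\ref{th:Tak}). Your reduction of the global equality to the local statement is correct and matches the paper's own logic: if every isolated critical point admits a cylindrical ball neighborhood then $\Crit(M)=\Critb(M)$, and Theorem~\ref{Susan} together with $\Bcat^{\T}(M)=\Bcat(M)$ in dimensions $\ne 4,5$ gives $\Bcat(M)+1=\Crit(M)$. But the local statement (Conjecture~\ref{c:main_cyl}) is the entire content of the conjecture, and your proposal does not prove it --- as you acknowledge in your final sentence. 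So what you have written is a research plan with an explicit gap, not a proof.

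Two specific points in your sketch would fail as stated. First, the contractibility of $U$ is not established: the radial contraction of $L=U\cap f^{-1}(c)$ to $x_0$ ``inside the isolating chart'' moves points off the level set, so it does not show $L$ (or $U$) is contractible. Knowing that $L$ deformation retracts to $x_0$ within itself is essentially the cone-like hypothesis, which is exactly what is being dropped; the Takens construction produces a level set whose deleted neighborhood of the critical point is \emph{not} a product with a ray, so no such naive retraction exists. Second, the perturbation strategy via Corollary~\ref{c:t17} conflates infinite codimension in jet space with the existence of a one-parameter family $f_t$ that (a) keeps the critical point isolated, (b) renders it cone-like for $t>0$, and (c) preserves the homeomorphism type of the cylindrical neighborhood. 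The Takens germs are flat (Proposition~\ref{p:23Tak}), so their $\infty$-jet is zero and cannot be connected to a cone-like germ through jets in any way that controls the level-set topology; moreover level sets are wildly unstable under $C^0$-small perturbations, so the required ``continuity-of-type'' statement is false in general. If you want to engage with the conjecture, the productive direction suggested by the paper is the one in Proposition~\ref{prop:21}: prove directly, for a general isolated critical point, that the saturation of a suitable compact set by gradient trajectories is a manifold homeomorphic to a ball, without passing through the cone structure on $f^{-1}(c)$.
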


Conjecture~\ref{c:6} is closely related to the Funar Conjecture \cite{Fu21}. Namely,  the minimal number of critical points of smooth maps $M\to N$ with only cone-like critical points is denoted by $\varphi_c(M, N)$, while the minimal number of critical points of functions $M\to N$ is denoted by $\varphi(M, N)$. Funar conjectured \cite{Fu21} that $\varphi(M, N)=\varphi_c(M,N)$, and proved the conjecture under the assumption  $\dim M\le 2\dim N-1$.

\iffalse 
It is known that if $f$ has a finitely determined map germ at $x_0$, then $x_0$ admits a cone neighborhood in $f^{-1}(f(x_0))$. On the other hand, the set of jets of map germs that are not finitely determined is a proalgebraic subset of infinite codimension in the space of all jets. In other words, unless $f$ is infinitely degenerate at $x_0$, the point $x_0$ admits a cone neighborhood. Takens constructed an example of a function $f$ with a unique critical point $x_0$ at which all derivatives of $f$ vanish. The critical point $x_0$ constructed by Takens does not admit a cone neighborhood in $f^{-1}(f(x_0))$ but it still admits a cylindrical ball neighborhood \cite{ST}. We conjecture that all critical points admit cylindrical ball neighborhood, and therefore $\Bcat(M)+1=\Crit(M)$ when $\dim M\ge 6$. This conjecture is closely related to the Funar Conjecture \cite{Fu21}. Namely, let $f$ be a smooth map $M\to N$ with finitely many singular points. A singular point is \emph{cone-like} if it admits a cone neighborhood in $f^{-1}(f(x_0))$. The minimal number of critical points of functions $M\to N$ with only cone-like critical points is denoted by $\varphi_c(M, N)$, while the minimal number of critical points of functions $M\to N$ is denoted by $\varphi(M, N)$. Funar conjectured \cite{Fu21} that $\varphi(M, N)=\varphi_c(M,N)$, and proved the conjecture under the assumption $\dim M\le 2\dim N-1$. 

\fi

\subsection*{Organization of the paper}
In section~\ref{s:pt1} we give a detailed proof of Theorem~\ref{th:1}. Next, in section~\ref{s:3C4} we prove Corollary~\ref{cor_R}, and show that every cone-like critical point admits a cone neighborhood pair. In section~\ref{s:4alg} we review relevant theorems from singularity theory and deduce that the set of jets of map germs with non cone-like critical points is a subset of a proalgebraic set of infinite codimension in the space of all jets. We also show that critical points of infinitely determined map germs are cone-like. Finally, in section~\ref{s:5Tak}, we list properties of Takens critical points, namely: the Takens function germs are flat (Proposition~\ref{p:23Tak}), the critical points of Takens functions admit cylindrical neighborhoods (Proposition~\ref{prop:21}) diffeomorphic to a ball, and the critical points of Takens functions are removable (Corollary~\ref{c:Tak26}). The proof of Theorem~\ref{th:Tak} can also be found in section~\ref{s:5Tak}.

In section~\ref{s:stf2} we review Singhof-Takens theory and introduce relative fillings. 
 In section~\ref{s:ls5} we establish the lower bound for $\Crit^\bullet(M)$ of Theorem~\ref{Susan}, see Theorem~\ref{th:13lower}. 
The upper bound for $\Crit^\bullet(M)$ of Theorem~\ref{Susan} is proved in section~\ref{s:5}, see Theorem~\ref{th:18upper}, by an argument that relies on the Takens theorem (see Theorem~\ref{th:13}).

\vskip 3mm
We are grateful to Louis Funar for valuable comments, and references.

\section{Proof of Theorem~\ref{th:1}}\label{s:pt1}
 
 Let $M$ be a smooth, compact or open, connected manifold, and let $f:M\to \R$ be a smooth proper function with at most finitely many critical points. If $M$ is compact with non-empty boundary, then we assume that $f$ is regular over $\partial M$, and $f^{-1}(\partial I)=\partial M$, where $I$ is the segment $f(M)$.
An arbitrarily chosen Riemannian metric on $M$  defines a gradient flow $\delta_t$ on $M$ such that the trajectory $t\mapsto \delta_t(y)$ of any point $y$ is a curve in $M$ parametrized by an open, half-open, or closed interval. In particular, the trajectory of a critical point $x\in M$ is a constant curve $t\mapsto x$ parametrized by $t\in (-\infty, \infty)$. If a trajectory $t\mapsto \delta_t(y)$ is not closed in $M$, then the limit of $\delta_t(y)$ as $t\to \infty$ or $t\to -\infty$ is a critical point of $f$. 
  For a critical point $x$ of $f$, let $D(x)$ denote the closed subset of points $y$ in $M$ such that the limit of the trajectory $t\mapsto \delta_t(y)$ as $t\to \infty$ or $t\to -\infty$ is the critical point $x$.

Given a set $I\subset \R$, we write $M_I$ for the subset in $M$ of points $x$ such that $f(x)\in I$. To simplify notation, we write $M_a$ for $M_{\{a\}}$. Similarly, the intersection of $D(x)$ with $M_a$ is denoted by $D_a(x)$. 

{\color{red} 
\iffalse 
For a pair $(a,b)$ of real numbers such that  $a <b$, the gradient flow $\delta_t$ defines a diffeomorphism 
\[
h_{a,b}\co M_a\setminus D_a\to M_b\setminus D_b
\]
 by associating to a point $x$ a unique point in $M_b\setminus D_b$ on the trajectory $t\mapsto \delta_t(x)$ of $x$. We will also write $h_{b, a}$ for the inverse of $h_{a,b}$. 
 \fi}
 
 Let $(a, b)$ be a pair of regular values of $f$ such that $a<b$. Then the gradient flow $\delta_t$ defines a diffeomorphism 
\[
h_{a,b}\co M_a\setminus (\cup D_a(x))\to M_b\setminus (\cup D_b(x))
\]
 by associating to a point $y$ a unique point in $M_b\setminus (\cup D_b(x))$ on the trajectory $t\mapsto \delta_t(y)$ of $y$. Here the unions $\cup D_a(x)$ and $\cup D_b(x)$ are taken over the critical points $x$ satisfying $a<f(x)<b$. 
  We will also write $h_{b, a}$ for the inverse of $h_{a,b}$.

We say that a critical point of $f$ is \emph{isolated} if it has a neighborhood that contains no other critical points of $f$. Such a neighborhood is called \emph{isolating}.
Following Seifert and Threlfall \cite[p. 38]{ST38}, we say that 
an isolating compact neighborhood $H$ of a critical point $x_0$ with $f(x_0)=a$ is \emph{cylindrical} if 
$H$ is disjoint from $\partial M$, 
and $H$ consists of trajectories of the flow $\delta_t$ in $M_{[a-\varepsilon, a+\varepsilon]}$. We note that each trajectory in $H$ either joins a point in $M_{a-\varepsilon}$ with a point in $M_{a+\varepsilon}$ or belongs to $D(x_0)\cap M_{[a-\varepsilon, a+\varepsilon]}$. Indeed, if $M$ is open,
this follows from the compactness of $H$, while if $M$ is compact, this follows from $H\cap \partial M=\emptyset$. 
We will always assume that a cylindrical neighborhood $H$ is a smooth manifold of dimension $\dim M$ with corners, and each of $H\cap M_{a-\varepsilon}$ and $H\cap M_{a+\varepsilon}$ is a  smooth manifold with (smooth) boundary.

\begin{remark}
In the theory of maps with isolated cone-like critical points to manifolds of arbitrary dimension adapted neighborhoods \cite{Fu21} play an important role. In general the definition of adapted neighborhoods involves a list of assumptions, but in the case of a map to $\R$ it is equivalent to the following one. An \emph{adapted} neighborhood of a cone-like isolated critical point $x$ of a continuous function $f$ is an isolating connected compact manifold neighborhood $U$ of $x$ such that the level set $f^{-1}(a)$ is transverse to $\partial U$ for each $a$ in the interior of $f(U)$. Clearly, every cylindrical neighborhood is an adapted neighborhood. 
\end{remark}

 We note that the boundary $\partial H$ of a cylindrical neighborhood $H$ is the union of the subsets $\partial_hH$ and $\partial_vH$, where the \emph{horizontal part} $\partial_hH$ of the boundary  is a disjoint union of smooth compact manifolds $H\cap M_{a-\varepsilon}$ and $H\cap M_{a+\varepsilon}$, while the \emph{vertical part}  $\partial_vH$ of the boundary  $\partial H$ is a manifold diffeomorphic to 
 \[
  \partial(\partial_hH\cap f^{-1}(a+\varepsilon))\times [a-\varepsilon, a+\varepsilon].
 \]
The subset $\partial (\partial_hH)=\partial(\partial_vH)$ of $\partial H$ is the set of corners.

Suppose now that $x_0$ is a cone-like singular point of a smooth function $f$. 
 Since $x_0$ is isolated, we may assume that $f(x_0)=0$, and $x_0$ is the only critical point on the singular hypersurface $M_0$. Similarly, we may assume that there are regular values $c$ and $-c$ of $f$ such that $0$ is a unique critical value in an open interval containing $[-c, c]$. 

 We choose a compact cone neighborhood $V$ of $x_0$ in $M_0$. 
Let $H$ denote the subset of $M_{[-c, c]}$ of points $x$ such that either the gradient curve through $x$ intersects $V$ or one of the limits of the gradient curve through $x$ is $x_0$.  By choosing $V$ and $c$ sufficiently small, we may assume that $H$ is an isolating  neighborhood of $x_0$. It immediately follows that the set $H$ is closed in $M$. We write $H_a$ for the intersection of $H$ and $M_a$.

\begin{lemma}
The subsets $H_c\subset M_c$ and $H_{-c}\subset M_{-c}$ are smooth submanifolds with (smooth) boundary. 
The subset $H\subset M$ is a smooth submanifold of codimension $0$ with corners $\partial H_{-c}\sqcup \partial H_{c}$. Furthermore, the subset $H$ of $M$ is a cylindrical neighborhood of $x_0$. 
\end{lemma}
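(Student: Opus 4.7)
The plan is to reduce the smoothness claims to properties of the gradient flow at regular points, with the delicate behavior at the critical point $x_0$ controlled by the cone neighborhood pair. First, I would invoke Proposition~\ref{prop:cone16} to assume without loss of generality that $V=B\cap M_0$ for a cone neighborhood pair $(B,V)$ in which $B$ is a smooth closed ball in $M$ whose boundary is transverse to $M_0$. The subset $K:=\partial V=\partial B\cap M_0$ is then a compact smooth codimension one submanifold of $M_0\setminus\{x_0\}$ consisting only of regular points of $f$.

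Next, I would construct the vertical part of $\partial H$ from the flow. Because every $y\in K$ is a regular point, the implicit function theorem produces a smooth function $\tau\co K\times[-c,c]\to\R$ with $f(\delta_{\tau(y,s)}(y))=s$, and hence a smooth embedding $\Psi\co K\times[-c,c]\to M_{[-c,c]}$ given by $\Psi(y,s):=\delta_{\tau(y,s)}(y)$. Its image $T$ is a smooth compact codimension one submanifold of $M$, transverse to every level set $M_s$, with boundary $\Psi(K\times\{-c,c\})$ lying in $M_{-c}\sqcup M_c$.

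I would then identify $H_s=H\cap M_s$ level by level. Unwinding the definition of $H$ using that $x_0$ is the only critical point in $M_{[-c,c]}$ yields $H_0=V$ and $H_s=h_{0,s}(V\setminus\{x_0\})\cup D_s(x_0)$ for $s\ne 0$, together with $T\cap M_s=h_{0,s}(K)$. Since $h_{0,s}\co M_0\setminus\{x_0\}\to M_s\setminus D_s(x_0)$ is a diffeomorphism, the set $H_s\setminus D_s(x_0)$ is locally a half-space of $M_s$ along $T\cap M_s$. The main obstacle is to show that every $y\in D_s(x_0)$ lies in the interior of $H_s$ in $M_s$. Here I would use that the backward (or forward) orbit of $y$ is asymptotic to $x_0$: for $y'\in M_s$ close to $y$, the orbit of $y'$ shadows that of $y$ arbitrarily deep into any preassigned neighborhood of $x_0$ in $M$, so the crossing point $h_{s,0}(y')$ is forced into any preassigned neighborhood of $x_0$ in $M_0$, and in particular into $V$. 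This provides an open neighborhood of $y$ in $H_s$ and identifies $H_s$ as a smooth submanifold of $M_s$ with smooth boundary $T\cap M_s$.

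Finally, I would assemble the level-wise data into the global structure of $H$. The three codimension one pieces $T$, $H_c$, and $H_{-c}$ meet along $\Psi(K\times\{-c,c\})=\partial H_{-c}\sqcup\partial H_c$; combining $\Psi$ with a tubular neighborhood of $K$ in $M_0$ supplies an orthant chart at each corner point, while the flow itself gives open and half-space charts elsewhere. This realizes $H$ as a smooth codimension zero submanifold of $M$ with corners exactly $\partial H_{-c}\sqcup\partial H_c$. Cylindricality is then immediate from the construction: $H$ is a compact isolating neighborhood of $x_0$ contained in $M_{[-c,c]}$, made entirely of gradient trajectories of $f$, with top and bottom the smooth manifolds $H_{\pm c}$.
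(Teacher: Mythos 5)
Your overall strategy is the same as the paper's: sweep $\partial V$ along the gradient flow to build the vertical wall, use the diffeomorphisms $h_{0,s}$ to get the manifold-with-corners structure away from $D(x_0)$, and then show that points of $D(x_0)$ are interior points of $H$. The first two parts are fine. But your justification of the last part --- the only genuinely delicate step of the lemma --- has a gap. You argue that for $y'$ near $y\in D_s(x_0)$ the orbit of $y'$ shadows the orbit of $y$ into any preassigned neighborhood $W$ of $x_0$ in $M$, ``so'' its crossing point with $M_0$ lies in any preassigned neighborhood of $x_0$ in $M_0$. That inference does not follow. Shadowing controls the orbit of $y'$ only on a finite time interval; after that the orbit sits in $W\cap\{f<0\}$ near a possibly very degenerate critical point, where $\nabla f$ can be arbitrarily small, so the orbit may travel an arbitrarily long distance (and leave $W$) while gaining the remaining tiny amount of $f$-value before it finally meets $M_0$. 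Hence the crossing point need not be near $x_0$, and ruling this out is precisely the content of the step. The paper does it by contradiction: if there were $y_i\to y$ whose crossing points $z_i$ stayed in the compact set $M_0\setminus \Int V$, the $z_i$ would accumulate at a point $z\ne x_0$, which is a regular point of $f$; continuity of the flow at the regular arc through $z$ then forces the trajectory through $z$ to meet $M_s$ at $\lim y_i=y$, contradicting $y\in D(x_0)$. You need this (or an equivalent backward-flow/compactness) argument; forward shadowing alone is not enough.

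A smaller point: invoking Proposition~\ref{prop:cone16} to put $V$ in the form $B\cap M_0$ is both unnecessary and circular --- that proposition is established later by means of the very construction this lemma initiates. All your argument actually uses is that $\partial V$ is a smooth compact codimension-one submanifold of $M_0\setminus\{x_0\}$ consisting of regular points, which is already part of the choice of $V$ in the setup.
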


\begin{proof} Let $x$ be a point in $H\setminus D(x_0)$. Suppose that $f(x)\in (-c, c)$. We will omit the cases $f(x)=c$ and $f(x)=-c$ as the argument in these cases  is similar. 
Without loss of generality we may assume that $f(x)\le 0$ as the case $f(x)\ge 0$ is similar.  Let $\delta$ denote the gradient flow of $f$. Then for some $t\ge 0$, the point $\delta_{t}(x)$ is  in $V$. 

Suppose that $\delta_t(x)\in \partial V$. Then for a small neighborhood $U$ of $x$ in $M$, there is a smooth map $\varphi\co U\to M_0$  that takes $y\in U$ to a unique point $\varphi(y)\in M_0$ on the trajectory of $y$. In view of the diffeomorphism $h_{f(x), 0}$,  the map $\varphi$ is a submersion onto a small neighborhood of $\delta_t(x)$. Since $V\setminus \{x_0\}$ is a manifold with smooth boundary, we conclude that
 $x$ admits a half disc neighborhood in $H$. 

In the case where $\delta_t(x)$ is a point in the interior of $V\setminus \{x_0\}$, a similar argument shows that $x_0$ admits an open disc neighborhood in $H$.

Suppose now that $x\in D(x_0)\cap H$. We still assume that $f(x)\le 0$. We need to show that for every point $y$ sufficiently close to $x$, either $y\in D(x_0)$ or the trajectory of the gradient flow of $f$ through $y$  intersects $V$, and consequently,  the point $x$ possesses an open disc neighborhood in $H$. 
Assume to the contrary that 
there is a sequence of points $y_i$ approaching $x$ such that each trajectory $t\to \delta_t(y_i)$ passes through a point $z_i$ in $M_0\setminus V$ at some moment $t_i$. We may assume $f(y_i)=f(x)$. 
Since $M_0$ is compact, there is an accumulating point $z$ of the points $z_i$ in $M_0$. 
In fact, by dropping some of the points $y_i$ (and $z_i$), we may assume that $z$ is a limit point of $z_i$.  Since $V$ is a compact neighborhood of the critical point $x_0$, the limit $z$ of points $z_i\in M_0\setminus V$ is distinct from $x_0$. 

We claim that the trajectory of the gradient flow of $f$ through the point $z$ passes through the point $x$, which contradicts the assumption that the trajectory of the gradient flow through $x$ has a limit point at $x_0$.  To prove the claim, let $y$ denote the point on the trajectory $t\mapsto \delta_{-t}(z)$ such that $f(y)=f(x)$. Then the trajectories through the points $z_i$ sufficiently close to $z$ intersect the level set $f^{-1}(f(y))$ at points close to $y$, i.e., the points $y_i$ approach the point $y$. Thus, $y$ coincides with $x$.

Finally, the maps $h_{-c,0}$ and $h_{0,c}$ define diffeomorphisms $\partial H_{-c}\to \partial V$ and $V\to \partial H_{c}$, and therefore, the subsets $H_c$ and $H_{-c}$ of smooth manifolds $M_c$ and $M_{-c}$ are smooth submanifolds with smooth boundary. \end{proof}

\begin{proposition}\label{prop:6} The manifold $H$ is a cone over $\partial H$ with vertex at $x_0$. 
\end{proposition}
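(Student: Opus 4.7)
The strategy is to construct an explicit homeomorphism $\Phi\co C(\partial H)\to H$ sending the cone point to $x_0$, by combining the cone structure on $V$ with the gradient flow. Let $r\co \partial V\times [0,1]\to V$ denote the cone parametrization of $V$, so that $r(\sigma,0)=x_0$ and $r(\sigma,1)=\sigma$. For each $(\sigma,\tau,s)\in \partial V\times (0,1]\times [-c,c]$, denote by $\Psi(\sigma,\tau,s)\in H$ the unique point on the gradient trajectory through $r(\sigma,\tau)\in V\setminus\{x_0\}$ at $f$-value $s$. This $\Psi$ yields a smooth embedding with image $H\setminus D(x_0)$, which I would use as coordinates on the regular part of $H$.

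The plan is to define a contracting family of maps $R_\lambda\co H\to H$ for $\lambda\in [0,1]$ by scaling the parameters toward the cone vertex. For $y=\Psi(\sigma,\tau,s)\in H\setminus D(x_0)$, set
\[
R_\lambda(y)=\Psi(\sigma,\lambda\tau,\lambda s),
\]
and for $y\in D(x_0)\cap H$ on a trajectory limiting to $x_0$ set $R_\lambda(y)$ equal to the point on this trajectory at $f$-value $\lambda f(y)$, with $R_0(y)=x_0$. Then $\Phi(y,\lambda):=R_\lambda(y)$ for $y\in\partial H$, $\lambda\in[0,1]$, satisfies $\Phi(y,0)=x_0$ and $\Phi(y,1)=y$, so $\Phi$ factors through the cone quotient to give a continuous map $\overline\Phi\co C(\partial H)\to H$.

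To confirm that $\overline\Phi$ is a homeomorphism, I would verify bijectivity of $\Phi$ on $\partial H\times(0,1]$ using the observation that each $y\in\partial H\setminus D(x_0)$ satisfies either $\tau(y)=1$ (when $y\in\partial_v H$) or $|f(y)|=c$ (when $y\in H_{\pm c}$). Injectivity follows by solving the system $\sigma(y_1)=\sigma(y_2)$, $\lambda_1\tau(y_1)=\lambda_2\tau(y_2)$, $\lambda_1 f(y_1)=\lambda_2 f(y_2)$ under this constraint: whichever coordinate is extremal propagates equality to the other. For surjectivity, given $z\in H\setminus\{x_0\}$ with parameters $(\sigma,\tau,s)$, take $\lambda=\tau$ and $y\in\partial_v H$ with $f(y)=s/\tau$ when $|s|/\tau\le c$, and $\lambda=|s|/c$ with $y\in H_{\pm c}$ and $\tau(y)=c\tau/|s|$ when $|s|/\tau\ge c$; points in $D(x_0)\cap H$ are reached directly via the trajectory through $z$ with $y$ in its intersection with $\partial H$. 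Since $C(\partial H)$ is compact and $H$ is Hausdorff, the continuous bijection $\overline\Phi$ is automatically a homeomorphism.

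The principal obstacle is establishing continuity of $R_\lambda$ at points where $y$ approaches $D(x_0)\cap H$ from $H\setminus D(x_0)$: for sequences $y_n=\Psi(\sigma_n,\tau_n,s_n)\to y^*\in D(x_0)\cap H$ with $\tau_n\to 0$, one must show $R_\lambda(y_n)=\Psi(\sigma_n,\lambda\tau_n,\lambda s_n)\to R_\lambda(y^*)$, and similarly one must verify $R_\lambda(y)\to x_0$ as $\lambda\to 0$. Both reduce to extending $\Psi$ continuously to the locus $\tau=0$: the extension $\overline\Psi(\sigma,0,s)$ should equal the point at level $s$ on the ``limit trajectory'' determined by the approach direction $\sigma\in\partial V$, lying in $D(x_0)\cap M_s$ for $s\ne 0$ and equal to $x_0$ for $s=0$. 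The existence and continuity of this extension invoke the compactness of $H$, the continuous dependence of the gradient flow on initial conditions over bounded time intervals, and the cone-like structure of $x_0$, which ensures that the direction $\sigma$ uniquely determines the asymptotic behavior of trajectories emanating from near $x_0$.
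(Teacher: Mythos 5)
Your setup---the coordinates $\Psi(\sigma,\tau,s)$ on $H\setminus D(x_0)$, the contraction $R_\lambda$, and the bijectivity bookkeeping on $\partial H\times(0,1]$---is sound, and it is a genuinely different route from the paper's. But the step you yourself flag as the ``principal obstacle'' is a genuine gap, and the ingredients you invoke do not close it. First, the existence of the extension $\overline\Psi(\sigma,0,s)=\lim_{\tau\to 0}\Psi(\sigma,\tau,s)$ is a nontrivial dynamical statement about gradient trajectories emanating from points of $V$ approaching a (possibly infinitely degenerate) critical point. It is not a consequence of $x_0$ being cone-like: cone-likeness is a topological statement about $V=f^{-1}(0)\cap H$ and says nothing about the asymptotics of $\nabla f$ on $D(x_0)$. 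Nor does continuous dependence on initial conditions help as stated, because the flow time from $r(\sigma,\tau)$ up to level $s$ diverges as $\tau\to 0$; the bounded-time version only controls $\lim_n\Psi(\sigma_n,\tau_n,s')$ once you already know $\lim_n\Psi(\sigma_n,\tau_n,s)$ exists for one level $s$. Second, even granting the extension, your formula puts $R_\lambda(y)$ on a \emph{different} trajectory from $y$: if $y_n=\Psi(\sigma_n,\tau_n,c)\to y^*\in D_c(x_0)$, then $R_\lambda(y_n)=\Psi(\sigma_n,\lambda\tau_n,\lambda c)$ lies on the trajectory through $r(\sigma_n,\lambda\tau_n)$, not through $r(\sigma_n,\tau_n)$. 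Continuity at $(y^*,\lambda)$ therefore requires that the level-$\lambda c$ points of these two families of trajectories have the same limit --- a coherence of $\lim_{\tau\to0}\Psi$ across rescalings of $\tau$ that one can verify in the Morse model (where $D_s(x_0)$ is a sphere and the flow is linear) but for which there is no evident reason at a degenerate critical point, where $D_s(x_0)$ can be a large set and trajectories launched from arbitrarily close to $x_0$ in $M_0$ can land far apart at level $s$. A similar uniform smallness claim is needed for continuity at the cone vertex ($R_\lambda(\partial H)\to\{x_0\}$ as $\lambda\to 0$), which you do not address.

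For contrast, the paper avoids taking limits of gradient trajectories altogether. It pushes the radial field of the cone $V$ horizontally along the gradient flow to get a field $v$ on $H\setminus D(x_0)$, \emph{declares} $v=0$ on $D(x_0)$, adds $w=f\cdot\nabla f$, checks that $v+w$ is continuous, nowhere zero on $H\setminus\{x_0\}$, and outward normal along $\partial H$ after smoothing the corners, and then obtains $H\setminus\{x_0\}\cong\partial H\times[0,\infty)$ from the flow of a damped multiple of $-(v+w)$. There the only delicate point is a norm estimate ($\|v(x)\|\to 0$ as $x\to D(x_0)$), which is much weaker than the $C^0$-convergence of entire flow lines that your construction requires. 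If you want to salvage your approach, you would need to replace $R_\lambda$ by the time-$\lambda$ map of a single globally defined contracting flow on $H$ --- which is essentially what the paper's vector field $-(v+w)$ provides.
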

\begin{proof}   Since $V$ is a cone over its boundary with vertex at $x_0$, it is a union of $x_0$ and a smooth manifold diffeomorphic to $\partial V\times [0,1]$. Let $v'$ denote a vector field on $\partial V\times [0,1]$ in the direction along the second component. Let $\lambda$ denote a smooth monotonic function on $[0,1]$ that is $0$ only at $0$ and $1$ at $1$. Let $\pi_{[0,1]}$ denote the projection $\partial V\times [0,1]\to [0,1]$. Then $(\lambda\circ \pi_{[0,1]})\cdot v'$ extends to a vector field $v$ on $V$. 

The vector field $v$ restricts to a tangent radial vector field over the manifold $V\setminus \{x_0\}$, it is trivial only at $0$,  and  any point in $V\setminus \{x_0\}$ escapes $V$ along $v$ in finite time. 

We will now use the gradient vector field $\nabla f$  to extend the vector field $v$ over $H$, see Lemma~\ref{l:11}.  For any regular point $x\in H$ of $f$, the tangent space $T_xM$ is the direct sum of the \emph{horizontal} component $T_xM_{f(x)}$ and the \emph{vertical} component generated by $\nabla f$. We say that a vector field $u$ over $H\setminus \{x_0\}$ is \emph{horizontal} if the vector $u(x)$ is in $T_xM_{f(x)}$ for all $x\in H\setminus \{x_0\}$.

\begin{lemma}\label{l:11} The vector field $v$ extends to a continuous vector field over $H$ such that $v|H\setminus \{x_0\}$ is a nowhere zero horizontal vector field.
\end{lemma}
\begin{proof} For any point $x\in H\setminus D(x_0)$ with $f(x)<0$,  
there is a unique value $t>0$ such that $\delta_t(x)$ is a point $y$ in $V$, where $\delta_t$ is the gradient flow of $f$. We define $v(x)$ by 
\[
v(x)=\pi\circ (d\delta_t)^{-1}(v(y)),
\]
 where the map $\pi$ projects the tangent space $T_yM$ at $y\in M_{[-c, c]}\setminus x_0$ to the tangent space $T_yM_{f(y)}$ of the level manifold. 
 
 We claim that $v(x)$ is non-zero. Since $\delta_t$ takes gradient curves of $f$ to gradient curves of $f$,  it takes $\nabla f(y)$  to   $\nabla f(x)$, and therefore it takes $v(y)$ to a vector linearly independent from $\nabla f(x)$, i.e., to a vector that is not in the kernel of  $\pi$. 
 
 Similarly, for $x$ in $H\setminus D(x_0)$ with $f(x)>0$ we define 
 \[
 v(x)=(\pi\circ d\delta_t)(v(y)), 
 \]
 where $x=\delta_t(y)$ for some $t>0$ and $y\in V$. Finally, we set $v(x)=0$ for 
all $x\in D(x_0)$. Then $v$ is a desired continuous vector field over $H$. 
\end{proof}

 Let $w$ denote $f\cdot \nabla f$.  Then $w(x)$ vanishes precisely over the set $M_0$ as well as over the critical points of $f$.

\begin{lemma} The vector field $v+w$ is a continuous vector field on $H$ which is nowhere zero on $H\setminus \{x_0\}$. 
\end{lemma}
\begin{proof}
Since $w$ and $v$ are continuous vector fields, we deduce that $w+v$ is a continuous vector field over $H$. It is easily verified that $w+v$ is trivial only at $x_0$. 
\end{proof}
\iffalse
\sout{We claim that $w+v$ is trivial only at $x_0$. Indeed, assume to the contrary that $x\in H$ is a regular point of $f$ with $w(x)=-v(x)$. Over $V\setminus \{x_0\}$ we have $v(x)\ne 0$ and $w(x)=0$. Over $D(x_0)\cap (H\setminus x_0)$ we have $v(x)=0$ and $w(x)\ne 0$. Suppose that $x$ is not on $D(x_0)\cup V$. We may assume that $f(x)<0$ as the case $f(x)>0$ is similar. Then for some $t>0$ the point $y=\delta_t(x)$ is in $V\setminus \{x_0\}$. The diffeomorphism $\delta_t$ takes the curve $t\mapsto \delta_t(x)$ to itself, it takes the point $x$ to $y$,  and it takes the vector $v(y)$ normal to the curve to a vector not tangent to the curve $t\mapsto \delta_t(x)$. On the other hand, the vector 
$w(x)$ is tangent to the curve $t\mapsto \delta_t(x)$. Thus $v(x)+w(x)\ne 0$.}}
\end{proof}
\fi

\begin{lemma} The corners of $H$ can be smoothened so that $v+w$ is still defined on the modified manifold $H$ and $v+w$ is nowhere tangent to $\partial H$. 
\end{lemma}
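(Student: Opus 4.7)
The plan is to work in a collar of each corner submanifold $\partial H_{\pm c}$, show that the vector field $v+w$ has strictly positive components in both outward normal directions at every corner point, and then perform a standard inward rounding of the corner. Because the outward normal to the rounded boundary will lie in the closed positive cone spanned by the two original outward normals, strict positivity of both components of $v+w$ forces transversality of $v+w$ to the new boundary.

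Near a point $p$ of $\partial H_c$ I would choose local coordinates $(a,b,\xi)$ on $M$ in which $H=\{a\leq 0,\ b\leq 0\}$, $\partial_h H\subset\{a=0\}$, $\partial_v H\subset\{b=0\}$, and $\partial/\partial a$ is a positive multiple of $\nabla f$ along $\{a=0\}$. At the corner locus $\{a=b=0\}$, the field $w=c\nabla f$ is a positive multiple of $\partial/\partial a$, so its $a$-component is strictly positive and its $b$-component vanishes (since $w$ is tangent to the gradient trajectories forming $\partial_v H$). The horizontal field $v$ has vanishing $a$-component, and at $p$ it equals $dh_{0,c}(v(q))$ for the point $q\in\partial V$ with $h_{0,c}(q)=p$; this identification follows from Lemma~\ref{l:11}, because $dh_{0,c}(v(q))\in T_p M_c$ is already horizontal and so agrees with its own horizontal projection. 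Since $v(q)$ points out of $V$ in $M_0$ and $h_{0,c}\colon V\to H_c$ is a diffeomorphism, $v(p)$ points out of $H_c$ inside $M_c$, hence has strictly positive $b$-component. Therefore both the $a$- and $b$-components of $v+w$ are strictly positive at $p$ and, by continuity, throughout a neighborhood of $p$. The same analysis applies at $\partial H_{-c}$ with $w=-c\nabla f$, which again points outward across $H_{-c}$ because $f\geq -c$ on $H$.

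To round the corner I would replace $\{a\leq 0,\ b\leq 0\}$ by the sublevel set $\{\rho(a,b)\leq 0\}$ of a smooth function $\rho$ with $\partial_a\rho\geq 0$, $\partial_b\rho\geq 0$, $\rho=a$ outside a small neighborhood of $\{b=0\}$, and $\rho=b$ outside a small neighborhood of $\{a=0\}$. The outward conormal to the smoothed boundary is a positive multiple of $\nabla\rho$, lies in the closed first $(a,b)$-quadrant, and coincides with the old outward normals off the rounding region. Because the $a$- and $b$-components of $v+w$ are strictly positive throughout the rounding region, $(v+w)\cdot\nabla\rho>0$ there, giving transversality. Performing this rounding fiberwise in a tubular neighborhood of each compact corner submanifold $\partial H_{\pm c}$ produces a smoothing of $H$ contained inside the old $H$, so $v+w$ is automatically defined on it, is nonzero off $x_0$, and is nowhere tangent to the new $\partial H$.

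The main subtlety is the sign of the $b$-component of $v$ at the corner, which is the only step where the explicit construction from Lemma~\ref{l:11} enters; everything else is the routine observation that transversality of a continuous vector field to a boundary is an open condition preserved by sufficiently small corner straightenings.
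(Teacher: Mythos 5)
Your proof is correct and follows essentially the same route as the paper: model the corner as a quarter-space bounded by a horizontal face (where $w$ points strictly outward and $v$ is tangent) and a vertical face swept out by gradient trajectories (where $v$ points strictly outward and $w$ is tangent), then round the corner inward so that the new outward conormal lies in the positive cone of the two old ones. Your version is in fact slightly cleaner on one point: the paper allows itself to ``slightly modify'' $v+w$ near the smoothed corner, whereas your strict-positivity-of-both-components argument shows the unmodified field is already transverse to the rounded boundary, which matches the lemma's wording that $v+w$ is ``still defined'' on the modified $H$.
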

\begin{proof}
The vector field $v+w$ is continuous over $H$, and it is outward normal over $\partial H\cap M_{\pm c}$ and $\partial H\setminus M_{\pm c}$.  Near a corner point, $H$ is a quarter subspace of $\R^m$ bounded by the horizontal space $f^{-1}(c_+)$ or $f^{-1}(c_-)$,  and a vertical space composed of gradient flow curves. It follows that the corner of $H$ can be smoothened, and the vector field $v+w$ can be slightly modified near the smoothened corner so that it is outward normal everywhere over $\partial H$, see \cite{Pu02}.  
\end{proof}

To summarize,  we constructed a smooth isolating compact manifold neighborhood $H$ of $x_0$ together with a vector field $v+w$ on $H$ that is nowhere zero on $H\setminus \{x_0\}$ and outward normal over $\partial H$.

\iffalse
\begin{lemma}\label{le:10} For every point $x\in H\setminus \{x_0\}$, its flow curve along $-v-w$ approaches $x_0$. 
\end{lemma}
\begin{proof}
 Along the vector field $-w$ every point of $H$ flows towards $V$, while along the vector field $-v$ the points of $H$ stay on the same level of $f$. Consequently, any point in $H\setminus \{x_0\}$ appears arbitrarily close to $V$ after flowing along $-v-w$ for sufficiently long time. On the other hand, by the definition of $v$ any point on $V$ flows towards $x_0$ along $-v$.  
 \end{proof}
\fi

\begin{lemma}\label{le:cylinder} The manifold $H\setminus \{x_0\}$ is diffeomorphic to $\partial H\times [0, \infty)$. 
\end{lemma}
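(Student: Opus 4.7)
The plan is to exhibit the diffeomorphism via the backward flow of $v+w$. Write $\psi_t$ for the flow of $v+w$ on $H\setminus\{x_0\}$, where this vector field is smooth, nonvanishing, and, thanks to the preceding lemma, outward transverse to $\partial H$. I would take as the candidate map
\[
\Psi\colon \partial H \times [0, \infty)\longrightarrow H\setminus \{x_0\},\qquad \Psi(x, t)=\psi_{-t}(x),
\]
which is well defined provided every backward trajectory from a boundary point exists for all positive time and stays in $H\setminus\{x_0\}$.

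The central ingredient is that $f$ is strictly monotonic along forward trajectories of $v+w$ outside of $M_0$. Since $v$ is horizontal and $w=f\cdot \nabla f$, along any trajectory
\[
\tfrac{d}{dt}\,f(\psi_t(y)) \;=\; df(v+w)(\psi_t(y)) \;=\; f(\psi_t(y))\cdot |\nabla f(\psi_t(y))|^{2},
\]
so $|f|$ strictly increases along $\psi_t$ whenever $f(y)\neq 0$. When $f(y)=0$ with $y\neq x_0$, the trajectory stays inside $V$, where by construction $v$ is the radial cone vector field $(\lambda\circ\pi_{[0,1]})v'$; since $\lambda>0$ on $(0,1]$ with unit speed at the outer boundary $\{s=1\}$, this trajectory reaches $\partial V\subset\partial H$ in finite time.

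I would then argue that every forward trajectory of $v+w$ starting in $H\setminus\{x_0\}$ exits $H$ through $\partial H$ at a unique finite time $T(y)>0$. An orbit that stayed in $H$ for all $t\ge 0$ would, by compactness, possess an $\omega$-accumulation point $z$; but $z$ cannot be a regular interior point (transversality of the flow through nonvanishing vector fields), cannot lie on $\partial H$ (outward transversality of $v+w$ there forces a nearby interior point to have already exited), and cannot be $x_0$ (since $|f|\circ\psi_t$ is strictly increasing and bounded away from $0$ by $|f(y)|>0$). Using $T(y)$, the inverse of $\Psi$ is the map $y\mapsto (\psi_{T(y)}(y), T(y))$. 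Bijectivity of $\Psi$ is then immediate from ODE uniqueness, and smoothness of $\Psi$ and $\Psi^{-1}$ follows from smoothness of $\psi$ on $H\setminus\{x_0\}$ together with smoothness of $T$, which is provided by the implicit function theorem applied to a local defining equation of $\partial H$ (transversality of $v+w$ being the nondegeneracy hypothesis).

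The main obstacle is the finite-time exit step, which requires carefully excluding the three kinds of accumulation above and, in the $f=0$ case, invoking the explicit form of $v$ on the cone $V$ to avoid an asymptotic approach to $\partial V$. The rest is a standard application of the flow-box theorem, the implicit function theorem, and uniqueness for ODEs, and the fact that the degeneracy of $v+w$ at $x_0$ is harmless because $\Psi$ and its inverse are only ever evaluated on $H\setminus\{x_0\}$.
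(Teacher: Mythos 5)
Your overall strategy (flow along $\pm(v+w)$ between $\partial H$ and $x_0$) is the same as the paper's, and your forward exit-time analysis is essentially sound: the identity $\tfrac{d}{dt}f(\psi_t(y))=f\cdot|\nabla f|^2$ together with the lower bound for $|\nabla f|$ on the compact sets $H\cap f^{-1}([c_0,c])$ (which avoid $x_0$) forces every forward orbit with $f(y)\neq 0$ to reach $\partial_h H$ in finite time, and the cone field handles $V$. However, there is a genuine gap at the step you flag but never discharge: you need every backward trajectory $t\mapsto\psi_{-t}(x)$, $x\in\partial H$, to be defined for all $t\in[0,\infty)$ and to remain in $H\setminus\{x_0\}$. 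These trajectories converge to $x_0$, where $v+w$ vanishes and is merely continuous (there is no Lipschitz control at $x_0$: $v$ is built from an arbitrary cone parametrization of $V$ and is only claimed to extend continuously by $0$), so a backward orbit may reach $x_0$ in finite time $t^*(x)$. In that case $\Psi$ is undefined for $t\ge t^*(x)$, and your inverse map $y\mapsto(\psi_{T(y)}(y),T(y))$ lands only in the region $\{(x,t):0\le t<t^*(x)\}$, which is a proper subset of $\partial H\times[0,\infty)$; an extra fiberwise reparametrization is then needed. This is exactly why the paper replaces $-(v+w)$ by $-\mu(v+w)$, with $\mu\equiv 1$ near $\partial H$ and $\mu$ flat at $x_0$, so that all trajectories from $\partial H$ are defined on $[0,\infty)$ and the product structure comes for free.

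Two smaller points. First, your reason for excluding interior $\omega$-accumulation points (``transversality of the flow through nonvanishing vector fields'') is not valid in general -- orbits of nonvanishing fields can be recurrent; the correct argument is the Lyapunov one you already set up: on the $\omega$-limit set $\tfrac{d}{dt}|f|=|f|\,|\nabla f|^2$ must vanish, and since $|f|\ge|f(y)|>0$ there this forces $\nabla f=0$, so the limit set would be $\{x_0\}$, contradicting $f(x_0)=0$. Second, your appeals to ODE uniqueness, to smoothness of $\psi$, and to the implicit function theorem for $T(y)$ presuppose that $v+w$ is smooth on $H\setminus\{x_0\}$, whereas it is only continuous across $D(x_0)$ (the paper is admittedly also loose on this point, but you should not claim a smooth flow without smoothing the field there first).
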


\begin{proof}   Let $\delta^{-v-w}_t$ denote the flow along the vector field $-v-w$.  Suppose that the trajectory $t\mapsto \delta^{-v-w}_t(z)$ is well-defined for $t\in [0, a]$. Since $-v-w$ is inward normal over $\partial H$, the trajectory of any point $z$ does not escape $H$, and therefore $\delta^{-v-w}_a(z)$ is in $H$. Consequently, the curve $t\mapsto \delta^{-v-w}_t(z)$ is well-defined for $t\in [0, a+\varepsilon)$ for some $\varepsilon$. Suppose now that $t\mapsto \delta^{-v-w}_t(z)$ is well-defined for $t\in [0, a)$ and it is not well-defined for $t\in [0,a]$. Since $-v-w$ is nowhere zero, we deduce that the limit of $\delta^{-v-w}_t(z)$ as $t\to a$ is $x_0$.    

Next, we claim that for every point $x\in H\setminus \{x_0\}$, its flow curve along $-v-w$ approaches $x_0$.  Indeed, along the vector field $-w$ every point of $H$ flows towards $V$, while along the vector field $-v$ the points of $H$ stay on the same level of $f$. Consequently, any point in $H\setminus \{x_0\}$ appears arbitrarily close to $V$ after flowing along $-v-w$ for sufficiently long time. On the other hand, by the definition of $v$ any point on $V$ flows towards $x_0$ along $-v$.

Let $\mu>0$ denote a function on $H\setminus \{x_0\}$ such that  $\mu(x)$ and all its derivatives tend to $0$ as $x\to x_0$. We may choose the function $\mu$ so that $\mu\equiv 1$ near $\partial H$, and every trajectory $\delta^{-\mu (v+w)}_t(z)$ of $-\mu (v+w)$ starting from any point $z\in \partial H$ is well-defined for $t\in [0,\infty)$. Then the vector field $-\mu (v+w)$ defines a diffeomorphism $\varphi\co H\setminus \{x_0\}\to \partial H\times [0, \infty)$ by $\varphi(x)=(y, t)$ for a unique point $y\in \partial H$ and a unique value $t\ge 0$ such that  $\delta_t^{-\mu (v+w)}(y)=x$. 
\end{proof}

Thus, for every point $x\in H\setminus \{x_0\}$, the flow curve through $x$ passes through a unique point of $\partial H$, and has $x_0$ as its limit point. This implies that $H$ is a cone over $\partial H$.   
\end{proof}

\begin{corollary} The manifold $H$ is a smooth manifold homeomorphic to a ball. 
\end{corollary}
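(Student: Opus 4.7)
The plan is to derive the corollary from Proposition~\ref{prop:6} and Lemma~\ref{le:cylinder} together with the smooth manifold structure on $H$, which is built into the construction: the final lemma in the proof of Proposition~\ref{prop:6} smoothens the corners, making $H$ a smooth $m$-manifold with boundary $\partial H$. It therefore suffices to prove that $H$ is homeomorphic to a closed $m$-disk, and by Proposition~\ref{prop:6} this reduces to showing $\partial H\cong S^{m-1}$.

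The key observation is that the diffeomorphism $\varphi\co H\setminus\{x_0\}\to \partial H\times [0,\infty)$ from Lemma~\ref{le:cylinder} exhibits $H$ as the one-point compactification of the collar $\partial H\times[0,\infty)$ at its end, with $x_0$ playing the role of the point at infinity. Concretely, the open sub-cones $\{x_0\}\cup \varphi^{-1}(\partial H\times(T,\infty))$, $T>0$, form a neighborhood basis of $x_0$ in $H$. Since $H$ is a topological $m$-manifold and $x_0$ is an interior point, $x_0$ also admits a Euclidean neighborhood $U\cong \R^m$ inside $\Int(H)$, containing a closed topological disk $D$ with $x_0\in \Int(D)$ and $\partial D\cong S^{m-1}$. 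Sandwiching such disks between sub-cones (and vice versa) identifies the local structure of $H$ at $x_0$ with the local structure of the cone on $\partial H$ at its vertex. Consequently, by the classical fact that the cone on a compact space $X$ is a topological $m$-manifold at its vertex only when $X\cong S^{m-1}$, we conclude that $\partial H\cong S^{m-1}$, and therefore $H\cong c(\partial H)\cong c(S^{m-1})\cong D^m$.

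The main obstacle in this plan is the final identification $\partial H\cong S^{m-1}$. I expect this to be handled either by invoking the cited classical fact about cones being manifolds at the vertex (which rests on the generalized Poincar\'e conjecture, applied after checking that $\partial H$ is a simply-connected homology sphere by Alexander duality against $\Int(D)\cong \R^m$) or by using the flow $\delta_t^{-\mu(v+w)}$ from Lemma~\ref{le:cylinder} to construct an explicit isotopy, inside $U\setminus\{x_0\}$, between the sphere $\partial D$ and a level slice $\varphi^{-1}(\partial H\times\{T\})\cong \partial H$ for $T$ chosen so that $\varphi^{-1}(\partial H\times[T,\infty))\cup \{x_0\}\subset \Int(D)$. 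Either route then gives $\partial H\cong S^{m-1}$, and the corollary follows.
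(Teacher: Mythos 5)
Your overall strategy coincides with the paper's: the paper's proof of this corollary is a one-line reference to the argument of Lemma~\ref{Kyle}, which shows that a compact manifold that is a cone over its boundary is a disc by proving that the boundary is a homotopy sphere and invoking the generalized topological Poincar\'e conjecture. Your reduction to showing $\partial H\cong S^{m-1}$, using the cone structure from Proposition~\ref{prop:6} together with the fact that $x_0$ is an interior manifold point, is exactly the right skeleton.

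There is, however, a genuine gap at the crucial step $\partial H\cong S^{m-1}$. The ``classical fact'' you invoke --- that the cone on a compact space $X$ is a topological $m$-manifold at its vertex only when $X\cong S^{m-1}$ --- is false as stated: by the Edwards--Cannon double suspension theorem, $S^5=\Sigma(\Sigma P)=c(\Sigma P)\cup c(\Sigma P)$ for the Poincar\'e homology sphere $P$, so the open cone on $\Sigma P$ is a $5$-manifold at its vertex even though $\Sigma P$ is not $S^4$ (it is not even a manifold). The statement does become true when $X$ is itself a closed manifold, as $\partial H$ is here, but then its proof requires showing that $\partial H$ is simply connected, and this is exactly the point your parenthetical gets wrong: Alexander duality (equivalently, the local homology computation at $x_0$) only shows that $\partial H$ is a \emph{homology} sphere and cannot detect $\pi_1$ --- the Poincar\'e sphere is the standard counterexample. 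The missing argument is the one carried out in the proof of Lemma~\ref{Kyle}: a loop in $\partial H$ bounds a disc in the contractible manifold $H$; since the relevant dimension is at least $3$, the disc can be perturbed off the vertex $x_0$; and the cone projection $H\setminus\{x_0\}\to\partial H$ then carries it to a nullhomotopy in $\partial H$. (Equivalently, one can interleave the deleted cone neighborhoods of $x_0$, which are homotopy equivalent to $\partial H$ with isomorphisms as bonding maps, with deleted Euclidean balls, which are simply connected, to conclude $\pi_1(\partial H)=1$.) Your alternative route, an explicit isotopy between $\partial D$ and a level slice $\varphi^{-1}(\partial H\times\{T\})$, is also unjustified as written: there is no a priori reason that $\partial D$ is a graph over $\partial H$ in the coordinates of Lemma~\ref{le:cylinder}, so the flow does not obviously carry one onto the other.
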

\begin{proof} Since $H$ is a cone over a manifold, and $H$ is itself a manifold, it follows that $H$ is a disc, e.g. see the proof of Lemma~\ref{Kyle}. 
\end{proof}

Since the vector field $-v-w$ is tangent to $V\setminus \{x_0\}$ over $V\setminus \{x_0\}$, it follows that $(H, V)$ is a cone neighborhood pair for the critical points $x_0$. This completes the proof of Theorem~\ref{th:1}.

\section{Proof of Corollary~\ref{cor_R}}\label{s:3C4}

Let $x_0$ be an isolated critical point of a function $f\co M\to \R$ on a smooth manifold of dimension $n$. By restricting the function $f$ to a coordinate neighborhood about the critical point $x_0$, we may assume that $M$ is $\R^n$, and $f$ has no other critical points.
 In \cite{Ro50} E. Rothe studied the class of critical points satisfying the hypothesis $\mathcal{H}$.

\vskip 3mm
\noindent{\bf Hypothesis $\mathcal{H}$.} There exists an isolating neighborhood $U$ of $x_0$ such that for all $x_0\ne 0$ in $U\cap f^{-1}(x_0)$ the vectors $x-x_0$ and $\nabla f(x)$ are linearly independent.  
\vskip 3mm

The hypothesis $\mathcal{H}$ is closely related to the hypothesis $\mathcal{H}'$. 

\vskip 3mm
\noindent {\bf Hypothesis $\mathcal{H}'$.} There exists an isolating compact neighborhood $U$ of $x_0$ such that 
\begin{itemize}
\item $U$ is a submanifold of $M$ with (smooth) boundary $\partial U$, 
\item $\partial U$ is nowhere tangent to $f^{-1}(x_0)$, 
\item over the complement to $x_0$ in $V=U\cap f^{-1}(x_0)$ there is a smooth vector field $v$ which is nowhere zero and which is outward normal over $\partial V$. 
\end{itemize}

\begin{lemma} The hypothesis $\mathcal{H}$ implies the hypothesis $\mathcal{H}'$.
\end{lemma}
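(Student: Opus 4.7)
The plan is to take $U$ to be a sufficiently small closed ball $B = \overline{B}_r(x_0) \subset \R^n$ centered at $x_0$, chosen with $r$ small enough that $B$ lies in the isolating neighborhood $U_0$ supplied by Hypothesis $\mathcal{H}$, and to construct the required vector field on $V \setminus \{x_0\}$ by orthogonally projecting the radial field $x \mapsto x - x_0$ onto $TV$.

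The first bullet of $\mathcal{H}'$ is then clear, since a closed ball is a compact smooth submanifold with smooth boundary. For the second bullet, note first that Hypothesis $\mathcal{H}$ forces $\nabla f(x) \ne 0$ for every $x \in V \setminus \{x_0\}$, since otherwise the pair $x - x_0,\, \nabla f(x)$ would contain the zero vector and be linearly dependent. Thus $f^{-1}(f(x_0))$ is a smooth hypersurface at each such $x$ with normal $\nabla f(x)$. The outward normal to $\partial B$ at any boundary point $x$ is the radial vector $x - x_0$, so the linear independence of $x - x_0$ and $\nabla f(x)$ guaranteed by $\mathcal{H}$ is precisely the transversality of $\partial U$ and $f^{-1}(f(x_0))$ at every point of their intersection; equivalently, $\partial U$ is nowhere tangent to $f^{-1}(f(x_0))$.

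For the third bullet I would define
\[
v(x) = (x - x_0) - \frac{\langle x - x_0,\, \nabla f(x)\rangle}{\|\nabla f(x)\|^2}\,\nabla f(x),
\]
the orthogonal projection of $x - x_0$ onto $T_x V = (\nabla f(x))^\perp$. It is smooth on $V \setminus \{x_0\}$ because $\nabla f$ is smooth and nonvanishing there, and it is nowhere zero since $v(x) = 0$ would place $x - x_0$ in the line spanned by $\nabla f(x)$, contradicting $\mathcal{H}$. At any $x \in \partial V = \partial B \cap f^{-1}(f(x_0))$, every $w \in T_x \partial V$ is orthogonal to both $x - x_0$ (as $T_x \partial V \subset T_x \partial B$) and $\nabla f(x)$ (as $T_x \partial V \subset T_x V$), so $\langle v(x), w\rangle = 0$; hence $v(x)$ is normal to $\partial V$ inside $V$. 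Finally, $\langle v(x), x - x_0\rangle = \|x - x_0\|^2 - \langle x - x_0,\, \nabla f(x)\rangle^2/\|\nabla f(x)\|^2 > 0$ by the strict Cauchy--Schwarz inequality applied to the linearly independent pair, so $v$ is outward-pointing.

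The argument is essentially a verification, so I do not anticipate substantial obstacles. The only point that deserves some care is confirming that $\partial V$ is a smooth codimension-one submanifold of $V \setminus \{x_0\}$, but this is exactly the transversality established in the second step; the rest of the proof is the orthogonal decomposition spelled out in the third.
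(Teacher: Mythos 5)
Your proposal is correct and follows essentially the same route as the paper: shrink to a small round ball inside the isolating neighborhood and take $v$ to be the orthogonal projection of the radial field $x-x_0$ onto $T_xV=(\nabla f(x))^{\perp}$, with linear independence from $\mathcal{H}$ giving both the transversality of $\partial U$ to the level set and the nonvanishing of $v$. Your explicit verification that $v$ is outward normal along $\partial V$ (via strict Cauchy--Schwarz) is a detail the paper leaves implicit.
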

\begin{proof} Let $U$ be an isolating compact neighborhood of a critical point $x_0$ of a function $f$. Suppose that $U$ satisfies the hypothesis $\mathcal{H}$. 
Let $r\co U\to \R$ denote the function that associates with $x$ the distance between $x$ and $x_0$ with respect to the Euclidean metric on $\R^n$. 
We may choose a ball neighborhood $U_\varepsilon$ of $x_0$ of radius $\varepsilon$ so that $U_\varepsilon\subset U$ and $\varepsilon$ is a regular value of $r$. Put $V_\varepsilon=U_\varepsilon\cap f^{-1}(x_0)$. 

Since the vectors $x-x_0$ and $\nabla f(x)$ are linearly independent, the projection $v(x)$ of the vector $x-x_0$ to $T_xV$ is non-zero for all $x\in V\setminus\{x_0\}$.  The so-constructed vector field $v$  over $V\setminus \{x_0\}$ satisfies the hypothesis $\mathcal{H}'$.
\end{proof}

\iffalse
\begin{lemma}\label{l:6} There exists a smooth vector field $v_3$ on $V\setminus \{x_0\}$ which is nowhere zero. Furthermore, the vector field $v_3$ can be chosen so that $v_3|\partial V$ is outward normal.   
\end{lemma}
\begin{proof}  Let $r_3$ denote the smooth function on $V\setminus \{x_0\}$ given by the composition $g_3h_{0,c}$. Let $v_3$ denote the gradient vector field of $r_3$. Since $g_3$ is non-singular on $U\setminus D_c(x_0)$, the vector field $v_3$ is nowhere zero. 
 Since the gradient vector field of $g_3$ over $\partial U$ is outward normal, we deduce that $v$ is outward normal over $\partial V$. 
\end{proof}
\fi 

The proof of the following Lemma~\ref{le:13} is similar to one of Lemma~\ref{le:cylinder}.

\begin{lemma}\label{le:13} The manifold $V\setminus \{x_0\}$ is diffeomorphic to $\partial V\times [0, \infty)$. 
\end{lemma}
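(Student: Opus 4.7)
The plan is to follow the argument for Lemma~\ref{le:cylinder}, using the vector field $v$ from hypothesis $\mathcal{H}'$ in place of $v+w$. The cone-like setting had two ingredients driving the construction: a horizontal vector field pushing outward on the singular level, and a vertical ``height'' function strictly monotone along it. In the Rothe setting, the single vector field $v$ plays both roles simultaneously, with the ambient squared distance to $x_0$ serving as the monotone function.

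First, I would identify $v$ as a gradient. Since $v(x)$ is by construction the orthogonal projection of $x-x_0$ onto $T_x V$, and the function $r\co \R^n\to \R$ given by $r(x)=\tfrac12|x-x_0|^2$ satisfies $\nabla r(x)=x-x_0$, the vector field $v$ is the gradient of $r|_V$ with respect to the metric induced on $V\setminus\{x_0\}$ from the ambient Euclidean metric. By hypothesis $\mathcal{H}$, $v$ has no zeros on $V\setminus\{x_0\}$, so $r|_V$ has no critical points there and decreases strictly along every integral curve of $-v$. Since by hypothesis $\mathcal{H}'$ the vector $v$ is outward normal on $\partial V$, no forward trajectory of $-v$ escapes $V$; combined with compactness of $V$ and the absence of zeros of $v$ off $x_0$, this forces every maximal forward trajectory of $-v$ starting in $V\setminus\{x_0\}$ to accumulate only at $x_0$.

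Next, I would construct a completing reparametrization exactly as in Lemma~\ref{le:cylinder}. Choose a smooth function $\mu\co V\setminus\{x_0\}\to (0,\infty)$ that is identically $1$ near $\partial V$ and whose derivatives of all orders tend to $0$ at $x_0$, chosen so that for every $y\in\partial V$ the trajectory $t\mapsto \delta^{-\mu v}_t(y)$ is defined for all $t\in[0,\infty)$ and tends to $x_0$ as $t\to\infty$. Then
\[
\varphi\co \partial V\times[0,\infty)\longrightarrow V\setminus\{x_0\},\qquad (y,t)\mapsto \delta^{-\mu v}_t(y),
\]
is smooth, and is a bijection because along the flow of $+\mu v$ every point in $V\setminus\{x_0\}$ reaches $\partial V$ in a unique finite forward time, by strict monotonicity of $r|_V$. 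The flow box theorem then shows $\varphi^{-1}$ is smooth, so $\varphi$ is a diffeomorphism.

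The only real obstacle is the technical damping step, namely that $\mu$ can be chosen to make the rescaled flow complete and convergent to $x_0$; this is resolved exactly as in the corresponding step of the proof of Lemma~\ref{le:cylinder}. Apart from that, the argument is formal, with the role played in the cone-like case by monotonicity of $f$ along $w$ now played by monotonicity of the ambient squared distance $r$ along $v$.
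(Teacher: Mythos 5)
Your proof is correct and takes essentially the paper's intended route: the paper's own ``proof'' of Lemma~\ref{le:13} is just the remark that it is similar to that of Lemma~\ref{le:cylinder}, and your argument is exactly that adaptation, with the observation that $v=\nabla(r|_V)$ for $r(x)=\tfrac12\,|x-x_0|^2$ supplying the strict Lyapunov function that the pair $(f,\,w)$ supplied in the cone-like case. No gaps beyond the damping step, which is handled identically in Lemma~\ref{le:cylinder}.
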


%We also note that the limit of $\delta^{-\lambda v}_t(z)$ as $t\to \infty$ is $x_0$ for all $z\in \partial V$. Therefore we deduce Corollary~\ref{c:8}. 

\begin{corollary}\label{c:8} The set $V$ is homeomorphic to the cone over its boundary. 
\end{corollary}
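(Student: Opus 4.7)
The strategy is to extend the diffeomorphism of Lemma~\ref{le:13} across the critical point by sending $x_0$ to the vertex of the cone. The point to worry about is continuity at $x_0$; everything else is formal.

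First, I fix the diffeomorphism $\varphi\co V\setminus\{x_0\}\to \partial V\times[0,\infty)$ provided by Lemma~\ref{le:13}, and compose it with the reparametrization $(y,t)\mapsto (y,t/(1+t))$ to obtain a diffeomorphism
\[
\bar\varphi\co V\setminus\{x_0\}\longrightarrow \partial V\times [0,1).
\]
The target sits as an open subset of the cone $C(\partial V)=\partial V\times[0,1]/(\partial V\times\{1\})$. Let $q$ denote the quotient map. I then define $\Phi\co V\to C(\partial V)$ by $\Phi=q\circ\bar\varphi$ on $V\setminus\{x_0\}$ and $\Phi(x_0)=$ vertex. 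By construction $\Phi$ is a bijection, and continuity away from $x_0$ is automatic.

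The heart of the argument is continuity at $x_0$. A neighborhood basis of the vertex in $C(\partial V)$ is given by the sets $N_s=q(\partial V\times(s,1])$ for $s\in(0,1)$, and
\[
\Phi\n(N_s)=\{x_0\}\cup \bar\varphi\n\bigl(\partial V\times(s,1)\bigr)=V\setminus \bar\varphi\n\bigl(\partial V\times[0,s]\bigr).
\]
Since $V$ is compact (as the intersection of the compact isolating neighborhood $U$ with the closed set $f\n(x_0)$), its closed subset $\partial V$ is compact, and therefore $\partial V\times[0,s]$ is compact. Its $\bar\varphi$-preimage is then a compact subset of $V\setminus\{x_0\}$, hence a closed subset of the Hausdorff space $V$. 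Its complement is therefore open and contains $x_0$, so $\Phi\n(N_s)$ is an open neighborhood of $x_0$ in $V$.

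To finish, note that $V$ is compact and $C(\partial V)$ is Hausdorff, so the continuous bijection $\Phi$ is a homeomorphism. The main obstacle, as anticipated, is checking that the missing point $x_0$ of $V$ corresponds to the end at infinity of $\partial V\times[0,\infty)$; this is ensured by the compactness of $\partial V$ together with the properness of the homeomorphism $\bar\varphi$, which prevents any compact slab $\partial V\times[0,s]$ from accumulating at $x_0$.
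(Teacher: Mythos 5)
Your proof is correct and follows exactly the route the paper intends: the paper states Corollary~\ref{c:8} as an immediate consequence of Lemma~\ref{le:13}, and your argument simply writes out the standard point-set details (one-point completion of the open end of $\partial V\times[0,\infty)$, with continuity at $x_0$ from compactness of $\partial V\times[0,s]$ and the compact-to-Hausdorff criterion). Nothing is missing.
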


Now the proof of Proposition~\ref{prop:6} shows that a critical point satisfying the hypothesis $\mathcal{H}$ admits a  cylindrical ball neighborhood.

The above argument proves a characterization of a cone neighborhood pair for an isolated critical point. 
 
\begin{proposition}\label{prop:cone16} Let $B_\varepsilon$ be an $\varepsilon$-disc neighborhood of $x_0$ such that $\partial B_\varepsilon$ is transverse to $M_0$. Suppose that $V=B_\varepsilon\cap M_0$ is a cone over its boundary with vertex at $x_0$. Then there exists a cone neighborhood pair for $x_0$. 
\end{proposition}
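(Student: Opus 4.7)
The plan is to repeat, with only cosmetic changes, the construction carried out in the proof of Theorem~\ref{th:1}, but starting now from $B_\varepsilon$ in place of the cylindrical set $H$. What we need to produce is a continuous vector field $\tilde v$ on $B_\varepsilon$ that vanishes only at $x_0$, is outward normal along $\partial B_\varepsilon$, is tangent to $V$ along $V$, and restricts on $V\setminus\{x_0\}$ to a nowhere-zero field that is outward normal along $\partial V$. Once such a $\tilde v$ is in hand, the flow of $-\tilde v$ (suitably damped by a positive function $\mu$ with $\mu\equiv 1$ near $\partial B_\varepsilon$ and $\mu$ together with all derivatives tending to $0$ at $x_0$) yields the desired cone-pair homeomorphism of $(B_\varepsilon,V)$ onto the cone over $(\partial B_\varepsilon,V\cap\partial B_\varepsilon)$, by exactly the argument of Lemma~\ref{le:cylinder}.

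First, the assumed cone structure on $V$ supplies a radial vector field $v_0$ on $V\setminus\{x_0\}$ via the formula $(\lambda\circ\pi_{[0,1]})\cdot v'$ used in the proof of Proposition~\ref{prop:6}: it is nowhere zero on $V\setminus\{x_0\}$, outward normal along $\partial V$, and extends continuously by $0$ at $x_0$. Second, we extend $v_0$ to a neighborhood $N$ of $V$ inside $B_\varepsilon$ by pulling back along the gradient flow of $f$, as was done for the vector field $v$ in Lemma~\ref{l:11}; this construction is purely local and requires only that $B_\varepsilon$ be an isolating neighborhood of $x_0$. Adding the pushing field $w=f\cdot\nabla f$ produces a vector field on $N$ which vanishes only at $x_0$, is tangent to $V$ along $V$, and on $N\setminus M_0$ has a nontrivial component along $\nabla f$.

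Third, we globalize from $N$ to $B_\varepsilon$. Using the hypothesis that $\partial B_\varepsilon$ is transverse to $M_0$, the set $\partial V=V\cap\partial B_\varepsilon$ is a smooth submanifold of $\partial B_\varepsilon$, and we may choose an outward-pointing vector field $u$ defined near $\partial B_\varepsilon$ which is tangent to $V$ along $V$. A partition-of-unity blend of $u$ with the locally constructed $v+w$ yields $\tilde v$ satisfying all the listed requirements; a standard corner-smoothing near the stratum $\partial V$ (in the style of \cite{Pu02}) preserves both the outward-normal condition and the tangency to $V$. Flowing along $-\mu\tilde v$ then establishes the diffeomorphism $B_\varepsilon\setminus\{x_0\}\cong\partial B_\varepsilon\times[0,\infty)$ carrying $V\setminus\{x_0\}$ onto $\partial V\times[0,\infty)$, as required.

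The principal obstacle is this last compatibility step: ensuring that the extension of $v_0$ off $V$ can be arranged simultaneously to be outward normal on $\partial B_\varepsilon$ and to leave $V$ invariant. Transversality of $\partial B_\varepsilon$ to $M_0$ is precisely what makes it possible, since it guarantees that along $\partial V$ the outward-normal direction to $\partial B_\varepsilon$ and the tangent space to $V$ span complementary subspaces; this transversality is what allows the two conditions to be enforced simultaneously via a convex combination, and what lets the corner smoothing go through without disturbing the asymptotic behavior of the $-\tilde v$ flow at $x_0$.
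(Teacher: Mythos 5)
Your first two steps (the radial field $v_0$ on $V\setminus\{x_0\}$ coming from the cone structure, its extension by gradient-flow transport as in Lemma~\ref{l:11}, and the addition of $w=f\cdot\nabla f$) coincide with the paper's construction. But your third step, where you try to install the cone structure on $B_\varepsilon$ itself, contains a genuine gap. The field $v+w$ is defined only on the saturation of $V$ by gradient trajectories (together with $D(x_0)$); inside $B_\varepsilon$ this set is not all of $B_\varepsilon$, and near $\partial V$ it is only a ``half-neighborhood'', so on the remainder of $B_\varepsilon$ you have no structured field --- only $w$, whose backward flow drives points toward $M_0$, i.e.\ toward points of $M_0\setminus V$ lying outside $B_\varepsilon$. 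Your proposed remedy, a partition-of-unity blend with an outward field $u$ near $\partial B_\varepsilon$, is not justified: a convex combination of two nonvanishing fields can vanish, and $v+w$ has no reason to point outward along $\partial B_\varepsilon$, since it was built from the gradient flow and the cone structure of $V$ with no reference to $\partial B_\varepsilon$. Transversality of $\partial B_\varepsilon$ to $M_0$ only controls the corner stratum $\partial V$; it neither makes the blend nonvanishing elsewhere nor yields the key dynamical fact (the analogue of Lemma~\ref{le:cylinder}) that every backward trajectory in $B_\varepsilon\setminus\{x_0\}$ stays in $B_\varepsilon$ and converges to $x_0$ --- for points outside the saturation of $V$ the mechanism ``flow to $V$ under $-w$, then to $x_0$ under $-v$'' is simply unavailable.

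The paper sidesteps all of this by not asking $(B_\varepsilon,V)$ itself to be the cone pair. It uses the hypothesis only to extract the nowhere-zero field $v$ on $V\setminus\{x_0\}$ that is outward normal along $\partial V$, and then reruns the proof of Proposition~\ref{prop:6} to build the cylindrical neighborhood $H$, namely the gradient saturation of $V$ in a slab $M_{[-c,c]}$, on which $v+w$ is everywhere defined, is automatically outward along $\partial H$ after corner smoothing, and restricts to $v$ over $V$; the pair $(H, H\cap V)$ is then the required cone neighborhood pair. To salvage your route you would have to prove separately that the blended field is nonvanishing and that its backward flow retracts all of $B_\varepsilon$ onto $x_0$ while preserving $V$; since the proposition asserts only the existence of some cone neighborhood pair, replacing $B_\varepsilon$ by $H$ is the correct move.
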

\begin{proof} Since $V$ is a cone over its boundary with vertex at $x_0$, the complement $V\setminus \{x_0\}$ is diffeomorphic to $V\times (-\infty, 0]$. Consequently, there is a nowhere zero vector field $v$ on $V\setminus \{x_0\}$ that is outward normal over $\partial V$. By an argument as in the proof of Proposition~\ref{prop:6}, we conclude that the critical point $x_0$ admits a cylindrical ball neighborhood $H$. In fact, the construction in the proof of Proposition~\ref{prop:6} also produces a nowhere zero vector field $w+v$ over $H\setminus\{0\}$ that restricts to $v$ over $V$. Consequently, the pair $(H, H\cap V)$ is a cone neighborhood pair for $x_0$. 
\end{proof}

\section{Algebraic singular points}\label{s:4alg}

In this section we make precise the statement that the set of Taylor series of map germs with non cone-like  critical points is of infinite codimension in the space of all Taylor series. In subsection~\ref{s:tour} we  recall the Tourgeron theorem on finitely determined map germs and deduce that map germs with non cone-like critical points are extremely rare, see Corollary~\ref{c:t17}. In subsection~\ref{s:inf} we will discuss infinitely determined map germs, which is a larger class than the class of finitely determined map germs, and show that if a singular map germ is infinitely determined, then its critical point is cone-like, see Proposition~\ref{p:inf20}.

\subsection{Finitely determined map germs}\label{s:tour}
Let $\mathcal{E}(n, p)$ denote the vector space of map germs at $0$ of smooth functions  $\R^n\to \R^p$. The vector space $\mathcal{E}(n)=\mathcal{E}(n, 1)$ of function germs is a ring with multiplication and addition induced by multiplication and addition of functions. 
Let $\mathcal{B}(n)$ denote the subset of invertible map germs $f$ in $\mathcal{E}(n,n)$ with an additional condition $f(0)=0$. The set $\mathcal{B}(n)$ is a group with operation given by the composition. 
We say that map germs $f_0, f_1\in \mathcal{E}(n, p)$ are \emph{right equivalent} if there is a map germ $h\in \mathcal{B}(n)$ such that $f_0\circ h=f_1$. For a non-negative integer $k$, the \emph{$k$-jet} $\hat{f}$ of a map germ $f$ at $0$ is the Taylor polynomial of $f$ at $0$ of order $k$. Similarly, for $k=\infty$, the $k$-jet of $f$ is the Taylor series of $f$ at $0$. The vector space of all $k$-jets at $0$ is denoted by $\hat{\mathcal{E}}_k(n, p)$. The vector space of Taylor series of map germs $\R^n\to \R^p$ at $0$ is called the (infinite) \emph{jet space}. It is denoted by $\hat{\mathcal{E}}(n, p)\co =\hat{\mathcal{E}}_\infty(n, p)$. There is a sequence of projections of Euclidean spaces
\[
   \hat{\mathcal{E}}(n, p)\to \cdots \to \hat{\mathcal{E}}_{k+1}(n, p)\to \hat{\mathcal{E}}_k(n, p)\to \cdots 
\]
that truncate the series/polynomials.  The truncation $\hat{\mathcal{E}}(n, p)\to \hat{\mathcal{E}}_k(n, p)$ is denoted by $\pi_k$. We say that a subset $X$ in $\hat{\mathcal{E}}(n, p)$ is \emph{proalgebraic} if it is of the form $\cap \pi^{-1}_k X_k$ for some algebraic sets $X_k\subset \hat{\mathcal{E}}_k(n, p)$. By definition, the \emph{codimension} of $X$ is the upper limit of codimensions of the algebraic subsets $X_k$. 

A map germ $f$ is said to be \emph{$k$-definite} if every map germ $g$ with the same $k$-jet $\hat{g}=\hat{f}$ is right equivalent to $f$. For example, a function germ $f\co \R^n\to \R$ is $1$-determined at $0$ if and only if it is non-singular at $0$, i.e., $df(0)\ne 0$. A function germ $f\co \R^n\to \R$ is $2$-determined at a critical point $0$ if and only if $0$ is a Morse critical point. 
We say that a map germ $f\co (\R^n, 0)\to (\R^p, 0)$ is \emph{finite} if $\mathcal{E}(n)/\langle f_1,..., f_p\rangle$ is finite dimensional, where $\langle f_1,..., f_p\rangle$ is the ideal generated by the components $f_1,..., f_p$ of $f$. We note that the condition 
$\mathcal{E}(n)/\langle f_1,..., f_p\rangle=k$ implies \[
\dim \mathcal{E}(n)/(\langle f_1,..., f_p\rangle + m(n)^{k+1})=\dim \hat{\mathcal{E}}_k(n)/\langle \hat{f}_1,..., \hat{f}_p\rangle \le k,
\]
where $m(n)$ is the maximal ideal in $\mathcal{E}(n)$. The converse is also true, i.e., 
if we have $\dim \hat{\mathcal{E}}_k(n)/\langle \hat{f}_1,..., \hat{f}_p\rangle \le k$ for some $k$, then $f$ is finite. 
It is known that if $df\co \R^n\to \R$ is a finite map germ at $0$, then the map germ $f\co \R^n\to \R$ at $0$ is finitely determined, e.g., see \cite[\S11.10]{BL}.

On the other hand, let $Y$ denote the set of Taylor series of non-finite map germs. Then 
$
    Y=\cap \pi_k^{-1}Y_k
$
for the sets
\[
Y_k=\{\hat{g}\in \hat{\mathcal{E}}_k(n, p)\ |\  \dim \hat{\mathcal{E}}_k(n)/\langle \hat{g}_1,..., \hat{g}_p\rangle >k\},
\]
where $\hat{g}_1,..., \hat{g}_p$ are components of $\hat{g}$. 

\begin{theorem}[Tourgeron Theorem] The sets $Y_k$ are algebraic. In fact, if $n\le p$, then $Y$ is a proalgebraic set of infinite codimension. 
\end{theorem}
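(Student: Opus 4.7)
The theorem splits into two essentially independent claims, which I would address in turn.

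\emph{Step 1 (algebraicity of each $Y_k$).} The ideal $\langle \hat g_1, \ldots, \hat g_p\rangle$ in $\hat{\mathcal{E}}_k(n)$ is precisely the image of the $\R$-linear multiplication map
\[
\mu_k \co \hat{\mathcal{E}}_k(n)^p \longrightarrow \hat{\mathcal{E}}_k(n), \qquad (h_1, \ldots, h_p) \longmapsto \sum_{i=1}^p h_i \hat g_i \pmod{m(n)^{k+1}}.
\]
Choosing monomial bases on both sides, the matrix of $\mu_k$ has entries that are homogeneous polynomials in the coefficients of $\hat g$. The condition defining $Y_k$, namely $\dim \hat{\mathcal{E}}_k(n)/\operatorname{im}(\mu_k) > k$, is equivalent to $\operatorname{rank}(\mu_k) < \dim \hat{\mathcal{E}}_k(n) - k$, which is cut out by the simultaneous vanishing of the appropriate minors of this matrix. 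Hence $Y_k$ is an algebraic subset of $\hat{\mathcal{E}}_k(n,p)$.

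\emph{Step 2 (infinite codimension for $n \le p$).} First I observe that if $d\hat g(0)\co \R^n \to \R^p$ has maximal rank $n$, then after a linear change of coordinates in $\R^p$ one may assume that $n$ of the components of $\hat g$ form a regular system of parameters on $\R^n$ at the origin; those components alone generate $m(n)$ in $\hat{\mathcal{E}}_k(n)$, the local algebra is $\R$, and so $\hat g \notin Y_k$. Consequently $Y_k$ is contained in the locus of rank drop of the first jet. To upgrade this fixed bound to one that grows with $k$, I stratify further by the rank $r < n$ of $d\hat g(0)$. In the rank-$r$ stratum, after coordinate changes in source and target, $r$ components of $\hat g$ may be taken to be $x_1, \ldots, x_r$; the remaining $p - r$ components then descend, modulo these, to a germ of a map $(\R^{n-r}, 0) \to (\R^{p-r}, 0)$ with vanishing first jet. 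For this reduced local algebra to have dimension greater than $k$ requires increasingly many jet conditions: if the residual components all vanish to order $\ge s$, the reduced local algebra contains $\hat{\mathcal{E}}_{s-1}(n-r)$, and conversely any $\hat g \in Y_k$ inside this stratum must satisfy a correspondingly delicate degeneracy condition whose codimension grows polynomially in $s$, where $s$ itself must grow with $k$. Combining the codimension of the stratum in the $1$-jet space with the codimension of this additional degeneracy yields $\operatorname{codim}(Y_k) \to \infty$.

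\emph{Main obstacle.} Step 1 is a routine linear-algebra exercise. The real difficulty is in Step 2, in converting the heuristic ``a large local algebra forces many vanishing conditions on $\hat g$'' into a rigorous lower bound on $\operatorname{codim}(Y_k)$ that is uniform across all strata and unbounded in $k$. I would carry this out through a Thom--Boardman-style stratification of $\hat{\mathcal{E}}_k(n,p)$: under the standard $\mathcal{K}$-action (coordinate changes in the source and multiplication by invertible $p \times p$ matrices of germs on the target) each rank-$r$ stratum is preserved, and the codimension of $Y_k$ can be estimated on a normal slice. Making the reduction across strata compatible with the jet filtration, so that codimensions genuinely add, is the technical heart of the argument and parallels the classical Tougeron finite-determinacy theorem.
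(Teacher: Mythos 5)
First, note that the paper does not actually prove this theorem: immediately after the statement it says ``For proofs of the Tourgeron Theorem and its corollary we refer to \cite[Theorem 13.4 and Remark 13.6]{BL}.'' So your proposal is being compared against a citation, not an in-paper argument.

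Your Step 1 is correct and complete: the ideal $\langle \hat g_1,\dots,\hat g_p\rangle$ in the finite-dimensional truncated ring $\hat{\mathcal{E}}_k(n)$ is the image of the linear map $\mu_k$, whose matrix entries are (linear, hence polynomial) functions of the coefficients of $\hat g$, and the condition $\dim\operatorname{coker}(\mu_k)>k$ is the vanishing of all minors of $\mu_k$ of size $\dim\hat{\mathcal{E}}_k(n)-k$. This is exactly the standard argument and settles the first assertion.

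Step 2, however, has a genuine gap, and it sits precisely where the entire content of Tougeron's theorem lies. Your sentence ``conversely any $\hat g\in Y_k$ inside this stratum must satisfy a correspondingly delicate degeneracy condition whose codimension grows polynomially in $s$, where $s$ itself must grow with $k$'' is an assertion of the theorem, not a proof of it: what you establish is that deep vanishing of the residual components forces a large local algebra (conditions $\Rightarrow$ large algebra), whereas the theorem needs the converse implication together with a quantitative codimension count (large algebra $\Rightarrow$ many independent algebraic conditions on the jet). Neither the claim that $s$ must grow with $k$, nor the additivity of codimensions across your rank stratification, nor the place where the hypothesis $n\le p$ enters the count is justified; for $p<n$ a generic germ already has infinite-dimensional local algebra, so the codimension bound must use $p\ge n$ in an essential way that your outline never localizes. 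The missing ingredient is the core estimate of \cite[\S 13]{BL}: roughly, that $\dim\hat{\mathcal{E}}_k(n)/\langle\hat g\rangle>k$ forces the descending chain $\langle\hat g\rangle+m(n)^j$ to drop strictly at every stage $j\le k$, and that for $p\ge n$ each failure $m(n)^j\not\subset\langle\hat g\rangle+m(n)^{j+1}$ for large $j$ is an algebraic condition of large codimension on the $j$-jet; summing these gives $\operatorname{codim}Y_k\to\infty$. Without that lemma (or an equivalent), your Step 2 is a plausibility sketch rather than a proof, as your own ``Main obstacle'' paragraph concedes.
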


\begin{corollary} The set of jets of map germs $f\in \mathcal{E}(n)$ for which $df$ is not finite is a subset of infinite codimension. Thus, the set of jets of non-finitely determined map germs is a subset of a proalgebraic set of infinite codimension. 
\end{corollary}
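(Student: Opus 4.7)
The plan is to pull the Tourgeron Theorem back along the differentiation operator $d\co \mathcal{E}(n)\to \mathcal{E}(n,n)$, $f\mapsto df$, and then observe that the second assertion is immediate from the first: the paragraph preceding the statement records that finiteness of $df$ implies finite determinacy of $f$, so the set of jets of non-finitely determined germs is contained in the set of jets of germs whose differential is not finite. I therefore focus on the first assertion.

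First I would apply the Tourgeron Theorem with $p=n$ (so the hypothesis $n\le p$ is trivially satisfied) to produce the proalgebraic set $Y=\bigcap_k \pi_k\n(Y_k)\subset \hat{\mathcal{E}}(n,n)$ of infinite codimension, whose members are Taylor series of non-finite map germs. Since differentiation is linear and lowers polynomial degree by one, it induces linear truncations
\[
\hat d_k\co \hat{\mathcal{E}}_{k+1}(n)\longrightarrow \hat{\mathcal{E}}_k(n,n),\qquad \hat f\longmapsto (\partial\hat f/\partial x_1,\dots,\partial\hat f/\partial x_n),
\]
which assemble into a linear map $\hat d\co \hat{\mathcal{E}}(n)\to \hat{\mathcal{E}}(n,n)$. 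The set $Z:=\hat d\n(Y)=\bigcap_k \pi_{k+1}\n(\hat d_k\n(Y_k))$ is then proalgebraic, since the preimage of an algebraic set under a linear map is algebraic, and by construction it contains every jet of an $f\in \mathcal{E}(n)$ for which $df$ is not finite. The kernel of each $\hat d_k$ consists only of the constants, so $\hat d_k$ restricts to a linear isomorphism from the hyperplane of $(k+1)$-jets vanishing at the origin onto its image $I_k\subset \hat{\mathcal{E}}_k(n,n)$, and consequently
\[
\mathop{\mathrm{codim}}_{\hat{\mathcal{E}}_{k+1}(n)}\,\hat d_k\n(Y_k)=\mathop{\mathrm{codim}}_{I_k}(Y_k\cap I_k).
\]

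Showing that this right-hand side tends to infinity with $k$ is the main obstacle: $I_k$ is the distinguished linear subspace of $k$-jets of \emph{gradient} vector fields, cut out by the symmetry relations $\partial g_i/\partial x_j=\partial g_j/\partial x_i$, and a priori the Tourgeron variety $Y_k$ could intersect $I_k$ in a locus of lower codimension than it has in the ambient space $\hat{\mathcal{E}}_k(n,n)$. To bypass this, rather than invoking Tourgeron in $\hat{\mathcal{E}}(n,n)$ as a black box I would revisit the underlying Tougeron-type computation directly on $\hat{\mathcal{E}}(n)$: for each integer $N$ and each $k$ the set
\[
\bigl\{\hat f\in \hat{\mathcal{E}}_k(n)\ :\ \dim_\R \hat{\mathcal{E}}_{k-1}(n)/\langle \partial\hat f/\partial x_1,\dots,\partial\hat f/\partial x_n\rangle>N\bigr\}
\]
is determinantal, being defined by the vanishing of explicit minors of the coefficient matrix of $\{x^\alpha\partial\hat f/\partial x_i\}$ relative to a monomial basis of $\hat{\mathcal{E}}_{k-1}(n)$, and the standard dimension count shows that its codimension grows with $N$. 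Taking $N=k-1$ and intersecting over $k$ exhibits $Z$ as contained in a proalgebraic set of infinite codimension in $\hat{\mathcal{E}}(n)$, which gives both assertions of the corollary.
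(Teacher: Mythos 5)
Your reduction of the second assertion to the first is correct, and you have put your finger on the one genuine subtlety: the preimage of an algebraic set under the linear map $\hat d_k$ (injective modulo constants) is algebraic, but its codimension equals the codimension of $Y_k\cap I_k$ inside the image $I_k$, which could a priori be far smaller than the codimension of $Y_k$ in $\hat{\mathcal{E}}_k(n,n)$. The paper does not argue this point itself; it delegates the whole corollary to \cite[Remark 13.6]{BL}, where the codimension estimate from the proof of the Tourgeron theorem is checked to apply to the family of Jacobian ideals.

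The gap is in your final step. Observing that
\[
\bigl\{\hat f\in \hat{\mathcal{E}}_k(n)\ :\ \dim_\R \hat{\mathcal{E}}_{k-1}(n)/\langle \partial\hat f/\partial x_1,\dots,\partial\hat f/\partial x_n\rangle>N\bigr\}
\]
is determinantal only re-establishes algebraicity, which you already had by pulling back $Y_k$; it yields no lower bound on codimension. The minors in question are not generic: the matrix is the (highly structured) matrix of the multiplication map $(g_1,\dots,g_n)\mapsto \sum g_i\,\partial\hat f/\partial x_i$ with entries linear in the coefficients of $\hat f$, so the expected-codimension formula for generic determinantal loci does not apply, and determinantal loci in general can have arbitrarily small codimension. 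The claim that ``the standard dimension count shows that its codimension grows with $N$'' is therefore not a routine count --- it is precisely Tougeron's codimension estimate in the restricted gradient setting, i.e., the very statement you set out to prove. To close the gap you would need to open up the proof of \cite[Theorem 13.4]{BL} (for instance, the step showing that a jet in $Y_k$ must vanish to high order along some direction) and verify that the polynomial conditions it extracts remain independent after restriction to the subspace $I_k$ of gradient jets --- equivalently, translate them into conditions on $\hat f$ and recount. That verification is the actual content of \cite[Remark 13.6]{BL}.
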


For proofs of the Tourgeron Theorem and its corollary we refer to \cite[Theorem 13.4 and Remark 13.6]{BL}  respectively. 

We note that a $k$-determined map germ $f$ is right equivalent to a polynomial $g$ of degree $k$. Since the critical point of a polynomial $g$ is algebraic, we deduce that the critical point of any finitely determined map germ $f$ is cone-like. 

\begin{corollary} \label{c:t17} The set of jets of map germs with non cone-like critical points is a subset of a proalgebraic set of infinite codimension. 
\end{corollary}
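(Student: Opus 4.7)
The plan is to deduce Corollary~\ref{c:t17} directly from the preceding corollary (which places the jets of non-finitely determined function germs inside a proalgebraic set of infinite codimension in $\hat{\mathcal{E}}(n)$) by establishing the set inclusion
\[
\{\hat f \in \hat{\mathcal{E}}(n)\,:\, f \text{ has a non cone-like critical point at } 0\} \ \subset\ \{\hat f\,:\, f \text{ is not finitely determined}\}.
\]
Contrapositively, I need to verify that every finitely determined function germ has a cone-like critical point at $0$.

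So the content is the implication \emph{finitely determined} $\Rightarrow$ \emph{cone-like}. First, I would use the paper's observation that a $k$-determined germ $f\in\mathcal{E}(n)$ is right equivalent to a polynomial: indeed, the Taylor polynomial $g=\pi_k f$ of degree $k$ shares the $k$-jet of $f$, so by $k$-determinacy there exists $h\in\mathcal{B}(n)$ with $f\circ h=g$ as germs at $0$. Next, I would apply the Milnor-type conic structure theorem recalled in the introduction to the polynomial $g$: since the critical point $0$ of $g$ is algebraic by definition, there exists $\varepsilon>0$ such that $(B_\varepsilon, g^{-1}(g(0))\cap B_\varepsilon)$ is a cone neighborhood pair for $0$, and in particular $g^{-1}(g(0))\cap B_\varepsilon$ is a cone over its boundary with vertex $0$, so $0$ is a cone-like critical point of $g$.

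Finally, I would transport this cone structure through $h$. Since $h$ is a germ of a diffeomorphism at $0$ with $h(0)=0$ and $g=f\circ h$, it carries the germ of $g^{-1}(g(0))$ bijectively onto the germ of $f^{-1}(f(0))$; the image $h(g^{-1}(g(0))\cap B_\varepsilon)$ of a small cone neighborhood of $0$ in $g^{-1}(g(0))$ is therefore a cone neighborhood of $0$ in $f^{-1}(f(0))$. Hence the critical point $0$ of $f$ is cone-like, which combined with the inclusion above and the preceding corollary yields Corollary~\ref{c:t17}. The only potentially delicate point is the invariance of the cone-like property under right equivalence, but it is essentially tautological once one recalls that right equivalence is realized by a germ diffeomorphism of the source fixing the origin and sending the zero level set of one function onto that of the other.
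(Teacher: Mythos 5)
Your argument is correct and is essentially the paper's own: the paper likewise deduces the corollary from the preceding one by noting that a finitely determined germ is right equivalent to its Taylor polynomial, whose critical point is algebraic and hence cone-like by the conic structure theorem. You merely make explicit the (tautological) transport of the cone neighborhood through the right equivalence, which the paper leaves implicit.
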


\subsection{Infinitely determined map germs}\label{s:inf}
Next, we turn to infinitely determined map germs $f\co \R^n\to \R^p$ at $0$. 
We say that a map germ $f$ at $0$ is \emph{infinitely determined} if any map germ $g$ at $0$ that has the same Taylor series as $f$ is right equivalent to $f$.

Let $J_f$ denote the ideal in $\mathcal{E}(n)$ generated by $p\times p$ minors of the differential $d_0f$ of $f$ at $0$, where $f$ is a map germ at $0$ of a function $\R^n\to \R^p$. We say that a finitely generated ideal $I$ in $\mathcal{E}(n)$ is \emph{elliptic} if it contains $m_n^\infty$, where $m_n$ is the maximal ideal in $\mathcal{E}(n)$. Part of Theorem~\ref{th:8} was announced in \cite{Ku77}. For its proof we refer the reader to \cite{Br81}, \cite{N77}, and \cite{Wi81}, see also \cite[Theorem 6.1 and Lemma 6.2]{Wa81}.

\begin{theorem}\label{th:8} The following conditions are equivalent:
\begin{itemize}
\item A map germ $f$ is infinitely determined.
\item $J_f$ is elliptic.
\item $||d_xf||^2\ge C||x||^\alpha$ holds on some neighborhood of $0$ for some constants $C$ and $\alpha>0$, where  $||d_xf||^2$ is the sum of squares of $p\times p$ minors of $d_xf$. 
 \end{itemize} 
\end{theorem}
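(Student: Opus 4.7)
The plan is to prove the three-way equivalence via the cycle
$(\text{ii})\Leftrightarrow(\text{iii})\Rightarrow(\text{i})\Rightarrow(\text{ii})$, with the bulk of the work in the last implication.

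The equivalence $(\text{ii})\Leftrightarrow(\text{iii})$ I would obtain as a direct application of the classical \L ojasiewicz inequality for smooth germs: for any finite collection $g_1,\ldots,g_r\in\mathcal{E}(n)$ of germs vanishing at $0$, the ideal $\langle g_1,\ldots,g_r\rangle$ contains $m_n^\infty$ if and only if there exist constants $C,\alpha>0$ with $\sum_i g_i(x)^2\ge C\|x\|^\alpha$ on some neighborhood of $0$. I would apply this to the $p\times p$ minors of $d_0 f$, which generate $J_f$ and whose squared sum is, by definition, the quantity $\|d_x f\|^2$ appearing in $(\text{iii})$.

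For $(\text{ii})\Rightarrow(\text{i})$ I would use the Mather--Thom homotopy method. Given $g\in\mathcal{E}(n,p)$ with the same Taylor series as $f$, set $h=g-f$, which is flat, and consider the path $f_t=f+t h$ for $t\in[0,1]$. One seeks a smooth family of germs $\varphi_t\in\mathcal{B}(n)$ with $\varphi_0=\mathrm{id}$ and $f_t\circ\varphi_t=f$; differentiating in $t$ reduces this to the homological equation $df_t\cdot X_t=-h$ for a time-dependent (flat) vector field $X_t$. Ellipticity of $J_f$ persists uniformly along the path, since adding a flat term preserves the \L ojasiewicz estimate $(\text{iii})$ on a shrinking neighborhood of $0$; consequently each component of the flat germ $h$ lies in $J_{f_t}$, and $X_t$ can be solved componentwise. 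Integrating $X_t$ over $[0,1]$ produces a diffeomorphism germ $\varphi_1$ with $g\circ\varphi_1=f$, so $f$ and $g$ are right equivalent.

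The main obstacle is $(\text{i})\Rightarrow(\text{ii})$, which I would prove by contraposition. Assuming $(\text{iii})$ fails, choose a sequence $x_k\to 0$ along which $\|d_{x_k}f\|$ decays faster than every power of $\|x_k\|$. The plan is to hand-craft a flat perturbation $h$ concentrated in a shrinking sequence of disjoint annular shells clustered around the $x_k$, chosen so that the infinitesimal equation $df\cdot X=h$ admits no smooth solution vanishing to infinite order at $0$, and then to upgrade this infinitesimal obstruction to non-equivalence of $f$ and $f+h$ under $\mathcal{B}(n)$. The delicate step is the upgrade: any hypothetical conjugating diffeomorphism $\varphi$ would satisfy $(f+h)\circ\varphi=f$, and Taylor-expanding this identity along the $x_k$ forces successively higher derivatives of $\varphi$ to blow up, contradicting smoothness. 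The precise analytic control of the annuli and amplitudes of $h$ is the technical heart of the argument, and is where I would defer to the detailed constructions of Brodersen, Nishimura, Wilson and Wall in \cite{Br81,N77,Wi81,Wa81}.
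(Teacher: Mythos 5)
The paper does not prove Theorem~\ref{th:8} at all: it is quoted as a known result, announced in part by Kucharz \cite{Ku77}, with the proof deferred entirely to \cite{Br81}, \cite{N77}, \cite{Wi81} and \cite[Theorem 6.1, Lemma 6.2]{Wa81}. So there is no in-paper argument to compare yours against; what you have written is an outline of the standard strategy that those references carry out. As an outline it is sound, and for the purposes of this paper (which only uses the theorem as a black box in \S\ref{s:inf}) a citation is all that is required.

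That said, your sketch should not be mistaken for a proof, and the gaps are not only in the implication you flag. First, the equivalence of ellipticity of a finitely generated ideal with the \L ojasiewicz-type lower bound on the sum of squares of its generators is itself a substantive theorem (due to Tougeron/Bochnak--\L ojasiewicz), not ``a direct application of the classical \L ojasiewicz inequality'': the direction from the analytic estimate to $m_n^\infty\subset I$ requires a division argument showing every flat germ is a combination of the generators with smooth coefficients, and the converse requires extracting a uniform exponent $\alpha$. Second, in the homotopy method for $(\text{ii})\Rightarrow(\text{i})$ the phrase ``solved componentwise'' hides the essential module-theoretic step: for $p>1$ the homological equation $df_t\cdot X_t=-h$ lives in $f_t^*\theta_p$, and one passes from the ideal $J_{f_t}$ of $p\times p$ minors to the tangent module $tf_t(\theta_n)$ via Cramer's rule, and must moreover produce $X_t$ smooth in $(x,t)$, vanishing at $0$, and integrable to a germ of diffeomorphism; this is exactly the content of \cite[Lemma 6.2]{Wa81}. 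Third, as you acknowledge, the contrapositive of $(\text{i})\Rightarrow(\text{iii})$ --- building a flat perturbation along a sequence $x_k\to 0$ violating the \L ojasiewicz estimate and showing it destroys right equivalence --- is the technical heart and is left to the literature. Since the paper defers all three points to the same sources, the appropriate conclusion is that your proposal reproduces the architecture of the known proof without supplying the analysis; it is an acceptable summary but not an independent argument.
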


By Theorem~\ref{th:8}, if $f\co \R^n\to \R$ is not an infinitely determined function germ at $0$, then there is a sequence of points $z_1, z_2, ....$ converging to $0$ such that 	
\[
\left(\frac{\partial f}{\partial x_1}(z_i)\right)^2 + \cdots +\left(\frac{\partial f}{\partial x_n}(z_i)\right)^2=o||z_i||^N
\]
 for all $N$.

\begin{example} The map germ $f(x, y)=(x^2+y^2)^2$ is not finitely determined \cite[p. 237]{Wi82}, but it is infinitely determined. The map germ $g(x, y)=(x, y^4+xy^2)$ is not infinitely determined.  
\end{example}

\begin{proposition}\label{p:inf20} If a singular map germ is infinitely determined, then its critical point is cone-like. 
\end{proposition}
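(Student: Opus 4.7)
The plan is to combine the Łojasiewicz-type inequality supplied by Theorem~\ref{th:8} with the Kuiper--Kuo $C^0$-sufficiency criterion in order to reduce the proposition to the algebraic case, where cone-likeness of the zero set of a polynomial at an isolated singular point is classical. Assume $f(0)=0$ and write $V=f\n(0)$.

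By the equivalence of the three conditions in Theorem~\ref{th:8}, infinite determination supplies constants $C>0$ and $\alpha>0$ with $\|d_xf\|^2\ge C\|x\|^\alpha$ on a neighborhood of $0$; in particular $0$ is an isolated critical point and $d_xf$ has rank $p$ off $0$. Choosing an integer $k$ with $2(k-1)\ge \alpha$, I would rewrite this estimate in the Kuiper--Kuo form
\[
\|d_xf\|\;\ge\;\sqrt{C}\,\|x\|^{k-1}
\]
near $0$. By the multi-dimensional Kuiper--Kuo theorem in the Bochnak--Łojasiewicz and Wall form (cf.\ \cite{Br81,N77,Wi81,Wa81}), this is precisely the hypothesis guaranteeing that the $k$-jet of $f$ at $0$ is $C^0$-sufficient. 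Hence there is a germ of a local homeomorphism $\phi\co (\R^n,0)\to (\R^n,0)$ carrying the zero set $P\n(0)$ of the $k$-th Taylor polynomial $P$ of $f$ onto $V$.

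Since $P$ is polynomial, $P\n(0)$ is a real algebraic set, so by Milnor's classical conical neighborhood theorem for algebraic singular points (\cite{Mi69}, recalled just before Theorem~\ref{th:1}) it admits a cone neighborhood of $0$. Its image under $\phi$ is then a cone neighborhood of $0$ in $V$, which shows $0$ is cone-like. The principal obstacle I expect is pinning down the correct variant of the Kuiper--Kuo theorem: for $p\ge 2$ one must use the form of $C^0$-sufficiency (right or contact) that genuinely produces a homeomorphism identifying the fibers $f\n(0)$ and $P\n(0)$. The Łojasiewicz hypothesis $\|d_xf\|^2\ge C\|x\|^\alpha$ is precisely the bound on the ideal $J_f$ of $p\times p$ minors that drives the contact-equivalence version, which is enough for our purposes; once this ingredient is in place, the remainder of the argument is a routine transport of the algebraic cone structure along $\phi$.
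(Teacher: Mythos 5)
Your argument is correct, and it reaches the same destination as the paper (reduction to an algebraic zero set plus Milnor's cone theorem) by a genuinely different middle step. The paper stays inside the determinacy framework of Section~\ref{s:4alg}: infinite determinacy gives infinite $\mathcal{K}$-determinacy, Brodersen's theorem \cite{Br81} upgrades this to \emph{finite} $\mathcal{K}$-determinacy, so $f$ is $\mathcal{K}$-equivalent to a polynomial and one gets a \emph{diffeomorphism} of source germs carrying $f^{-1}(0)$ onto an algebraic set. You instead feed the {\L}ojasiewicz estimate of Theorem~\ref{th:8} into the Kuiper--Kuo/Kuo $v$-sufficiency criterion and obtain only a \emph{homeomorphism} identifying $f^{-1}(0)$ with $P^{-1}(0)$ --- which is all that cone-likeness, a topological property, requires. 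You also correctly isolate the one delicate point: for $p\ge 2$ the right tool is Kuo's $v$-sufficiency theorem, whose hypothesis (a lower bound, on a horn neighborhood of $f^{-1}(0)$, for the distance from $d_xf$ to the non-surjective matrices) does follow from your bound on the sum of squares of the $p\times p$ minors via Cauchy--Binet, since $\|d_xf\|$ is bounded above near $0$; and since $d(f-P)=o(\|x\|^{k-1})$, the estimate transfers from $f$ to the jet $P$, as the sufficiency theorems require. The trade-off between the two routes: yours makes the analytic mechanism explicit and never needs finite determinacy, but imports Kuo's theorem, which the paper does not otherwise cite; the paper's route reuses the references already invoked for Theorem~\ref{th:8} and yields the stronger (smooth) identification of zero sets, at the cost of quoting Brodersen's finiteness theorem as a black box.
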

If $E$ is an equivalence relation on map germs, then we say that a map germ $f$ is $k$-$E$-determined if every map germ $g$ with the same $k$-jet $\hat{g}$ as the $k$-jet $\hat{f}$ of $f$ is $E$-equivalent to $f$. An important example of an $E$-equivalence is a $\mathcal{K}$-equivalence. If $f$ and $g$ are $\mathcal{K}$-equivalent, then there is a diffeomorphism of source spaces of $f$ and $g$ that takes $f^{-1}(0)$ to $g^{-1}(0)$.

\begin{proof} 
Suppose that $f$ is infinitely determined. Then it is infinitely $\mathcal{K}$-determined. Then, by the Brodersen theorem \cite{Br81}, it is finitely $\mathcal{K}$-determined. In particular, the map germ $f$ is $\mathcal{K}$-equivalent to a polynomial. Thus, there is a diffeomorphism of the source spaces of $f$ and $g$ that takes $f^{-1}(0)$ to an algebraic set $g^{-1}(0)$. 
\end{proof}

\iffalse
Let $\mathcal{G}$-equivalence stands for $\mathcal{R}$, $\mathcal{C}$, or $\mathcal{K}$-equivalence. 
Let $J_f$ denote the ideal in $\mathcal{E}_n$ generated by $p\times p$ minors of $df$. We say that a finitely generated ideal $I$ in $\mathcal{E}_n$ is \emph{elliptic} if it contains $m_n^\infty$, where $m_n$ is the maximal ideal in $\mathcal{E}_n$. 
Put
\[
     I_\mathcal{R}=J_f, \quad I_\mathcal{C}(f)=f^*m_p\cdot  \mathcal{E}_n, \quad I_\mathcal{K}(f)=I_\mathcal{R}(f)+I_\mathcal{C}(f). 
\]

Let $N_\mathcal{R}(f)(x)$ denote the sum of squares of $p\times p$ minors of $df(x)$. Define 
\[ 
   N_\mathcal{C}(f)(x)=||f(x)||^2, 
\]
and put $N_\mathcal{K}(f)(x)=N_\mathcal{C}(f)(x)+N_\mathcal{R}(f)(x)$.

\begin{theorem} The following conditions are equivalent:
\begin{itemize}
\item A map germ $f$ is infinitely $\mathcal{G}$-determined.
\item $I_\mathcal{G}(f)$ is elliptic.
\item $N_\mathcal{G}(f)(x)\ge C||x||^\alpha$ holds on some neighborhood of $0$ for some constants $C$ and $\alpha>0$. 
 \end{itemize} 
\end{theorem}

\fi

\section{The Takens functions}\label{s:5Tak}

In \cite{Ta68}, Takens constructed functions on manifolds of dimension $\ge 3$ with non cone-like critical points. In this section we study Takens functions, and show that the critical points of these functions still admit cylindrical neighborhoods diffeomorphic to a ball. 

Given a smooth manifold $X$ with corners and a smooth manifold $Y$ with smooth boundary, a map $f\co X\to Y$ as well as its inverse is said to be a \emph{special almost diffeomorphism} if it is continuous and restricts to a diffeomorphism $X\setminus \Sigma\to Y\setminus f(\Sigma)$ where $\Sigma$ is the set of points of the boundary of $X$ at which $\partial X$ is not smooth. An \emph{almost diffeomorphism} of manifolds with corners is a composition of special almost diffeomorphisms.

The Takens construction is based on the existence for $n\ge 4$ of a set $Q\subset \R^{n-1}\subset \R^n$ such that 

\begin{itemize}
\item there exists a continuous map $\tau\co \R^n\to \R^n$ with $\tau(Q)=0$ such that the restriction $\tau|\R^n\setminus Q$ is a diffeomorphism onto $\R^n\setminus \{0\}$, and
\item there is no compact neighborhood $U$ of $Q$ in $\R^{n-1}$ such that $U\setminus Q$ is homeomorphic with $\partial U\times (0, 1]$.  
\end{itemize}

When $n=4$, we may choose $Q\subset \R^3\subset \R^4$ to be the Fox-Artin arc, which is an embedded segment in $\R^3$ with non-trivial fundamental group $\pi_1(\R^3\setminus Q)$. When $n\ge 5$, the space $Q\subset \R^{n-1}\subset \R^n$ can be constructed by means of the Newman-Mazur compact contractible manifold $M$ of dimension $n-1$ with non-trivial $\pi_1(\partial M)$. 
The Newman-Mazur compact manifold $M$ has the properties that
\begin{itemize}
\item $M\times [0,1]$ is almost diffeomorphic to a round ball,
\item the double of $M$ is diffeomorphic to $S^n$, and \item the disc $D^{n-1}$ can be decomposed into a union $A_1\cup A_2$ of two manifolds with corners each of which is almost diffeomorphic to the Newman-Mazur manifold $M$. 
\end{itemize}
Starting with a disc $B_0$ of dimension $n-1$, Takens defines a sequence of compact $(n-1)$-manifolds $B_i$, where $B_{i+1}$ is obtained from $B_i$ by removing from $B_i$ a collar  
neighborhood of $B_i$ as well as a subset diffeomorphic to $A_1$ so that $B_{i+1}$ is diffeomorphic to the boundary connected sum $B_i\#_b A_2$, which is almost diffeomorphic to $B_i\#_b M$. Then $Q=\cap B_i$. 
It follows that $Q$ is a compact subset of $\R^{n-1}$. 

To construct a smooth function $f\co \R^n\to \R$ with an isolated non cone-like critical point $0$, we first construct a smooth function $F\co \R^n\to \R$ with $F^{-1}(0)=\R^{n-1}$ and $\partial F/\partial x_n\equiv 0$ precisely over $Q$. Then $f$ is defined by $F=f\circ \tau$. 

The function $F$ in \cite{Tak} is of the form 
\[
      F(x_1,..., x_n)=\lambda(x_1,..., x_{n-1})\cdot x_n + h(x_n), 
\]
where $\lambda(x_1,..., x_{n-1})$ and $h(x_n)$ are certain smooth functions such that $\lambda> 0$ over the complement to $Q$, $h'\ge 0$, and $h(0)=0$. Namely, let $\{Q_i\}$, with $i=1,2,...$, be a basis of compact neighborhoods of $Q$ in $\R^{n-1}$ such that $Q_i$ is in the interior of $Q_{i-1}$. Let
$\{\lambda_i\}$ denote the partition of unity of $\R^{n-1}\setminus Q$ with support of $\lambda_1$ in the complement of $Q_2$, and support of all other functions $\lambda_i$ in $Q_{i-1}\setminus Q_{i+1}$. Then $\lambda=\sum a_i\lambda_i$, where 
the strictly positive constants $a_i$ are chosen so that 
\iffalse
$a_i$ is less than the minimum of $1$ and 
\[
  \frac{||x\circ{\tau}^{-1}||^{2}} {2^{i}||(\lambda_i\cdot x_n)\circ \tau^{-1}||_{C^i} }, 
\]
where $||x\circ{\tau^{-1}}||^2=x_1^2\circ{\tau}^{-1}+\cdots + x_n^2\circ{\tau}^{-1}$, and $||g||_{C^i}$ is the maximum of values of derivatives of $g$ of orders at most $i$ over the compact support of the smooth function $g$ restricted to the disc $||x||\le 1$. 
Therefore, 
\fi
the series $\sum (\lambda_i\cdot x_n)\circ \tau^{-1}$ converges to a smooth function $(\lambda\cdot x_n)\circ \tau^{-1}$ whose derivatives of any order are obtained by term by term differentiating the series. 

Let $\{h_j\}$ denote a sequence of functions on $\R^1$ parametrized by $j=1,2, ...$ such that $h_j(x)=x$ if $|x|\ge 1/i$, $h_i(x)=0$ if $|x|<1/(i+1)$ and $h'_j(x)\ge 0$.  
As above, we define a function $h(x_n)=\sum b_j h_j\circ x_n\circ {\tau^{-1}}$, where the strictly positive constants $b_j$ are chosen so that the function $h$ is smooth and its derivatives are obtained by term by term differentiating the series. 
\iffalse
 $b_j$ is less than the minimum of $1$ and 
\[
   \frac{||x||^2}{2^j||h_j\circ x_n\circ \tau^{-1}||_{C^i}}. 
\]
\fi 

We immediately deduce the following proposition. 

\begin{proposition}\label{p:23Tak}  The Takens function germs $f$ are flat, i.e., the derivatives of $f$ of any order at $0$ are trivial. 
\end{proposition}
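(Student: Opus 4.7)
The plan is to exhibit $f$ near the critical point $0 = \tau(Q)$ as a locally uniform $C^\infty$ limit of smooth functions each of which vanishes identically on a neighborhood of $0$, and then conclude flatness by term-by-term differentiation. Concretely, writing $f = F\circ\tau^{-1}$, one has
\[
f \;=\; \sum_{i} a_{i}\,(\lambda_{i}\cdot x_{n})\circ\tau^{-1} \;+\; \sum_{j} b_{j}\,(h_{j}\circ x_{n})\circ\tau^{-1},
\]
where each summand is a priori defined only on $\R^{n}\setminus\{0\}$ via the diffeomorphism $\tau|\R^n\setminus Q$ and is extended by $0$ at the origin. I will denote these summands by $g_{i}$ and $\tilde g_{j}$ respectively.

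The first step I would carry out is to show that every $g_{i}$ and every $\tilde g_{j}$ vanishes on some open neighborhood of $0$, so in particular is genuinely smooth on $\R^{n}$ with all partial derivatives at $0$ equal to zero. Since $\mathrm{supp}(\lambda_{i})\subset \R^{n-1}$ is compact and disjoint from $Q$, it lies at a positive distance $d_{i}>0$ from $Q$. Provided $\tau$ is taken (as it may be) to be the identity outside a large ball, $\tau$ is proper, and a brief contradiction argument shows $\tau^{-1}(y)\to Q$ as $y\to 0$: any subsequential limit $x^{*}\notin Q$ of $\tau^{-1}(y_{k})$ for $y_{k}\to 0$ would give the contradiction $\tau(x^{*})=\lim\tau(\tau^{-1}(y_k))=0$ with $x^{*}\notin\tau^{-1}(0)=Q$. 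Since the projection $\R^{n}\to\R^{n-1}$ is distance non-increasing, the first $n-1$ coordinates of $\tau^{-1}(y)$ lie outside $\mathrm{supp}(\lambda_{i})$ once $\|y\|$ is small, forcing $g_{i}(y)=0$. The argument for $\tilde g_{j}$ is analogous: $h_{j}$ vanishes identically on $|t|<1/(j+1)$, and $x_{n}\circ\tau^{-1}(y)\to 0$ as $y\to 0$.

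The second step is to invoke the very design of Takens' coefficients: the constants $a_{i}$ and $b_{j}$ are chosen (as explicitly noted in the excerpt) so that for every integer $k$ the series $\sum_{i} a_{i}\|g_{i}\|_{C^{k}}$ and $\sum_{j} b_{j}\|\tilde g_{j}\|_{C^{k}}$ converge. Consequently the derivatives of $f$ can be computed termwise, and since each $g_{i}$ and $\tilde g_{j}$ is flat at $0$ by the first step,
\[
\partial^{\alpha}f(0) \;=\; \sum_{i} a_{i}\,\partial^{\alpha}g_{i}(0) \;+\; \sum_{j} b_{j}\,\partial^{\alpha}\tilde g_{j}(0) \;=\; 0
\]
for every multi-index $\alpha$, which is exactly flatness. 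The only delicate point is the verification that $\tau^{-1}(y)\to Q$ as $y\to 0$, i.e.\ the properness of $\tau$; everything else is a routine consequence of $C^{\infty}$ convergence of the defining series.
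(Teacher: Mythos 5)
Your proof is correct and is essentially the argument the paper treats as immediate from the construction: each summand $(\lambda_i\cdot x_n)\circ\tau^{-1}$ and $(h_j\circ x_n)\circ\tau^{-1}$ vanishes identically on a neighborhood of $0$ because $\tau^{-1}(y)$ accumulates only on $Q$ as $y\to 0$, and the constants $a_i$, $b_j$ are chosen precisely so that the series can be differentiated term by term. The properness of $\tau$ that you flag as the one delicate point does hold in Takens' construction ($\tau$ is the identity outside a compact set), so there is no gap.
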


\begin{proposition}\label{prop:21} The critical points of Takens functions admit cylindrical neighborhoods diffeomorphic to a ball. 
\end{proposition}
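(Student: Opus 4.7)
The plan is to lift via the map $\tau$: the critical set of $F=f\circ\tau$ on the source copy of $\R^n$ is the Takens compactum $Q\subset\R^{n-1}$, and the explicit formula $F(x)=\lambda(x_1,\dots,x_{n-1})x_n+h(x_n)$ makes the gradient flow of $F$ tractable. I will construct a cylindrical neighborhood $\tilde H$ of $Q$ for $F$ that is almost diffeomorphic to an $n$-ball, then push forward via $\tau$ to obtain a cylindrical ball neighborhood $H=\tau(\tilde H)$ of the critical point $0$ of $f$.

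First I would equip the source $\R^n$ with the Euclidean metric; direct computation gives $\nabla F=\sum_{i<n}(\partial_{x_i}\lambda)\,x_n\,\partial_{x_i}+(\lambda+h'(x_n))\,\partial_{x_n}$, which vanishes exactly on $Q$, and for small $\varepsilon>0$ gradient trajectories in $F^{-1}[-\varepsilon,\varepsilon]$ are bounded and cross $\R^{n-1}$ transversely away from $Q$. Push this Euclidean metric forward by $\tau$ and extend arbitrarily across $0$ to a smooth Riemannian metric $g$ on the target, so that $\nabla_g f$ matches the push-forward of $\nabla F$ off $0$. In analogy with Section~\ref{s:pt1}, define $\tilde H$ as $Q$ together with all points in $F^{-1}[-\varepsilon,\varepsilon]$ lying on gradient trajectories of $F$ that meet $B_i\subset\R^{n-1}$; an argument parallel to Section~\ref{s:pt1} shows that $\tilde H$ is a compact smooth submanifold with corners and is cylindrical for $F$. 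Gradient-flow parametrization yields a continuous bijection $\Phi\co B_i\times[-\varepsilon,\varepsilon]\to\tilde H$, diffeomorphic off $Q\times[-\varepsilon,\varepsilon]$ and given on the remaining locus by $\Phi(q,s)=(q_1,\dots,q_{n-1},h^{-1}(s))$ because near $Q$ the gradient is vertical to leading order and the $F=s$ level meets the vertical line through $q$ at height $h^{-1}(s)$. By the Newman-Mazur property recalled at the start of Section~\ref{s:5Tak} and the almost-identification $B_i\cong B_0\,\#_b\,A_2\,\#_b\cdots\,\#_b\,A_2$, the product $B_i\times[-\varepsilon,\varepsilon]$ is a boundary connected sum of almost-balls and hence almost diffeomorphic to an $n$-ball; combining this with $\Phi$ one transports an almost-ball structure to $\tilde H$.

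Since $\tau$ collapses $Q\times\{0\}\subset\tilde H$ to $0$ and is a diffeomorphism elsewhere, $H=\tau(\tilde H)$ is the quotient $\tilde H/(Q\times\{0\})$. The subset $Q\times\{0\}$ is cellular in $\tilde H$: for any decreasing sequence $\varepsilon_j\to 0$ the nested $n$-balls $B_j\times[-\varepsilon_j,\varepsilon_j]\subset\mathrm{int}(\tilde H)$ satisfy $\bigcap_j(B_j\times[-\varepsilon_j,\varepsilon_j])=Q\times\{0\}$, since $Q=\bigcap_j B_j$ by Takens' construction. Collapsing a cellular subset in an $n$-ball produces a homeomorphic $n$-ball, so $H$ is homeomorphic to a ball; and since $\partial H=\tau(\partial\tilde H)$ is smoothly embedded (as $\tau|\partial\tilde H$ is a diffeomorphism) and $H$ inherits a smooth structure from the ambient target $\R^n$, $H$ is a smooth manifold with corners diffeomorphic to a ball. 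I expect the main obstacle to be verifying smoothness and cylindricity of $\tilde H$ across $Q$: gradient trajectories through feet $p\in B_i$ approaching $Q$ have bounded spatial extent but ever-vanishing gradient speed, so showing they assemble into a smooth submanifold requires careful estimates on the derivatives of $\lambda$ and $h$ near their zero sets, and upgrading the topological ball structure of $H$ to a diffeomorphism with a smooth ball in dimensions $4,5$ relies on the almost-diffeomorphism output of the Newman-Mazur construction rather than on homeomorphism alone.
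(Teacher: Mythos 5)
Your overall strategy coincides with the paper's: pass upstairs to $F=f\circ\tau$, assemble a neighborhood of $Q$ out of gradient trajectories in $F^{-1}[-\varepsilon,\varepsilon]$, identify it with a product cylinder, and then take the quotient by $Q$. Two choices genuinely differ. You build the cylinder over the Takens manifold $B_i$ and therefore need the Newman--Mazur product property (and compatibility of $\#_b$ with taking products with an interval) to see that $B_i\times[-\varepsilon,\varepsilon]$ is an almost-ball; the paper instead takes a round Euclidean ball $B_{r}\subset\R^{n-1}$ containing $Q$ in its interior, so that its model $C_\varepsilon(B_{r})\cong B_{r}\times[-\varepsilon,\varepsilon]$ is a standard ball with corners for free. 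Your cellularity argument for the quotient ($Q\times\{0\}$ is the intersection of the nested $n$-cells $B_j\times[-\varepsilon_j,\varepsilon_j]$, and collapsing a cellular subset of a ball yields a ball) is a clean route to the \emph{homeomorphism} type of $H=\tau(\tilde H)$.

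There are, however, two genuine gaps. First, your identification $\Phi\co B_i\times[-\varepsilon,\varepsilon]\to\tilde H$ is only exhibited as a continuous bijection that is a diffeomorphism off $Q\times[-\varepsilon,\varepsilon]$; this neither shows that $\tilde H$ is a smooth submanifold with corners nor that it is (almost) diffeomorphic to the product, since the flow parametrization degenerates as the feet approach $Q$ (the gradient vanishes there and trajectory times blow up, exactly the difficulty you flag but do not resolve). This is precisely what Lemma~\ref{l:cr32} supplies: the gradient field is isotoped, keeping it fixed near $Q$, to a field that is vertical away from a smaller neighborhood of $Q$, and the resulting flows give an honest diffeomorphism onto the rectangular cylinder. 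Second, the proposition asks for a neighborhood \emph{diffeomorphic} to a ball, and your cellular-quotient argument only yields a homeomorphism; for $n\ge 6$ that suffices, but for $n=4,5$ it does not, and the almost-ball structure you built on $\tilde H$ does not transport to $H$ because $\tau$ fails to be smooth along $Q$. The paper handles $n=5$ by applying Milnor's Proposition C to $\partial(H/Q)\cong\partial H$, and $n=4$ by comparing $H/Q$ with $H/I$ for a standard segment $I$. A minor further point: as literally defined, your $\tilde H$ omits points lying on trajectories that limit to $Q$ without crossing $F^{-1}(0)$; these must be included (as in the paper's definition of $H(r,\varepsilon)$) for $\tilde H$ to be closed and genuinely cylindrical.
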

\begin{proof}
To begin with let us show that the critical points of Takens functions admit cylindrical ball neighborhoods. 

Since $F|Q\equiv 0$, and $\partial F/\partial x_n>0$ over the complement to $Q$, the implicit function theorem implies that for each $\varepsilon\ne 0$ there is a function $g_\varepsilon(x_1, ..., x_{n-1})$ such that $F(x_1,..., x_{n-1}, g_\varepsilon(x_1,..., x_{n-1}))=\varepsilon$. In other words, the level set $F(x_1,..., x_n)=\varepsilon$ is the graph of the function $x_n=g_\varepsilon(x_1,..., x_{n-1})$. 
Furthermore, let $B_r$ denote a closed ball in $\R^{n-1}$ centered at $0$ of radius $r$.  Then the absolute value of the restriction $g_\varepsilon|B_r$ is bounded by a constant $c_\varepsilon$ such that $c_\varepsilon\to 0$ as $\varepsilon\to 0$. 

For any subset $X$ of $\R^{n-1}$, let $C_\varepsilon(X)$ denote the compact rectangular cylindrical subset of $\R^n$ over $X$ bounded by the graphs of the functions $g_{-\varepsilon}|X$ and $g_{\varepsilon}|X$. In other words, the set $C_\varepsilon(X)$ consists of points $(x_1,..., x_{n})$ such that $(x_1,..., x_{n-1})$ is a point in $X$ and 
\[
   g_{-\varepsilon}(x_1,..., x_{n-1})\le x_n \le g_{\varepsilon}(x_1,..., x_n). 
\] 
Then $C_\varepsilon(X)$ is diffeomorphic to the cylinder $X\times [-\varepsilon, \varepsilon]$.   

Suppose that $B_r$ contains the compact set $Q$ in its interior. Let $H(r, \varepsilon)$ denote a neighborhood of $Q$ that consists of all trajectories of the gradient vector field of $F$ in $F^{-1}[-\varepsilon, \varepsilon]$ that either contain a point in $B_r$ or has a limit point in $Q$.

\begin{lemma}\label{l:cr32} There is a number $r_3>0$ such that the cylindrical neighborhood $H(r_3, \varepsilon)$ is diffeomorphic to the rectangular cylinder $C_\varepsilon(B_{r_3})$. 
\end{lemma}
\begin{proof}
Let $r_0<r_1 < r_2<r_3<r_4$ be numbers such that $B_{r_0}$ contains $Q$ in its interior, and 
$H(r_i, \varepsilon)\subset C_\varepsilon(B_{r_{i+1}})$ for $i=0,..., 3$. 
There is a smooth homotopy $v_t$ parametrized by $t\in [0,1]$ of the gradient vector field $v_0$ of $F$ to a vector field $v_1$ such that $v_t$ is constant over $H(r_0, \varepsilon)$, $v_t\cdot \partial/\partial x_n>0$ over $C_\varepsilon(B_{r_3})\setminus H(r_0, \varepsilon)$ for all $t$, and $v_1$ is vertical up over $C_{\varepsilon}(B_{r_4}\setminus B_{r_2})$. 

Given a point $x$ in $\R^{n-1}\setminus Q$, there is a unique trajectory $\gamma_x$ of the vector filed $v_t$ passing through $x$. Given $s>0$, let $y\in \R^n$ denote the point in $\R^n$ on the trajectory $\gamma_x$ such that $F(y)=s$. Then $j_{t, s}\co \R^{n-1}\setminus Q\to \R^n$ defined by $x\mapsto y$ is a smooth map that takes $B_{r_3}\setminus Q$ to the upper horizontal part $\{F\equiv s\}\cap H(r_3, s)$ of the boundary of $H(r_3, s)$.

Then there is  a smooth isotopy $\varphi_t$ of a neighborhood $H(r_3, \varepsilon)$ of $Q$ to  $C_\varepsilon(B_{r_3})$ defined by 
\[
\varphi_t(x_1,..., x_n) = j_{t, F(x_n)} (j_{0, F(x_n)}^{-1}(x_1,..., x_n)).
\]
We claim that $\varphi_1$ is a diffeomorphism onto $C_\varepsilon(B_{r_3})$. Indeed, let $(x_1,..., x_{n})$ be a point in $C_\varepsilon(B_{r_3})$ that is not on the trajectories of $v_1$ with limit points on $Q$. 
Then the point $(x_1,..., x_{n})$  is the image of the point 
 \[
     j_{0, F(x_n)}\circ j_{1, F(x_n)}^{-1}(x_1,..., x_n).
 \]
On the other hand, if $(x_1,..., x_n)$ is on a trajectory of $v_1$ with a limit point on $Q$, then it is in $H(r_0, \varepsilon)$ where the isotopy $v_t$ is constant. Therefore, the point $(x_1,..., x_n)$ in this case is the image of itself.
\end{proof}  

Since $H=H(r_3, \varepsilon)$ is diffeomorphic to $C_\varepsilon(B_{r_3})$, we conclude that $H$ is a ball with corners $B_{r_3}\times [-\varepsilon, \varepsilon]$. Consequently, $H/Q$ is a cylindrical ball neighborhood of the critical point of $f$. 

 Now let us show that the cylindrical neighborhood $H/Q$ is not only homeomorphic to a ball, but it is almost diffeomorphic to a ball. Indeed, if $n\ne 4, 5$, then every smooth manifold homeomorphic to a ball is diffeomorphic to ball. Suppose that $n=5$.  Since the boundary of $H/Q$ is diffeomorphic to the boundary of $H$, which is almost diffeomorphic to a sphere, we conclude that $H/Q$ is almost diffeomorphic to a ball, see \cite[Proposition C]{Mi65}. Suppose now that $n=4$. Then there is an ambient  isotopy of $H$ that is trivial near the boundary, and that takes the wild arc $Q$ into the unknotted standard segment $I$. Consequently, the manifold $H/Q$ is diffeomorphic to the manifold $H/I$, which is almost diffeomorphic to a smooth $4$-ball. 
\end{proof}

\begin{proof}[Proof of Theorem~\ref{th:Tak}] Since there is no compact neighborhood $U$ of $Q$ in $\R^{n-1}$ such that $U\setminus Q$ is homeomorphic with $\partial U\times (0, 1]$,  
 the critical point of a Takens function is not cone-like. On the other hand, by Proposition~\ref{prop:21}, such a critical point admits a cylindrical neighborhood diffeomorphic to a ball. 
\end{proof} 

\begin{corollary}\label{c:Tak26}  The critical points of Takens functions are removable, i.e., for any neighborhood $U$ of a critical point $x_0$ of a Takens function $f$, there is a smooth function $g$ such that $g\equiv f$ outside of $U$ and $g$ has no critical points in $U$. 
\end{corollary}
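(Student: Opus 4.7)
The plan is to replace $f$ on a cylindrical ball neighborhood $H\subset U$ of $x_0$ by a projection-type function without critical points, smoothly matched to $f$ in a collar of $\partial H$. The feasibility of this replacement for Takens critical points (in contrast to, say, Morse saddles of intermediate index) rests on the ``trivial'' boundary structure of $H$ supplied by Proposition~\ref{prop:21}: both horizontal faces $H\cap M_{\pm\varepsilon}$ are smooth $(n-1)$-discs on which $f\equiv\pm\varepsilon$, while the vertical face is diffeomorphic to $\partial D^{n-1}\times[-\varepsilon,\varepsilon]$ with $f$ equal to the projection onto the interval. Indeed, under the almost diffeomorphism $H\cong C_\varepsilon(B_{r_3})$ of Lemma~\ref{l:cr32}, the horizontal faces are the graphs of $g_{\pm\varepsilon}$ over $B_{r_3}$, and $\tau$ restricts to a diffeomorphism on a neighborhood of these graphs since $Q$ lies in the interior of $C_\varepsilon(B_{r_3})$ (at $x_n=0$).

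First I would shrink $r_3$ and $\varepsilon$ so that $H\subset U$. Then pick a thin collar $\mathcal{C}$ of $\partial H$ in $H$ disjoint from $x_0$, so that $f|\mathcal{C}$ has no critical points. Using the gradient flow of $f$ on $\mathcal{C}$ together with the boundary description above, I would construct a diffeomorphism $\Psi\colon\mathcal{C}\to\mathcal{C}'$ onto a collar $\mathcal{C}'$ of $\partial(D^{n-1}\times[-\varepsilon,\varepsilon])$ satisfying $\pi\circ\Psi=f$, where $\pi\colon D^{n-1}\times[-\varepsilon,\varepsilon]\to[-\varepsilon,\varepsilon]$ is the projection onto the second factor. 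The absence of critical points of $f$ on $\mathcal{C}$ is what makes this straightening possible: the gradient trajectories foliate $\mathcal{C}$ and can be aligned with the vertical lines of the model cylinder. Then extend $\Psi$ to an almost diffeomorphism $\Phi\colon H\to D^{n-1}\times[-\varepsilon,\varepsilon]$, which exists because $H$ is almost diffeomorphic to the target ball.

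Finally, define $g=f$ outside $H$ and $g=\pi\circ\Phi$ on $H$. Since $\pi\circ\Phi=f$ on $\mathcal{C}$, the two definitions agree on an open neighborhood of $\partial H$, so $g$ is smooth on all of $\R^n$. The projection $\pi$ is everywhere regular, so $g$ has no critical points on $H$; outside $H$, $g=f$ has no critical points in $U\setminus H$ because $x_0\in H$ is the unique critical point of $f$ in $U$. The main obstacle is the construction of $\Psi$ with the alignment $\pi\circ\Psi=f$ on $\mathcal{C}$, together with its extension over the interior of $H$. Note that the level set $f^{-1}(0)\cap H$ is singular at $x_0$ while $\pi^{-1}(0)$ is a disc, so $\pi\circ\Phi$ will necessarily differ from $f$ in the interior of $H$; only the alignment on $\mathcal{C}$ is needed to produce a smooth $g$ with the required properties.
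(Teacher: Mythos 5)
Your argument follows the paper's proof essentially verbatim: both shrink the cylindrical ball neighborhood (your $H$, the paper's $H/Q$) into $U$, invoke Lemma~\ref{l:cr32} to see it is a standard cylinder $B_{r_3}\times[-\varepsilon,\varepsilon]$ whose horizontal faces are standard discs with $f\equiv\pm\varepsilon$ and whose vertical face carries $f$ as a regular projection, and then replace $f$ there by a regular projection-type function agreeing with $f$ near the boundary. The one step you leave informal --- extending the collar identification $\Psi$ to an almost diffeomorphism $\Phi$ of all of $H$ --- is exactly the step the paper also only asserts (the existence of $f'$), so the two arguments match in substance and in level of detail.
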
 
\begin{proof} In the proof of Proposition~\ref{prop:21} we may choose $\varepsilon, r_0, r_1, r_2, r_3,$ and $r_4$ so small that $H/Q\subset U$. We note that the boundary of the ball $H/Q$ is diffeomorphic to the boundary of the ball $H$, and the function $f$ near $\partial (H/Q)$ can be identified with the function $F$ near $\partial H$. On the other hand, the boundary $\partial H$ consists of the upper and lower horizontal parts $\partial_hH=\partial H\cap \{F=\pm \varepsilon\}$, and the vertical part $\partial_vH=\overline{\partial H\setminus \partial_hH}$ such that $F$ is constant over each component of $\partial_hH$ and  non-singular over $\partial_vH$.  By Lemma~\ref{l:cr32}, the ball $H$ is a cylinder over a standard ball in $\R^{n-1}$. Therefore, the upper and lower horizontal parts of $\partial_hH\cong \partial_h(H/Q)$  are also standard balls. Therefore, there is a function $f'$ on $H/Q$ such that $f'$ agrees with $f$ near $\partial (H/Q)=\partial H$ and $f'$ has no critical points. Finally, we define $g$ to be a function on $\R^{n}$ such that $g\equiv f'$ over $H/Q$ and $g\equiv f$ otherwise.  
\end{proof}

\section{Singhof-Takens fillings}\label{s:stf2}

A continuous map $f\co X\to Y$ of a subset $X\subset \R^n$ to a subset $Y\subset \R^n$ is said to be \emph{smooth} if it extends to a smooth map from an open neighborhood of $X$ to an open neighborhood of $Y$. A smooth map $f\co X\to Y$ of subsets of $\R^n$ is a \emph{diffeomorphism} if it is a homeomorphism and its inverse is smooth.

Let $Q$ be a topological space. A \emph{coordinate $n$-chart} $(U, \varphi)$ on $Q$ consists of an open subset $U$ of $[0, \infty)^k\times \R^{n-k}$ for some $k\in \{0, ..., n\}$ and 
a homeomorphism $\varphi\co U \to Q$ onto the image. An  \emph{$n$-atlas} on $Q$ is a maximal collection $\{(U_\alpha, \varphi_\alpha)\}$ of $n$-charts  on $Q$ such that  $Q=\cup \varphi_\alpha(U_\alpha)$ and 
the transition maps
\[
     \varphi_{\beta}^{-1}\circ \varphi_{\alpha}\co \varphi_{\alpha}^{-1}(\varphi_\alpha(U_\alpha)\cap \varphi_{\beta}(U_\beta))\longrightarrow  \varphi_{\beta}^{-1}(\varphi_\alpha(U_\alpha)\cap \varphi_{\beta}(U_\beta))
\]
are diffeomorphisms for all $\alpha$ and $\beta$. A  second-countable Hausdorff space $Q$ with a maximal $n$-atlas is said to be a \emph{manifold with corners} of dimension $n$ if $k>1$. If a point $x$ in a manifold with corners belongs to the image $\varphi(U)$ of a chart of the form $U\subset [0,\infty)^k\times \R^{n-k}$, then we say that the order of the corner at $x$ is at most $k$. It follows that the set of points of order $k$ in a manifold with corners $Q$ is a smooth manifold itself. We call this smooth manifold a \emph{face} of dimension $n-k$. In particular, the face of dimension $n$ is the interior of $Q$, while all other faces are parts of the boundary $\partial Q$.  Faces of dimension less than $n$ form a \emph{canonical} stratification of the boundary $\partial Q$.

Let $Q$ be a manifold with corners. Let $\Sigma\subset \partial Q$ denote the set of points of the boundary at which the boundary is not smooth. Suppose that $Q'$ is a smooth manifold with (smooth) boundary.  Let $f\co Q\to Q'$ be a homeomorphism which restricts to a diffeomorphism $Q\setminus \Sigma\to Q'\setminus f(\Sigma)$. Then we say that $f$ as well as $f^{-1}$ is a \emph{special almost diffeomorphism}. In particular, every diffeomorphism is a special almost diffeomorphism.  %An \emph{almost diffeomoprhism} of manifolds with corners is a composition of special almost diffeomorphisms and diffeomorphisms. 

A filling of a smooth manifold $M$ is a certain decomposition of $M$ into manifolds with corners $Q_1, ..., Q_N$. The number $N$ is said to be the \emph{order} of the filling. Fillings of order $3$ of closed manifolds were introduced by Takens in \cite{Ta68}, and later the notion was extended to that of arbitrary order by Singhof~\cite{Si79}. 

\begin{definition}\label{Filling}
Let $M$ be a closed smooth manifold of dimension $n$. A \emph{Singhof-Takens filling} of $M$ is a family of compact codimension $0$ submanifolds with corners $Q_1,..., Q_N$ of $M$  such that the following conditions are satisfied.
\begin{itemize}
\item[P1:] $M=Q_1\cup \cdots \cup Q_N$, i.e, $\{Q_i\}$ is a covering of $M$ by compact subsets.
\item[P2:] The interiors of submanifolds $Q_i$ and $Q_j$ are disjoint for all $i\ne j$.
\item[P3:] Given a point $z\in M$, let $k_1<\cdots <k_\nu$ be the indices $i$ of submanifolds $Q_i$ containing $z$ such that $\nu\le n+1$. If $\nu\ge 2$, then we require that there is a coordinate
$n$-chart $D$ of dimension $n$ about $z$ in $M$ such that for each $j<\nu$
\iffalse
\[
    D\cap Q_{k_1}=\{x_1\le 0\},
\]
\[   
    D\cap Q_{k_2}=\{x_1\ge 0, x_2\le 0\},
\]
\[
 \cdots
\]
\[
   D\cap Q_{k_{\nu-1}}=\{x_1\ge 0, ..., x_{\nu-2}\ge 0, x_{\nu-1}\le 0\},  
\]
\[
   D\cap Q_{k_{\nu}}=\{x_1\ge 0, ..., x_{\nu-1}\ge 0\}. 
\]
\fi
\[
   D\cap Q_{k_{j}}=\{x_1\ge 0, ..., x_{j-1}\ge 0, x_{j}\le 0\},  
\]
and
\[
   D\cap Q_{k_{\nu}}=\{x_1\ge 0, ..., x_{\nu-1}\ge 0\}. 
\]
 \end{itemize} 
We say that $D$ is a \emph{special} coordinate chart about the point $z$ with respect to the filling $\{Q_i\}$. 
 \end{definition}
 
 We will refer to Singhof-Takens fillings as fillings for short. 
 In this paper we will consider only smooth fillings, i.e., fillings by smooth submanifolds $Q_i$ with corners.

 \begin{remark} The submanifolds $Q_i$ in a filling $\{Q_i\}$ are ordered. In fact,  the order of submanifolds $Q_i$ is essential. In particular, if $Q_1,..., Q_N$ is a filling of a manifold $M$, then $Q_{\sigma(1)}, ..., Q_{\sigma(N)}$ may not be a filling, where $\sigma$ is a permutation of $N$ elements. %Furthermore, the boundary of $Q_1$ is always smooth,  while the boundary of $Q_i$ may have corners of the form $\R^j_+\times \R^{n-j}$ for $j\le i$. 
\end{remark}

We say that a filling that consists of $N$ submanifolds $Q_i$ is \emph{categorical} if $N-1=\cat(M)$, and each submanifold $Q_i$ is contractible in $M$. 
It is known (e.g., see \cite[Proposition 1.10]{CLOT}) that if $X$ is a normal ANR, then the closed Lusternik-Schnirelmann category $\cat^{cl}X$ of $X$ agrees with the standard (open) Lusternik-Schnirel\-mann category, i.e., $\cat X =\cat^{cl}X$. Thus, if a manifold $M$ admits a filling of order $N$ by contractible submanifolds, then $N+1\ge \cat(M)$. On the other hand, every closed manifold admits a categorical filling, see \cite[Proposition 3.5]{Si79}, and \cite[Proposition 2.4]{Ta68}, and therefore the inequality is sharp for contractible fillings. 

We will also need the notion of a relative filling. 
Let $M$ be a compact manifold of dimension $n$ with a non-empty boundary $\partial M$. Then the union 
\[
\tilde M= M \sqcup (\partial M\times [0,1])/_{\partial M\sim \partial M\times \{0\}}
\]
has a unique smooth structure that agrees with the smooth structures on $M$ and $\partial M\times [0,1]$.

\begin{definition}\label{RelFilling}  Let $M$ be a compact manifold with boundary of dimension $n$. We say that a family $Q_1,..., Q_N$ of compact submanifolds with corners of $M$ is a \emph{relative filling} of $M$ if the decomposition $\tilde M=Q'_1\cup Q'_2\cup \cdots \cup Q'_N\cup Q'_{N+1}$, where $Q_i'=Q_{i-1}$ for $i=2,..., N+1$, and $Q'_{1}=\partial M \times [0,1]$, satisfies the properties $(P1)$ and $(P2)$ of Definition~\ref{Filling} as well as the property $(P3)$ for all points $z\in \tilde M\setminus \partial \tilde M$. 
We say that the relative filling $Q_1\cup\cdots \cup Q_N$ of $M$  is \emph{categorical} if $Q_i$ is contractible for $i=1,..., N$, and $N-1=\cat(M)$.
\end{definition}

\begin{remark}
We note that $Q'_{1}=\partial M \times [0,1]$ is not necessarily contractible.
\end{remark}

Many properties of relative fillings of compact manifolds with boundary are similar to the corresponding properties of fillings of closed manifolds. 
\begin{lemma} Every compact manifold $M$ admits a categorical filling. 
\end{lemma}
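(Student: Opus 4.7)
The plan is to reduce to the closed case covered by the cited theorem of Singhof~\cite{Si79} and Takens~\cite{Ta68}, and to upgrade that reduction so that the external collar $\partial M\times[0,1]$ of $\tilde M$ is built in as a preassigned piece $Q'_1$. If $\partial M=\emptyset$ the lemma is immediate, so assume $\partial M\ne\emptyset$, and put $k=\cat(M)$. Since $\tilde M$ deformation retracts onto $M$, $\cat(\tilde M)=k$ as well, and using that $\cat^{cl}=\cat$ for normal ANRs (cited above), I would begin with $k+1$ closed subsets $C_1,\ldots,C_{k+1}$ of $\tilde M$ covering $\tilde M$, each contractible in $\tilde M$.

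Next, fix a smooth interior collar $c\co\partial M\times[0,\varepsilon]\hookrightarrow M$ and modify the $C_i$ by a flow in the collar direction so that each $C_i\cap\mathrm{im}\,c$ becomes a product cylinder $c((C_i\cap\partial M)\times[0,\varepsilon])$; since this flow is a homotopy equivalence, contractibility of each $C_i$ in $\tilde M$ is preserved. I would then run the Singhof--Takens combinatorial thickening of the modified family $\{C_i\cap M\}$ inside $M$: triangulate $M$ subordinately to the cover, stratify simplices by the first $C_i$ that contains them, and thicken via iterated regular neighborhoods to produce compact contractible codimension-$0$ submanifolds with corners $Q_1,\ldots,Q_{k+1}\subset M$ with pairwise disjoint interiors and $\bigcup Q_i=M$, satisfying the corner condition (P3) at every $z\in\Int M$ and having product form $Q_i\cap\mathrm{im}\,c=c((Q_i\cap\partial M)\times[0,\varepsilon])$ along the collar.

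Finally, set $Q'_1=\partial M\times[0,1]$ and $Q'_{j+1}=Q_j$ for $j=1,\ldots,k+1$, and verify the conditions of Definition~\ref{RelFilling} for the family $\{Q'_i\}$ in $\tilde M$. Conditions (P1), (P2), and (P3) at interior points of $M$ are built in by construction. At a point $z\in\partial M$, choose a coordinate chart in which $\partial M$ is $\{x_1=0\}$ with $Q'_1$ on the side $\{x_1\le 0\}$; the product form arranged in the previous step shows that each $Q_j$ containing $z$ meets this chart in a region depending only on $x_2,\ldots,x_n$, so the Singhof--Takens corner model for the $Q_j$'s on $\partial M$ extends by product with $\{x_1\ge 0\}$ to the required special chart in $\tilde M$. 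The principal obstacle is carrying out the Singhof--Takens thickening compatibly with the preassigned collar form; I would handle this by performing the triangulation and thickening equivariantly along the collar direction on $\mathrm{im}\,c$, in the same spirit as Singhof's own treatment of boundary collars in the closed case.
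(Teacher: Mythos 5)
Your proposal is correct and follows essentially the same route as the paper, whose entire proof is a one-line deferral to Singhof's Proposition 3.5: you carry out the intended adaptation by arranging the categorical cover and the Singhof--Takens thickening to be of product form along a boundary collar, and then prepending $Q'_1=\partial M\times[0,1]$ so that property (P3) at points of $\partial M$ follows from the corner model on $\partial M$ crossed with $\{x_1\ge 0\}$. The only caveat is that the thickened pieces are naturally contractible \emph{in} $M$ rather than contractible as spaces, which is what the categorical-filling definition actually requires in the closed case and what one should read into the relative definition as well.
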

\begin{proof} The proof is similar to the one of  \cite[Proposition 3.5]{Si79}. 
\end{proof}

Suppose now that a manifold $M=M_1\cup M_2$ is obtained from two compact manifolds with boundary $M_1$ and $M_2$ by identifying some of their boundary components. In general, the union of a filling of $M_1$ and a filling of $M_2$ do not comprise a filling of $M$.  However, 
 the union of fillings of $M_1$ and $M_2$ do comprise a filling  of $M$ in certain cases.

 \begin{remark}\label{r:8} Let $M_1$ and $M_2$ be two compact manifolds with fillings. For $i=1,2$, let $\partial_iM$ denote a union of some of the path components of the boundary of $M_i$. Suppose that $\partial_1M$ is diffeomorphic to $\partial_2M$. Furthermore, suppose that the filling of $M_2$ is given by a decomposition $Q_1\cup \cdots \cup Q_N$ into submanifolds with corners such that if two points $x, y$ belong to the same path component of $\partial _2M$, then $x$ and $y$ belong to the same submanifold with corners $Q_i$. 
  Let $M$ be a manifold obtained from $M_1$ and $M_2$ by identifying $\partial_1M$ with $\partial_2M$. 
 Then the union of fillings of $M_1$ and $M_2$  comprise a filling of $M=M_1\cup M_2$ at least for some choice of indexing of the submanifolds with corners of $M_1\cup M_2$.  
 \end{remark}

\section{The Lusternik-Schnirelmann estimate by $\Bcat^\T$}\label{s:ls5}

Given a closed smooth manifold $M$, we recall that $\Bcat^\T(M)$ is the least number $n$ such that there is a Singhof-Takens filling of $M$ by $n+1$ topological balls. We also recall that $\Critb(M)$ is the least number such that there is a smooth function on $M$ with $\Critb(M)$ critical points each of which admits a cylindrical ball neighborhood.

In this section we will establish the Lusternick-Schnirelmann type inequality:
\[\Bcat^\T (M) + 1\le \Critb(M)\]

%better to add in thesis.
%A corner on a manifold $M$ is a point at which $\partial M$ is not smooth. In general, we can describe the neighborhood around any point as $[0,\infty)^k\times \R^{n-k}$. For example in $[0,1]\times D^2$, if our point is in the interior then $k=0$, if our point is on the interior of $\{0,1\}\times D^2$ then $k=1$, and if our point is on the boundary of $\{0,1\}\times D^2$ then $k=2$. We note that the definition of a filling requires the elements of the filling to be smooth manifolds with corners. So we need the filling of $M$ to be of smooth manifolds with corners that are homeomorphic to a ball.\\

\begin{lemma}\label{Kyle}
	Let $f:M\to \R$ be a smooth function on a closed manifold of dimension $n$. Suppose that $x_0$ is a critical point at which $f$ assumes a unique global minimum. Let $c$ be a number such that $f$ has no critical values in the interval $(f(x_0),c]$. Then the compact manifold $M_{(-\infty,a]}$ is homeomorphic to a disc for any $a\in(f(x_0),c)$.
\end{lemma}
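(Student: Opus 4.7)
The plan is to realize $N := M_{(-\infty, a]}$ as a topological cone on its boundary $K := f^{-1}(a)$ and then to identify $K$ as a sphere, so that $N$ becomes a cone on a sphere, hence a disc.

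First, I would fix any auxiliary Riemannian metric on $M$ and consider the flow $\phi_t$ of $-\nabla f$. Because $x_0$ is the only critical point in $N$ and $f \ge v := f(x_0)$ on $N$, each trajectory in $N$ either is constant at $x_0$ or strictly decreases $f$ and, by compactness and the absence of other critical points, converges to $x_0$ as $t\to\infty$. Parameterizing each trajectory from $y \in K$ by its $f$-value $s \in [v,a]$ yields a continuous map $\Phi\co K \times [v,a] \to N$ with $\Phi(y,a) = y$ and $\Phi(\cdot, v) := x_0$. Continuity at the collapsed edge $K \times \{v\}$ follows from $f^{-1}(v) = \{x_0\}$ together with the compactness of $M$, which force the level sets $f^{-1}(s)$ to shrink to $\{x_0\}$ in Hausdorff distance as $s \to v^+$. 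Hence $\Phi$ descends to a continuous bijection from the cone $C(K) := K \times [v,a]/(K \times \{v\})$ onto $N$, which is a homeomorphism by compactness.

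Next, since $(v,c)$ contains no critical values, standard Morse theory implies the sublevel sets $M_{(-\infty,a']}$ for different $a' \in (v,c)$ are mutually diffeomorphic, so after shrinking $a$ we may assume $N$ lies in a smooth Euclidean coordinate chart $\varphi\co U \to \R^n$ centered at $x_0$; there $N$ embeds as a compact smooth manifold with smooth boundary containing $0 = \varphi(x_0)$ in its interior. Because $x_0$ is an asymptotically stable equilibrium for $-\nabla f$, there exists a smooth Morse function $L$ on a neighborhood of $x_0$ with unique minimum at $x_0$ satisfying $\nabla L \cdot \nabla f > 0$ on the punctured neighborhood (supplied by a converse Lyapunov construction followed by a Morse perturbation; in the non-degenerate case one simply takes $L = |y|^2$ in normal coordinates). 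For a sufficiently small $\varepsilon > 0$, the sublevel set $\bar B := L^{-1}([0,\varepsilon])$ is a smooth $n$-ball with sphere boundary contained in the interior of $N$. On the cobordism $R := N \setminus \operatorname{int}(\bar B)$, the vector field $\nabla f$ is nowhere zero (no critical points lie in $R$), outward-transverse to $K = \partial N$ (since $a$ is regular), and outward-transverse to $\partial \bar B$ (by the Lyapunov inequality $\nabla L \cdot \nabla f > 0$). Integrating $\nabla f$ forward from $\partial \bar B$ then trivializes $R$ as $\partial \bar B \times [0,1]$, giving $K \cong \partial \bar B \cong S^{n-1}$.

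Combining the two steps yields $N \cong C(K) \cong C(S^{n-1}) \cong D^n$, proving the lemma. The principal technical point is the construction of the Morse Lyapunov function $L$ at a possibly non-Morse isolated minimum; this is delivered by standard converse Lyapunov theorems together with a density argument for Morse functions preserving the open condition $\nabla L \cdot \nabla f > 0$ on compact sets bounded away from $x_0$.
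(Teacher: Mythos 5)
Your first step --- exhibiting $M_{(-\infty,a]}$ as the cone on $K=f^{-1}(a)$ via the gradient flow --- is correct and is exactly the paper's starting point. The gap is in the second step, and it is fatal. You propose to identify $K$ with $S^{n-1}$ by producing a smooth Morse function $L$ near $x_0$ whose \emph{only} critical point is a nondegenerate minimum at $x_0$ and which satisfies $\nabla L\cdot\nabla f>0$ off $x_0$, so that $\bar B=L^{-1}([0,\varepsilon])$ is a standard ball and the region between $\partial\bar B$ and $K$ is a product cobordism. But the existence of such an $L$ is essentially equivalent to the statement being proved. A converse Lyapunov theorem yields a smooth $L$ with an isolated minimum at $x_0$, but that minimum is in general degenerate, and because $L$ decreases along the flow of $-\nabla f$ its small sublevel sets are themselves cones on $K$ (they are unions of trajectories converging to $x_0$); they are balls if and only if $K$ is a sphere. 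The subsequent ``Morse perturbation'' cannot rescue this: a perturbation making a degenerate minimum nondegenerate while keeping $x_0$ as the \emph{unique} critical point in a neighborhood forces the nearby level sets to be spheres, which is precisely the open question; generically the perturbation instead splits $x_0$ into several Morse points, and your density argument only controls $\nabla L\cdot\nabla f>0$ on compacta away from $x_0$, which says nothing about what happens at the minimum itself. The remark that ``in the non-degenerate case one takes $L=|y|^2$'' gives the game away --- the lemma is only nontrivial for degenerate isolated minima, exactly where your construction has no content.

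The paper takes a genuinely different and unavoidable route here: it computes the local homology $H_i(M,M\setminus\{x_0\})$ to conclude that $M_a$ is a homology $(n-1)$-sphere, proves simple connectivity of $M_a$ (for $n\ge 3$) by pushing a bounding disc off $x_0$ and projecting along the cone structure, and then invokes the generalized topological Poincar\'e conjecture to conclude $M_a\cong S^{n-1}$, hence $M_{(-\infty,a]}\cong D^n$. Note that the conclusion is only a homeomorphism to a disc; your cobordism argument, if it worked, would give a diffeomorphism $K\cong S^{n-1}$, which is strictly stronger than what is true in general and should itself have been a warning sign. To repair your proof you would have to replace the Lyapunov construction by the homology-plus-Poincar\'e argument, at which point you recover the paper's proof.
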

\begin{proof}
	The manifold $M_{(-\infty,a]}$ is contractible as there is a deformation of $M_{(-\infty,a]}$ to a point along the negative to the gradient flow of $f$. In fact the manifold $M_{(-\infty,a]}$ is homeomorphic to a cone over $f^{-1}(a)=M_{a}$. Since $M$ is a manifold of dimension $n$, the group $H_i(M, M\setminus \{x_0\})=H_i(M_{(-\infty,a]}, M_{(-\infty,a]}\setminus \{x_a\})$ is isomorphic to $\Z$ in dimensions $i=0$ and $i=n$ and it is trivial otherwise. By the long exact sequence, we deduce that $M_{a}$ is a homology $(n-1)$-sphere. If $n<3$, then $M_a$ is a sphere, and therefore  $M_{(-\infty,a]}$ is homeomorphic to a disc. Suppose that $n\ge 3$.  
	
	Let us show that $M_a$ is simply connected, and, consequently, $M_a$ is a homotopy sphere. 
	To this end, let $\gamma$ be any loop on $M_a$. We will show that it is contractible in $M_a$.  Since $M_{(-\infty,a]}$ is contractible, there is a disc $D$ in $M_{(-\infty,a]}$ bounded by $\gamma$. Furthermore, since $n\ge 3$, by slightly perturbing $D$ we may assume that $x_0$ is not in $D$. Since $M_{(-\infty,a]}$ is homeomorphic to a cone over $M_a$ there is a projection of $M_{(-\infty,a]}\setminus \{x_0\}$ to $M_a$. It takes the disc $D$ to  $M_a$ producing a map of a disc extending the inclusion of $\gamma=\partial D$. Therefore, the manifold $M_a$ is simply connected. Thus $M_a$ is a homotopy $(n-1)$-sphere. Finally, by the generalized topological Poincar\'e conjecture $M_a$ is homeomorphic to a sphere, and, therefore, $M_{(-\infty,a]}$ is homeomorphic to a disc. 
\end{proof}

\iffalse
\textcolor{red}{Delete these theorems if we can smooth $H$ by taking a disc around it.(Since then we don't need to smooth $H$ like Takens does)}

\begin{theorem}
	(Douady)
	Let $V$ be a manifold with boundary and $S$ be the set of points where $\partial V$ is not smooth. If every point of $S$ has a neighborhood in $V$, diffeomorphic with $\{(x_1,\ldots,x_n)\in \R^n|x_1\geq 0 \& x_2\geq 0\}$, then $V$ has a unique smoothing $\tilde{V}$. 
\end{theorem}

\begin{theorem}\label{Noah}
	(Takens) Let $Q_1,Q_2,Q_3$ be a filling for a manifold $M$. There is a homeomorphism $h:M\to M$, isotopic with the identity such that:
	\begin{itemize}
		\item[i] $\{h(Q_i)\}_{i=1}^3$ is a filling of $M$ with $\partial Q_2$ smooth,
		\item[ii] $h(Q_i)\overset{a}{\simeq} Q_i$ for $i=1,2,3$.
	\end{itemize}
\end{theorem}

\begin{corollary}\label{Jasper}
	Let $Q_1,Q_2,\ldots, Q_n$ be a filling for a manifold $M$. There is a homeomorphism $h:M\to M$ isotopic to the identity such that:
	\begin{itemize}
		\item[i] $\{h(Q_i)\}_{i=1}^n$ is a filling of $M$ with $\partial Q_1$ smooth,
		\item[ii] $h(Q_i)\overset{a}{\simeq} Q_i$ for $i=1,2,\ldots,n$.
	\end{itemize}
\end{corollary}
\begin{proof}
	...
\end{proof}

\fi

	Let $M$ be a compact manifold with corners, and let $Q_1,\ldots, Q_{\ell}$ be a filling of $M$. We denote by $Q_{i,j}$ the union of $(j+1)$-faces of the element $Q_i$ of the filling, i.e., $Q_{i,0}$ is the complement in $\partial Q_i$ to the corners of $\partial Q_i$, and $Q_{i,j+1}$ is the complement in $\partial Q_i\bs \left(\bigcup\limits_{k\le j} Q_{i, k}\right)$  to the corners in $\partial Q_i\bs \left(\bigcup\limits_{k\le j} Q_{i, k}\right)$ for $j\ge 0$.

\begin{lemma}\label{Emma}
	Let $N$ be a smooth closed manifold of dimension $d$. Let $Q_1,\ldots, Q_{\ell}$ be a filling of $N$. Let $K$ be a smooth compact submanifold of $N$ of dimension $d$  such that $\partial K$ is transverse to each stratum $Q_{i,j}$ of the canonical stratification of each 
$\partial Q_i$. Let $Q_{r_1},..., Q_{r_n}$ be elements of the filling such that $Q_{r_i}\not\subset K$ for $i=1,..., n$, and $r_i<r_j$ for $i<j$. Put $S_i=\overline{Q_{r_i}\bs K}$ for $i\in\{1,2,\ldots, n\}$. Then the sets $K, S_1, \ldots S_n$ comprise a filling of $N$.
\end{lemma}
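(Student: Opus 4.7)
The plan is to verify the three defining conditions (P1), (P2), (P3) of Definition~\ref{Filling} for the ordered family in which $K$ is placed first and $S_1,\dots,S_n$ follow, inheriting their ordering from the $Q_{r_i}$. Along the way I also check that each $S_i$ is a smooth manifold with corners; this is where the hypothesis that $\partial K$ is transverse to every stratum $Q_{i,j}$ is used, producing a clean corner structure on $S_i=\overline{Q_{r_i}\setminus K}$ by intersecting the corner structure of $Q_{r_i}$ with the transverse half-space $\{\phi\ge 0\}$.

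Properties (P1) and (P2) are bookkeeping. For (P1), every $Q_i$ either satisfies $Q_i\subset K$, or is some $Q_{r_j}$, in which case $Q_i\subset K\cup\overline{Q_i\setminus K}=K\cup S_j$; summing over $i$ yields $N=K\cup S_1\cup\cdots\cup S_n$. For (P2), a point of $\mathrm{int}(K)$ has a neighborhood contained in $K$ and so cannot lie in any $S_i=\overline{Q_{r_i}\setminus K}$, while $\mathrm{int}(S_i)\cap\mathrm{int}(S_j)\subset\mathrm{int}(Q_{r_i})\cap\mathrm{int}(Q_{r_j})=\emptyset$ by (P2) for the original filling.

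For (P3) I split on the position of $z\in N$. If $z\in\mathrm{int}(K)$, then no $S_i$ contains $z$, so only one piece of the new filling meets $z$ and there is nothing to check. If $z\notin K$, then every $Q_i$ through $z$ satisfies $Q_i\not\subset K$ (since $z\in Q_i\setminus K$), hence equals some $Q_{r_j}$; because $K$ is disjoint from a neighborhood of $z$, the new filling coincides with the original there, and the special chart supplied by (P3) for $\{Q_i\}$ works verbatim.

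The essential case, and the main obstacle, is $z\in\partial K$. I would start from the special chart $(x_1,\dots,x_d)$ at $z$ for the original filling, with $Q_{k_1},\dots,Q_{k_\nu}$ (where $k_1<\cdots<k_\nu$) in the prescribed quadrant form in the first $\nu-1$ coordinates; and pick a smooth local defining function $\phi$ for $\partial K$ with $K=\{\phi\le 0\}$. Transversality of $\partial K$ to the deepest stratum $\{x_1=\cdots=x_{\nu-1}=0\}$ forces $\partial\phi/\partial x_i(z)\ne 0$ for some $i\ge\nu$; after permuting the free indices I may take $i=d$. Then $y_1:=\phi$, $y_j:=x_{j-1}$ for $j=2,\dots,d$, is a valid local coordinate change (its Jacobian determinant is $\pm\partial\phi/\partial x_d\ne 0$). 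In the new chart $K=\{y_1\le 0\}$ and each $Q_{k_j}$ retains its quadrant form but shifted by one index in the $y$-coordinates. The same transversality guarantees that every $Q_{k_j}$ has points outside $K$ near $z$, so $Q_{k_j}\not\subset K$ and each $Q_{k_j}$ is some $Q_{r_{p_j}}$ with $p_1<\cdots<p_\nu$ because the $k_j$'s are increasing. Taking closures, the corresponding new pieces become $S_{p_j}=\{y_1\ge 0,y_2\ge 0,\dots,y_{j-1}\ge 0,y_j\le 0\}$ for $j<\nu$ and $S_{p_\nu}=\{y_1\ge 0,\dots,y_\nu\ge 0\}$, which together with $K=\{y_1\le 0\}$ are precisely the corner quadrants demanded by (P3) for the ordered list $K,S_{p_1},\dots,S_{p_\nu}$. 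The depth bound $\nu+1\le d+1$ holds automatically at any $z\in\partial K$, since a codimension-one hypersurface cannot be transverse to a zero-dimensional stratum, forcing $\nu\le d$.
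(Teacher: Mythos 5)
Your proposal is correct and follows essentially the same route as the paper: (P1)--(P2) by bookkeeping, (P3) by cases on the position of $z$ relative to $K$, with the key case $z\in\partial K$ handled by using transversality of $\partial K$ to the deepest stratum through $z$ to build a new chart whose first coordinate is a defining function for $K$ and whose remaining coordinates reproduce the quadrant pattern shifted by one. The only cosmetic difference is that you permute the free coordinates so that $\partial\phi/\partial x_d\neq 0$ and take $y_j=x_{j-1}$ for all $j\ge 2$, whereas the paper completes $\{y_1,x_1,\dots,x_{m-1}\}$ to a coordinate system with auxiliary functions of $(x_m,\dots,x_d)$; both are valid.
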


\begin{proof}  We note that  the sets $K, S_1, ..., S_n$ cover the manifold $N$, and the interiors of the sets $K, S_1,..., S_n$ are disjoint. Therefore, the sets $K, S_1,..., S_n$ satisfy the first two properties of a filling. 
	
Let us now verify Property 3. To begin with, let $z$ be a point in $N\setminus K$. Suppose that $z\in Q_{i_1}\cap Q_{i_2}\cap\ldots\cap Q_{i_m}$ for $1<m\leq n$, but $z$ is not in $Q_{i_1}\cap Q_{i_2}\cap\ldots\cap Q_{i_m}\cap Q_{i}$ for all $i\notin \{i_1,..., i_m\}$. Since $\{Q_i\}$ is a filling, we deduce that there is a small coordinate disc neighborhood $D$ about $z$ in $N\setminus K$ such that 
\begin{align*}
	&D\cap Q_{i_1}=\{x_1\leq 0\},\\
	&D\cap Q_{i_2}=\{x_1\geq 0, x_2\leq 0\},\\
	&\cdots\\
	&D\cap Q_{i_m}=\{x_1\geq 0, x_2\geq 0, \ldots, x_{m-1}\geq 0\}.
\end{align*}
Since $D\cap Q_i=D\cap S_i$, we deduce that Property 3 is satisfied for the point $z$. If $z$ is in the interior of $K$, then Property 3 is again clearly satisfied. 

Suppose now that $z$ is on the boundary of $K$, the point $z$ is in $Q_{i_1}, ..., Q_{i_m}$, and $z\notin Q_i$ for $i\notin\{i_1,..., i_m\}$. Again, there is a coordinate disc neighborhood $D$ about $z$ such that the intersections of $D$ with $Q_{i_1}, ..., Q_{i_m}$ are as above. The dimension of the manifold 
\[
Q = \{ x\in Q_{i_1}\cap Q_{i_2}\cap\ldots\cap Q_{i_m} \ |\ x\notin Q_{i_1}\cap Q_{i_2}\cap\ldots\cap Q_{i_m}\cap Q_{i} \quad \mathrm{for}\quad i\notin \{i_1, ..., i_m\}\}
\]
is $d-m+1$. Therefore, if $z\in \partial K$, then $d\ge m$, since $\partial K$ is transverse to $Q$.
% That is, K would have to intersect non-transversally to get $z=K\cap Q_1\cap \ldots \cap Q_{d+1}$. We don't want this because we wouldn't have enough coordinates in our coordinate chart to accommodate K since then we would have d+2 fillings which is more than what we can consider. This was an important note to me since I thought there might be an issue with the dimension and the size of the filling. If it goes above d+1 then it will break the definition since $D$ is a chart of the same dimension of the manifold. We would have $D$ intersecting $d+2$ fillings and thats bad.
Without loss of generality, we may assume that there is a smooth function $y_1$ on $D$ such that $D\cap K$ is given by $y_1\le 0$ and the gradient of $y_1$ at $z$ is non-trivial. We write $A\pitchfork_S B$ when a manifold $A$ is transverse to a manifold $B$ at each point of a set $S$.  We have \begin{align*}
	\partial K\pitchfork_z (Q_{i_1}\cap\ldots\cap Q_{i_m}) &\iff \partial K\pitchfork_z \{(x_1,x_2,\ldots, x_m)|x_1=0,\ldots,x_{m-1}=0\}\\ &\iff \nabla y_1 \notin \left\langle\pa{}{x_1},\ldots, \pa{}{x_{m-1}}\right\rangle.
\end{align*}
Indeed, since $\partial K$ is of dimension $d-1$, it is not transverse to $Q$ at $z$ only if $T_zQ\subset T_z\partial K$, i.e., $\nabla y_1$ is perpendicular to $T_zQ=\left\langle\pa{}{x_m},\ldots, \pa{}{x_{d}}\right\rangle$ with respect to the standard Riemannian metric on $D$. 

Put $y_j=x_{j-1}$ for $j=2,\ldots, m-1$. Since $\nabla y_1 \notin \left\langle\pa{}{x_1},\ldots, \pa{}{x_{m-1}}\right\rangle$, there exist functions $y_{m+1},\ldots, y_d$ of $(x_m,\ldots, x_d)$ such that the $\nabla y_1, \nabla y_m,\ldots, \nabla y_d$, are linearly independent at the point $z$. 
By the inverse function theorem, for a small disc neighborhood $D_0$ of $z$, the map $y|D_0: D_0\to \R^d$ is a local coordinate chart. %If necessary, we may choose a smaller disc $D_0\subset D$  such that the smooth map $y: D_0\to \R^d$ is a diffeomorphism onto its image, where $y=(y_1,... , y_d)$.
Then
\begin{align*}
	&D_0\cap K=\{y_1\leq 0\}\\
	&D_0\cap S_{i_1}= D_0\cap \overline{(Q_{i_1}\setminus K)}=\{y_1\geq 0, x_1\le 0 \} =\{y_1\geq 0, y_2\leq 0\}\\
	&D_0\cap S_{i_2}=D_0\cap \overline{(Q_{i_2}\setminus K)}=\{y_1\geq 0, x_1\geq 0, x_2\le 0 \}=\{y_1\geq 0, y_2\geq 0, y_3\leq 0\}\\
	&\cdots\\
	&D_0\cap S_{i_m}=D_0\cap \overline{(Q_{i_m}\setminus K)}=\{y_1\geq 0, x_1\geq 0,..., x_{m-1}\geq 0 \}=\{y_1\geq 0, y_2\geq 0, \ldots, y_m\geq 0\}
\end{align*}
which fulfills property 3.
Therefore, the sets $K, S_1,\ldots, S_n$ comprise a filling. 
\end{proof}

We are now in position to prove the following theorem. 
\begin{theorem}\label{th:13lower}
For any closed manifold $M$, we have $\Bcat^\T (M) + 1\le \Critb(M)$.
\end{theorem}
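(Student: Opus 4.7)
The plan is as follows. Take a smooth function $f:M\to\mathbb{R}$ realizing $\Critb(M)=k$, with critical points $x_1,\dots,x_k$ each admitting a cylindrical ball neighborhood $H_i\subset M_{[c_i-\varepsilon_i,c_i+\varepsilon_i]}$. First I perturb $f$ so that the critical values $c_1<c_2<\cdots<c_k$ are pairwise distinct, while keeping every critical point together with its cylindrical ball neighborhood intact; this can be done by composing $f$ with a monotone diffeomorphism of $\mathbb{R}$ and adding small bump perturbations supported well outside the $H_i$'s, in regions where $\nabla f\ne 0$ so that no new critical points appear. After perturbation, $x_1$ is the unique global minimum. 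Choose regular values $a_0<c_1<a_1<c_2<\cdots<c_k<a_k$ with $H_i\subset M_{(a_{i-1},a_i)}$, and set $B_1:=M_{(-\infty,a_1]}$, which is a topological ball by Lemma~\ref{Kyle}. This provides the base case of an inductive construction.

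Inductively, suppose one has a Singhof-Takens filling $\mathcal{F}_{i-1}=\{B_1^{(i-1)},\dots,B_{i-1}^{(i-1)},R_{i-1}\}$ of $M$ by $i$ elements in which each $B_j^{(i-1)}$ is a topological ball containing $x_j$ and $R_{i-1}$ is a smooth compact codimension-$0$ submanifold containing $x_i,\dots,x_k$. Define $K_i:=H_i\cup E_i$, where $E_i$ is the union of gradient trajectories from the lower horizontal face $\partial_-H_i=H_i\cap M_{c_i-\varepsilon_i}$ flowed downward past the level $M_{a_{i-1}}$. Since $H_i$ is a ball and $E_i$ is a collar attached along the face $\partial_-H_i\subset\partial H_i$, the union $K_i$ is homeomorphic to $H_i$ and hence a topological ball. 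Because the gradient is transverse to the regular level $M_{a_{i-1}}$, the boundary $\partial K_i$ meets $\partial R_{i-1}$ transversely, and a further small perturbation of $\partial_-H_i$ inside $M_{c_i-\varepsilon_i}$ makes $\partial K_i$ transverse to the strata of every $\partial B_j^{(i-1)}$. Applying Lemma~\ref{Emma} with $K=K_i$ produces a new filling $\mathcal{F}_i$ in which each previous ball $B_j^{(i-1)}$ is replaced by $\overline{B_j^{(i-1)}\setminus K_i}$; this latter set is obtained from a topological ball by excising a collar on a sub-piece of its boundary, and therefore remains a topological ball.

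After $k-1$ iterations, the remainder $R_{k-1}$ contains only the unique global maximum $x_k$, so by Lemma~\ref{Kyle} applied to $-f$ it is also a topological ball. Setting $B_k:=R_{k-1}$ gives a Singhof-Takens filling of $M$ by $k$ topological balls, which proves $\Bcat^{\T}(M)+1\le k=\Critb(M)$. The main obstacle is the careful bookkeeping required at the inductive step: one must verify both the transversality hypotheses of Lemma~\ref{Emma}, so that the lemma actually applies, and the fact that removing $K_i\cap B_j^{(i-1)}$ from an already-constructed ball $B_j^{(i-1)}$ genuinely excises a standard product collar on part of its boundary, so that the ball property is propagated through all $k-1$ iterations rather than being lost at some intermediate stage.
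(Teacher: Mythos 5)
Your overall strategy (separate the critical values, take the cylindrical ball neighborhoods as filling elements, use Lemma~\ref{Emma} to repair the corner structure) is in the right spirit, but the inductive architecture has a fatal flaw: the previously constructed balls are only ever \emph{shrunk} (you replace $B_j^{(i-1)}$ by $\overline{B_j^{(i-1)}\setminus K_i}$), never enlarged. Each $K_i=H_i\cup E_i$ lives in $M_{[a_{i-1}-\delta,\,c_i+\varepsilon_i]}$, so after $k-1$ steps the balls $B_1,\dots,B_{k-1}$ cover only $M_{(-\infty,a_1]}$ together with $k-2$ thin ``tubes with bulbs,'' and the remainder $R_{k-1}$ contains essentially all of $M_{[a_1,a_{k-1}]}$ minus those tubes. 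Lemma~\ref{Kyle} applied to $-f$ shows that the superlevel set $M_{[a_{k-1},\infty)}$ is a ball; it says nothing about $R_{k-1}$, which is roughly $M$ with $k-1$ open balls deleted and is not a topological ball unless $M$ is very special (already for $M=T^n$ with a minimizing $f$, your $R_{k-1}$ carries the nontrivial topology of $M$). A related symptom: for $i\ge 3$ your $K_i$ sits in the interior of $R_{i-1}$ and never meets $\partial R_{i-1}$, so the transversality discussion at that interface is vacuous while the real problem — who absorbs the regular region $M_{[a_{i-1},a_i]}\setminus H_i$ — is never addressed. You cannot fix this by fattening $K_i$ to swallow that region, because then $K_i$ would no longer be a ball.

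The missing idea, and the heart of the paper's proof, is to induct on the sublevel sets $M_{[0,a_i]}$ using \emph{relative} fillings (Definition~\ref{RelFilling}) rather than fillings of the closed manifold $M$, and at each step to push \emph{every} existing filling element upward through the cobordism $M_{[a_i,a_{i+1}]}$ along a modified gradient field that is vertical near $M_{a_i}$ and avoids the cylindrical neighborhood $H$ of $x_{i+1}$. Each old element $Q_j$ grows by a product collar (hence stays a topological ball), the smoothed $H'$ is inserted as the single new element, and the filling axioms at the interface between $H'$ and the swept-up elements are verified by applying Lemma~\ref{Emma} not to $M$ but to the closed $(n-1)$-manifold $M_{a_i}$, where $H'\cap M_{a_i}$ plays the role of $K$. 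With this architecture there is no leftover remainder: after the last step the relative filling of $M_{[0,a_{n+1}]}$ is a filling of $M$ by topological balls. I would also flag, as a secondary point, that your perturbation separating the critical values cannot be supported ``well outside the $H_i$'s'' (a perturbation away from the critical points does not change the critical values); one instead adds a small constant times a bump that is identically $1$ on a neighborhood of one critical point, small enough not to create new critical points where its gradient competes with $\nabla f$.
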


\begin{proof}
Let $x_0, x_1, \ldots, x_n$ be the critical points of a smooth function $f:M\to \R$ minimizing $\Critb(f)$.  Since $M$ is closed we may assume that $f>0$.  By slightly perturbing $f$ and re-indexing $x_0, x_1, \ldots, x_n$, we may also assume that $f(x_i)<f(x_{i+1})$. For $i=0,..., n+1$, let $a_i$ be real numbers such that $a_i<f(x_i)<a_{i+1}$ and $0=a_0$. We note $f$ is constant over $M_{a_i}$  and for each $i$ the restriction $f|{M_{[0, a_i]}}$ is a function minimizing $\Critb(f|M_{[0, a_i]})$ among functions on $M_{[0, a_i]}$ that are constant and maximal over $M_{a_i}$.

The function $f$ possesses only one critical point in the manifold $M_{[0, a_1]}$ at which $f$ assumes the global minimum. Therefore, by Lemma~\ref{Kyle}, the manifold  $M_{[0,a_1]}$ is homeomorphic to a disk.  Thus $\Bcat^\T(M_{[0,a_1]})+1=\Critb(M_{[0,a_1]})$.
By induction, suppose that 
\[
\Bcat^\T(M_{[0,a_i]})+1\leq \Critb(M_{[0,a_i]}).
\]
We note that 
\[
\Critb(M_{[0,a_i]})+1= \Critb(M_{[0,a_{i+1}]}).
\]
Let's show that $\Bcat^\T(M_{[0,a_{i+1}]})\le \ell+ 1$, where $\ell=\Bcat^\T(M_{[0,a_i]})$. Suppose that $Q_0,..., Q_\ell$ is a relative filling for $M_{[0,a_{i}]}$. It suffices to show that there is a filling for $M_{[0,a_{i+1}]}$ with $\ell+2$ elements. For an isolated critical point $x_i\in M_{{[a_i,a_{i+1}]}}$, recall that $D(x_i)$ denote the set of points $x$ in $M$ such that $\lim \gamma_t(x)=x_i$ as $t\to \infty$ or $t\to -\infty$, where $\gamma_t(x)$ is the gradient flow of $f$.  
Let $H$ be a cylindrical ball neighborhood of $x_i$ in  $M_{[a_i, a_{i+1}]}$. We may choose $H$ so that $H_{a_i}$ and $H_{a_{i+1}}$ are the upper and lower horizontal parts of the boundary of $H$. 

%Namely, we first construct a certain small neighborhood $V$ of $x_i$ in $M_{x_i}$, and then define $H$ to be the space of all points $x$ such that either the gradient curve passing through $x$ intersects $V$, or one of the limits of the gradient curve through $x$ is $x_i$. By \cite[Lemma 9]{Sa20} and \cite[Corollary 15]{Sa20} the manifold $H$ is a smooth manifold with corners, and $H$ is homeomorphic to a ball.

%justification for $\alpha$
%Suppose that $x$ is a point in $M_{[a_i, a_{i+1}]}$. By the gradient flow $\gamma$, we know there exists a unique value $t>0$ and a unique $y\in M_{a_i}$ such that $\gamma_t(y)=x$. From this we can have a diffeomorphism that associates the pair $(y,t)$ to x. Define a function $\alpha:M_{a_i}\to \R$ that takes a point $y\in M_{a_i}$ to $t\in \R$ if $\gamma_t(y)\in M_{a_{i+1}}$.

On the manifold $M_{[a_i, a_{i+1}]}$ there is a positive function $\alpha$ such that the flow along the scaled vector field $\alpha\cdot \nabla f$ takes any point on $M_{a_{i}}\backslash H$ to a point on $M_{a_{i+1}}\backslash H$ at the same time $t=1$ for all points on $M_{a_{i}}\backslash H$.

%\begin{center}
\begin{figure}
\includegraphics[width=0.45\textwidth]{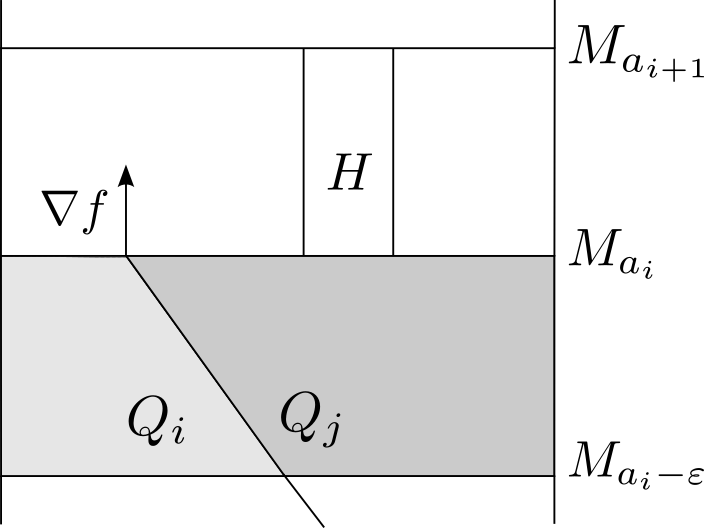}
\caption{The gradient field $\nabla f$, and the elements $Q_i$, $Q_j$ of a filling of $M_{[0, a_i]}$.}
\label{bcat1}
\end{figure}
%\end{center}
\begin{figure}
	\includegraphics[width=.45\textwidth]{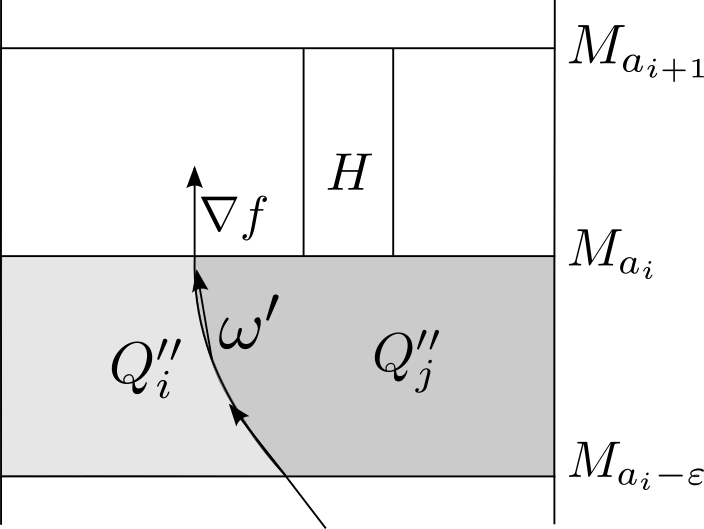}
	\caption{The gradient field $w'$ shown at 3 points with the modified filling.}
\label{bcat2}
\end{figure}

Since $Q_0,..., Q_\ell$ is a relative filling of $M_{[0, a_i]}$, the strata of the boundary $Q_j$ intersects each submanifold $M_{t}$ transversally for $t\in [a_i-\varepsilon, a_i)$ and all $j$ for a sufficiently small number $\varepsilon>0$. In particular, there is a gradient like vector field $w$ on $M_{[a_i-\varepsilon, a_i)}$ such that $w$ is tangent to each stratum of $\partial Q_j$ when restricted to $\partial Q_j$ for all $j$, see Fig.~\ref{bcat1}. 

Our aim is to extend the present relative filling $\{Q_i\}$ to a relative filling of $M_{[0, a_{i+1}]}$. To this end, we 
we will next modify the vector field $w$ and the relative filling of $M_{[0, a_i]}$ to a vector filed $w'$ and a relative filling $\{Q_{i}''\}$. 

 Let $\lambda$ be a smooth monotone function on $[0, a_{n+1}]$ such that $\lambda(t)= 0$ for all $t\in [0,a_i-\varepsilon]$, and $\lambda(t)= 1$ for all $t\in [a_i, a_{n+1}]$. Define a vector filed $w'$ on $M_{[a_{i}-\varepsilon, a_i]}$ by 
\[
w'(x)=	(1-\lambda (f(x)))w(x)+\lambda(f(x))\nabla f(x). 
\]
Then $w'$ is a vector field that agrees with $w$ near
$M_{a_i-\varepsilon}$, and agrees with $\nabla f$ near $M_{a_i}$. 
We now define a new relative filling of $M_{[0, a_i]}$ by sets $Q_0'', ..., Q_{\ell}''$, where $Q_j''$ is the union of $Q_j\cap M_{[0, a_i-\varepsilon)}$ and the points $y$ in $M_{[a_i-\varepsilon, a_i]}$ that flow along $-w'$ to  $Q_j\cap M_{a_i-\varepsilon}$, see Fig.~\ref{bcat2}. To show that $\{Q_i''\}$ is a relative filling, we observe that the submanifold $M_{a_i-\varepsilon, a_i}$ is diffeomorphic to $M_{a_i-\varepsilon}\times [0,1]$ under a diffeomorphism that takes any flow line of $w'$ in $M_{a_i-\varepsilon}$ to a segment of the form $\{x\}\times [0,1]$. The sets $Q_i''$ coincide with the elements $Q_i$ of a filling except over the part $M_{a_i-\varepsilon}\times [0,1]$. On the other hand, the sets $Q_i''\cap (M_{a_i-\varepsilon}\times [0,1])$ are cylinders $(Q_i''\cap M_{a_i-\varepsilon})\times [0,1]$. Therefore, the sets $Q_i''$ indeed form a relative filling. 

We observe that the filling $\{Q_i''\}$ of $M_{[0, a_i]}$ has the property that each non-horizontal  stratum of each element $Q_j''$ is vertically up near $M_{a_i}$.  

\begin{figure}
\includegraphics[width=.45\textwidth]{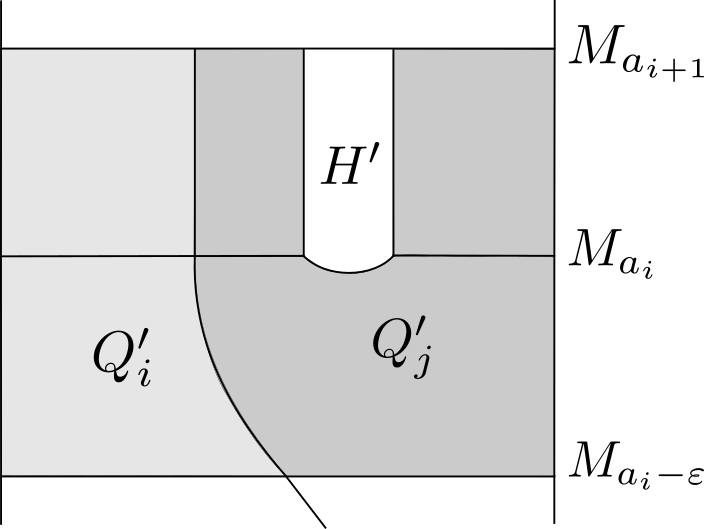}
\caption{The relative filling $\{Q_i'\}$.}
\label{bcat3}
\end{figure}

We next modify the vector field $w'$ to a vector field $v$ and extend it over $M_{[0, a_{i+1}]}$ which we will use to smooth corners of $H$, and extend the relative filling $\{Q_i''\}$ to a relative filling of $M_{[0, a_{i+1}]}$ that consists of extensions $Q_i'$ of $Q_i''$ and the smoothed version $H'$ of $H$. 

There is a smooth vector field  $v(x)$ on $M_{[0, a_{i+1}]}$ such that 
\begin{itemize} 
\item $v$ is zero over $M_{[0, a_i-\varepsilon]}$, 
\item $v$ is a multiple of $w'$ over $M_{[a_i-\varepsilon, a_i]}$,
\item $v$ is a non-zero multiple of $w'$ over $M_{[a_i-\varepsilon/2, a_i]}$, and
\item $v$ agrees with the scaled gradient vector field $\alpha\cdot \nabla f$ on $M_{[a_i, a_{i+1}]}\setminus H$.
\end{itemize}
In fact, we may choose $v(x)$ to be of the form $v(x)=\lambda_1(f(x))w'+\lambda_2(f(x))\nabla f$ for some smooth functions $\lambda_1$ and $\lambda_2$. Let 
\[
\rho\co M_{[0, a_{i}]}\times [0,1]\to M_{[0, a_{i+1}]}\] 
denote the flow along the vector field $v$ for $t\in[0,1]$. We note that $\rho_1$ takes $M_{a_i}\bs H$ diffeomorphically onto $M_{a_{i+1}}\bs H$, where $\rho_t(x)=\rho(x,t)$. 

Next we replace $H$ with a smooth submanifold $H'$ of $M_{[0, a_{i+1}]}$ with boundary that is smooth away from $M_{a_{i+1}}$. 
To this end we use the flow $\rho$ to identify a neighborhood of $H_{a_i}=H\cap M_{a_i}$ with $H_{a_i}\times [-\varepsilon/2, \varepsilon/2]$ by a diffeomorphism that takes a point $x$ in a neighborhood of $H_{a_i}$ with a pair $(y, t)$ such that $x=\rho_t(y)$. Let $\pi_{i}$ denote the projection of the product neighborhood onto the $i$-th factor where $i=1,2$. Given a function $\psi\le 0$ on $H_{a_i}$ with values in $(-\varepsilon/2, 0]$, let $H_\psi$ denote the union of $H$ and the points $x$ in the neighborhood $H_{a_i}\times [-\varepsilon/2, \varepsilon/2]$ above the graph of the function $\psi$, i.e., with points satisfying $\pi_2(x)\ge \psi(\pi_1(x))$. We may choose $\psi$ so that $H'=H_\psi$ is a submanifold of $M$ with corners only on $H'_{a_{i+1}}=H'\cap M_{a_{i+1}}$. 
 
Let $Q_i'$ denote the subset $\rho_1(Q_i'')\setminus H'$, see Fig.~\ref{bcat3}.  We claim that $H', Q_0', Q_1',\ldots, Q_\ell'$ is a relative filling of $M_{[0, a_{i+1}]}$. We note that the ordering of the elements of this relative filling is important and that $H'$ is the first element. Let $L$ be $M_{a_{i+1}}\times[0,1]$, and let $\tilde{M}$ stands for $L\cup M_{[0, a_{i+1}]}$ where the union is taken by identifying $M_{a_{i+1}}\times \{0\}\subset L$ with the boundary of $M_{[0, a_{i+1}]}$. We see that the sets $L, H', Q_0', Q_1',\ldots, Q_\ell'$ satisfy properties $1$ and $2$ for a relative filling of $\tilde{M}$ by construction. All that is left to show is that these sets satisfy property $3$ for all points $z\in \tilde{M}\bs\partial\tilde{M}$.

The sets $Q_0, Q_1, \ldots, Q_\ell$ define a filling  of the manifold $M_{a_i}$ by the sets $R_i=Q_i\cap M_{a_i}$. Let $H_0=H'\cap M_{a_i}$. Since $H'$ is a smooth compact submanifold of $M_{[0, a_{i+1}]}$ such that $\partial H'$ intersects the level $M_{a_i}$ transversely, we conclude that $H_0$ is a smooth compact submanifold of $M_{a_i}$. 
By Lemma~\ref{Emma}, the submanifold $H_0$ together with the sets $R_i'=\overline{R_i\setminus H_0}$ define a new filling of $M_{a_i}$; if $R_i'$ is empty, then we do not include it in the filling. 
 Let $H_1=H'\cap M_{a_{i+1}}$, and let $R_i''$ denote $\rho_1(R_i')$. Then $H_1$ together with $R_i''$  form a filling of $M_{a_{i+1}}$. 

 We shall now prove that property $3$ is satisfied for all $z\in \tilde{M}\bs\partial\tilde{M}$. We will consider four cases.
\begin{enumerate}
\item If $z\in M_{[0,a_{i+1}]}\bs H'$
then property $3$ is satisfied for $z$ since property $3$ is preserved under diffeomorphism, and the sets $Q'_i$ are images of the sets $Q_i''$ of a filling under the diffeomorphism $\rho_1$. 

\item If $z\in int(H')$ or $z\in int(L)$ then property $3$ is satisfied since the number of submanifolds of the filling containing that point is $1$.

\item Suppose $z$ is in the intersection $\partial H'\cap M_{(a_i,a_{i+1})}$. Let $z'$ be a point on $M_{a_i}$ on the gradient flow line passing through $z$, i.e., $\rho_{t_0}(z')=z$ for some $t_0>0$ Since the sets $H_0$ together with $R_i'$ form a filling of $M_{a_i}$, there is a special coordinate chart $D'$ of $z'$ as in Definition~\ref{Filling}. Let $D_\varepsilon\subset M_{[0, a_{i+1}]}$ be a coordinate chart  about $z$ given by 
\[
 D_\varepsilon = D'\times (-\varepsilon, \varepsilon)\subset M_{[0, a_{i+1}]},
\]
\[
(x, t) = \rho_{t_0+t}(x).  
\]
 We may choose $\varepsilon$ sufficiently small so that $D_\varepsilon$ is a subset in $H'$. Then $D_\varepsilon$ is a special coordinate chart about $z$ as in Definition~\ref{Filling}.

\item  Suppose that $z$ is in the intersection $H'_{a_{i+1}}=H'\cap M_{a_{i+1}}$. If $z$ is in the interior of $H'_{a_{i+1}}$, then the property $3$ is clearly satisfied. If $z$ is in the boundary of $H'_{a_{i+1}}$, then the property $3$ is satisfied since $\{H_1, R''_0, ... R''_\ell\}$ is a filling of $M_{a_{i+1}}$.

\item Suppose that $z$ is in the intersection $J=\partial H'\cap M_{[0,a_i]}$. If $z$ is in the interior of $J$, then the property $3$ is satisfied for $z$ since $\{Q_i''\}$ is a relative filling for $M_{[0, a_i]}$. If $z$ is in the boundary of $J$, then the property $3$ is satisfied for $z$ by the reason as in the case $3$. 

%, then property $3$ is satisfied. To see this, we note that $\partial H\cap M_{[0,a_i]}$ has a filling of codimension $1$ because $H'\cap M_{a_i}$ had a filling in $M_{a_i}$ and fillings are preserved under almost diffeomorphism \textcolor{red}{?}. So we see that the coordinate chart for $z$ will be the same coordinate chart for $(M_{a_i}\cap H')\times [0,1]$. Similarly, we see that property $3$ is satisfied if $z\in \partial H'\cap L$

\end{enumerate}
Thus,  the sets $H', Q_0',Q_1',\ldots Q_\ell'$ form a relative filling for $M_{[0,a_{i+1}]}$. Therefore as we pass from $M_{[0,a_i]}$ to $M_{[0,a_{i+1}]}$, the number of elements in a minimizing filling increases by $1$. Consequently, we have 
\[
\Bcat^\T(M_{[0,a_{i+1}]})+1\leq \Bcat^\T(M_{[0,a_i]})+2\leq \Critb(M_{[0,a_i]})+1\leq \Critb(M_{[0,a_{i+1}]}).\]

When $i=n$, the relative filling of $M_{[0, a_{i+1}]}$ is a filling of $M$. Consequently
\[
\Bcat^\T(M)+1\leq \Critb(M).
\]
which completes the proof.

\end{proof}

\section{The converse to the Lusternik-Schnirelmann inequality}\label{s:5}

In this section we will establish the converse to the Lusternik-Schnirelmann inequality. Namely, we will show that for a closed manifold $M$ there is an estimate: 
\[
\Bcat(M)+1\ge \Critb(M).
\]

Let $Q$ be a compact submanifold with corners of codimension $0$ in a smooth manifold $M$. A \emph{smooth collar neighborhood} of $Q$ in $M$ is a compact submanifold $Q'\subset M$ with smooth boundary such that there is an almost diffeomorphism $Q'\to Q\cup (\partial Q\times [0,1])$, where the union is taken by identifying $\partial Q\subset Q$ with $\partial Q\times \{0\}\subset \partial Q\times [0,1]$.  

 We recall that a vector field over a compact subset $Q$ of a smooth manifold is said to be \emph{smooth} if it is the restriction of a smooth vector field defined over a neighborhood of $Q$. 
 
Let $X$ be a topological submanifold of codimension $1$ of a Riemannian manifold $M$. Let $v$ be a smooth vector field over $X$ in $M$. Roughly speaking, the vector field $v$ is \emph{transverse} to $X$ at a point $x\in X$ if the projection of a neighborhood of $x$ in $X$ to a disc of dimension $\dim M-1$  transverse to $v(x)$ is a homeomorphism onto image. More precisely, 
 for each point $x\in X$,  let $N_x$ denote the unique geodesic in $M$ parametrized by $t\in (-\varepsilon, \varepsilon)$ such that at the time $t=0$ it passes through $x$, and its velocity at $t=0$ is $v$. If $\varepsilon>0$ is sufficiently small, then the geodesic $N_x$ exists for each $x\in X$. 
Suppose that for each point $x\in X$ there is a smooth disc $D_x$ in $M$ centered at $x$ such that $D_x$ is transverse to $N_x$ and 
$\{D_x\}$ is a continuous family of discs. Then for any point $x\in X$ we may identify $D_x\times N_x$ with a neighborhood of $x$ so that $D_x\times \{0\}$ is identified with $D_x\subset M$, and $0\times N_x$ is identified with $N_x\subset M$. Suppose that a neighborhood of $x$ in $X$ is the graph $\{(y, f_x(y))\}\subset D_x\times N_x$ of a continuous function $f_x\co D_x\to N_x$. Then we say that $v$ is \emph{transverse} to $X$ at $x$. If $v$ is transverse to $X$ at every point $x\in X$, then we say that $v$ is \emph{transverse} to $X$.  

Given a vector field $v$ transverse to $X$, let $\sigma\co X\to M$ be a continuous function that associates with each point $x$ a point in $N_x$. Then $\sigma(X)$ is homeomorphic to $X$. By the Cairns-Whitehead theorem, for every $\delta>0$ there is a continuous function $\sigma$ as above such that $\sigma(X)$ is a smooth submanifold of $M$ in the $\delta$-neighborhood of $X$. 

Similarly, let $M$ be a smooth manifold, and $Q$ be a set in a Singhof-Takens filling of $M$ by smooth manifolds with corners.  Then the Cairns-Whitehead smooth approximation of the boundary of $Q$ defines a smooth manifold (with smooth boundary) $Q'$ approximating $Q$. Lemma~\ref{lQ:15} shows that we may choose $Q'$ so that $Q\subset Q'$. 

\begin{remark}\label{rem:CWhi} In the Cairns-Whitehead construction above, if $\varepsilon>0$ is small enough, then different geodesic segments $N_x$  do not intersect. Let $\pi$ denote the projection of 
the neighborhood $\cup N_x$ of $X=\partial Q$ to $X$ defined by projecting each fiber $N_x$ to $x$. 
The smoothing $\sigma(X)$ is determined by the Riemannian metric on $M$, the value $\varepsilon$, the vector field $v$ as well as a smooth approximation of $\pi$. Since any two transverse vector fields over $X$ are homotopic, and any two smooth approximations of $\pi$ are isotopic, any two smooth approximations of $X$ are isotopic. In particular, if a Cairns-Whitehead smoothing of a manifold $Q$ with corners in a Singhof-Takens filling is diffeomorphic to a ball, then any other Cairns-Whitehead smoothing of $Q$ is also diffeomorphic to a ball. Thus, the notion of a smooth ball with corners is well-defined. 
\end{remark}

\begin{lemma}\label{lQ:15} Let $Q$ be a compact submanifold with corners of codimension $0$ in a smooth manifold $M$. Then $Q$ possesses a collar neighborhood. 
\end{lemma}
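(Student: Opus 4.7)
The plan is to enlarge $Q$ slightly by flowing its boundary outward along a smooth outward-pointing vector field, and then to round off the resulting corners using the Cairns-Whitehead smoothing described in Remark~\ref{rem:CWhi}. First I would construct a smooth vector field $v$ on an open neighborhood $W$ of $\partial Q$ in $M$ that points out of $Q$ at every point of $\partial Q$. In each corner chart $U_\alpha \subset [0,\infty)^{k_\alpha}\times \R^{n-k_\alpha}$ of $Q$, the local field $v_{U_\alpha}=-\sum_{i=1}^{k_\alpha}\partial/\partial x_i$ is smooth and strictly outward-pointing at every boundary point. Transition maps of a manifold with corners permute the boundary-defining coordinates up to diffeomorphism, so any convex combination of the pushforwards of such $v_{U_\alpha}$ remains strictly outward-pointing at $\partial Q$; patching via a partition of unity (and extending arbitrarily over the rest of $M$) thus yields the desired global $v$.

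Let $\phi_t$ denote the local flow of $v$. By compactness of $\partial Q$ there exists $\varepsilon > 0$ such that the map
\[
\Phi\co \partial Q \times [0,\varepsilon] \longrightarrow M,\qquad (x,t)\mapsto \phi_t(x),
\]
is well-defined, injective, a homeomorphism onto its image, and a diffeomorphism on the complement of the corner set $\Sigma \subset \partial Q$. Put $\tilde Q = Q \cup \Phi(\partial Q\times[0,\varepsilon])$; this is a compact codimension $0$ topological submanifold of $M$ whose boundary $X_0 = \phi_\varepsilon(\partial Q)$ is homeomorphic to $\partial Q$ and therefore still has corners. Because $v$ remains transverse to $X_0$, Remark~\ref{rem:CWhi} produces a smooth $C^0$-close hypersurface $X \subset W$ obtained from $X_0$ by a small displacement $\sigma\co X_0 \to M$ along flow lines of $v$, whose $C^0$-norm can be made as small as we like. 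By choosing the smoothing scale $\delta \ll \varepsilon$, and arranging $\sigma$ to be the identity outside a neighborhood of the corner strata (where $X_0$ is already smooth), we can ensure that $X$ is a smooth hypersurface cobounding with $Q$ a compact codimension $0$ smooth submanifold $Q'\subset\tilde Q$ with $Q\subset Q'$ and $\partial Q' = X$.

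Finally, composing $\Phi$ with the Cairns-Whitehead displacement $\sigma$ gives a canonical map
\[
F\co Q \cup (\partial Q\times [0,1]) \longrightarrow Q',
\]
which is the identity on $Q$, sends $\partial Q\times\{t\}$ into the flow tube of $v$, is a homeomorphism, and restricts to a diffeomorphism on the complement of the corner set $\Sigma$ of $Q$; by definition it is an almost diffeomorphism. The main obstacle is the third step: reconciling the Cairns-Whitehead perturbation $\sigma$ with the product structure supplied by $\Phi$ so that the assembled map $F$ is genuinely smooth off $\Sigma$ and so that the smoothed hypersurface $X$ still encloses $Q$. The former is handled by observing that $\Phi$ is already smooth on $(\partial Q\setminus\Sigma)\times[0,\varepsilon]$, so $\sigma$ only needs to act inside a small neighborhood of $\Sigma\times\{\varepsilon\}$; the latter by trapping $X$ between $\phi_{\varepsilon/2}(\partial Q)$ and $\phi_{3\varepsilon/2}(\partial Q)$, which is possible since the smoothing scale is controlled independently of $\varepsilon$.
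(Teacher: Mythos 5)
Your proposal is correct and follows essentially the same route as the paper: choose an outward transverse vector field along $\partial Q$, displace $\partial Q$ outward along it (you use the flow of $v$, the paper uses geodesics with initial velocity $v$ — an immaterial difference), smooth the displaced topological hypersurface by the Cairns--Whitehead theorem, and take the region it cobounds with $Q$ as the collar. Your extra detail on building $v$ by a partition of unity over corner charts and on assembling the almost diffeomorphism only fills in steps the paper leaves implicit.
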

\begin{proof}  There is a smooth vector field $v$ over $\partial Q$ in $M$ transverse to $\partial Q$. Choose a Riemannian metric on $M$. For each point $x\in Q$,  let $\gamma_t(x)$ denote the geodesic parametrized by $t$ such that $\gamma_0(x)=x$, and $\dot\gamma_t(x)|_{t=0}=v(x)$. 
Let $exp$ denote the map $Q\to M$ that associates with each point $x$ the point $\gamma_1(x)$.    

If the vector field $v$ is sufficiently small,  then the image $X$ of $\partial Q$ under the exponential map $exp$ in the direction $v$ is a topological submanifold of $M$ homeomorphic to $\partial Q$. The vectors $v(x)$ translate over the geodesics $\gamma_t(x)$ to vectors $v_X(exp(x))$ so that $v_X$ is a vector field over $X$. In view of the transverse vector field $v_X$, by the Cairns-Whitehead theorem~\cite{Pu02}, there is a smooth approximation $Y$ of $X$. Then the manifold bounded by $Y$ that consists of $Q$ and parts $(x, \exp(x))$ of the shortest geodesics $\gamma_t(x)$  is a collar neighborhood of $Q$. 
\end{proof}

Let $M$ be a smooth manifold of dimension $n$ with boundary $\partial M$. A relative Singhof-Takens filling of $M$ by two subsets $Q_1$ and  $Q_2$ is said to be \emph{nice} if the set $Q_1$ contains $\partial M$, the pair $(Q_1, \partial M)$ is almost diffeomorphic to the pair $(\partial M\times [0,1], \partial M\times \{0\})$, and $Q_2$ is diffeomorphic to the disc $D^n$. We note that in this case both  manifolds $Q_1$ and $Q_2$ are smooth submanifolds with (smooth) boundary. 

Let now $M$ be a compact connected smooth manifold  of dimension $n$ such that $\partial M$ is the disjoint union of two manifolds $\partial_1 M$ and $\partial_2 M$.  Suppose that  $M=Q_3\cup Q_1\cup Q_2$ is a relative Singhof-Takens filling of $M$ such that $Q_1$ contains $\partial_1 M$, while $Q_2$ contains $\partial_2 M$. Suppose that  $(Q_i, \partial_iM)$ is almost diffeomorphic to $(\partial_iM\times [0,1], \partial_iM \times\{0\})$ for $i=1,2$, while $Q_3$ is diffeomorphic to a disc $D^n$. Then we say that the filling $\{Q_i\}$ is \emph{nice}.

\begin{remark}
In the original definition of a nice filling by Takens \cite{Ta68}, the boundary of $Q_3$ is required to be smooth.  We omit this requirement since the corners of a manifold can be smoothed. 
\end{remark}

To establish the converse of the Lusternik-Schnirelmann inequality we will rely on the Takens Theorem. 

\begin{theorem}[see Corollary 2.8 in \cite{Ta68}]\label{th:13}
Let $M$ be a compact manifold with a filtration  $\emptyset = M_0 \subset M_1\subset M_2\subset \ldots\subset M_k=M$ by compact submanifolds with boundary such that $M_i\subset \Int(M_{i+1})$ and $\partial M\subset M_k\setminus M_{k-1}$. Suppose that for each $i=1,..., k$, the manifold $M_i\setminus \Int(M_{i-1})$ with boundary $\partial M_{i-1}\sqcup \partial M_{i}$ admits a nice filling. Then $\Crit(M)\le k$. 
\end{theorem}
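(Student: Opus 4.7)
The argument proceeds by induction on $k$. The inductive hypothesis is that there is a smooth function $f_{k-1}\co M_{k-1}\to[0,k-1]$ with at most $k-1$ critical points, regular on $\partial M_{k-1}$, and constant equal to $k-1$ there. The base case $k=0$ is trivial, since $M_0=\emptyset$.

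For the inductive step, consider the cobordism $N_k=\overline{M\setminus M_{k-1}}$, whose boundary is $\partial M_{k-1}\sqcup \partial M$ (one of these pieces may be empty when $k=1$, in which case one uses the two-element version of a nice filling). By hypothesis $N_k$ admits a nice filling $N_k=Q_1\cup Q_3\cup Q_2$, with $(Q_i,\partial_iN_k)$ almost diffeomorphic to a product collar for $i=1,2$ and $Q_3\cong D^n$. I would build a smooth function $g\co N_k\to [k-1,k]$ with a single critical point as follows. On the collar $Q_1\simeq\partial M_{k-1}\times[0,1]$ set $g(x,t)=(k-1)+\tfrac12\phi_1(t)$ for a smooth monotone $\phi_1$ with $\phi_1(0)=0$, and on $Q_2\simeq \partial M\times[0,1]$ similarly let $g$ decrease monotonically from $k$ down to $k-\tfrac14$ in the collar direction. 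The sphere $\partial Q_3$ is partitioned by its intersections with $Q_1$ and $Q_2$; on the disc $Q_3$ one defines $g$ so as to attain the two prescribed values $k-\tfrac12$ and $k-\tfrac14$ on these boundary regions and to possess a single Morse-type critical point in the interior, whose index is determined by the handle that $Q_3$ represents in the cobordism $N_k$. Gluing $f_{k-1}$ and $g$ along $\partial M_{k-1}$ (where both take the value $k-1$) produces a smooth function $f\co M\to[0,k]$ with at most $k$ critical points, regular and constant maximal on $\partial M$.

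The main obstacle is the smooth matching of $g$ across the corner strata $Q_1\cap Q_3$, $Q_2\cap Q_3$, and $Q_1\cap Q_2\cap Q_3$, together with ensuring that the construction on $Q_3$ produces only one critical point. Both issues are handled using Property~(P3) of Definition~\ref{Filling}: near every multi-intersection point the three pieces are put into rectangular local coordinates, so the three partial definitions of $g$ can be described by compatible smooth formulas and assembled into a single smooth function. Corners may then be smoothed as in~\cite{Pu02} without affecting the critical set, since the gradient of $g$ is transverse to the corner locus by the collar construction. The diffeomorphism $Q_3\cong D^n$ lets one place a single critical point in the interior of the disc while the collar parts remain regular, so each stage of the induction adds exactly one critical point, yielding $\Crit(M)\le k$.
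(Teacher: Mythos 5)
Your overall skeleton --- induct over the filtration, make the function monotone on the two collar pieces $Q_1$, $Q_2$ of each nice filling, and concentrate one critical point in the disc $Q_3$ --- is exactly the Takens strategy that the paper relies on (the paper does not reprove Theorem~\ref{th:13}; it cites it as Corollary~2.8 of \cite{Ta68}, and the underlying construction is recapitulated in the proof of Proposition~\ref{p:Tak19}). The gap is in the only step carrying real content: the construction of $g$ on $Q_3$. You assert that one can define $g$ on the disc so that it takes the two prescribed values on $Q_3\cap Q_1$ and $Q_3\cap Q_2$ and has ``a single Morse-type critical point in the interior, whose index is determined by the handle that $Q_3$ represents.'' This is false in general: $Q_3\cap Q_1$ is an arbitrary compact codimension-$0$ submanifold of the sphere $\partial Q_3$, not a standard attaching region $S^{\lambda-1}\times D^{n-\lambda}$, so $Q_3$ is not a handle and there is no index. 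If the critical point could always be taken nondegenerate, every closed manifold would carry a Morse function with at most $\Bcat(M)+1\le \dim M+1$ critical points, contradicting the Morse inequalities already for surfaces of positive genus. The critical point produced at this stage is in general highly degenerate; it is cone-like over $Q_1\cap Q_2\cap Q_3$.

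Moreover, even dropping the Morse claim, the existence of a \emph{smooth} function on $Q_3$ with exactly one critical point and the prescribed boundary behavior is precisely the hard content of Takens' proof, and your proposal does not supply it. The actual route is: first build a continuous function that is constant (say $=0$) on all of $Q_3$ and on $Q_1\cap Q_2$, smooth and regular on the complement of $Q_3$ with $f^{-1}(0)=(Q_1\cap Q_2)\cup Q_3$; then collapse $Q_3$ to a point $p$ by a map $\Psi$ that is a diffeomorphism off $Q_3$, obtaining $\bar f$ with a single isolated, cone-like, generally non-smooth critical point at $p$; and finally invoke Takens' smoothing theorem (Theorem~2.7 of \cite{Ta68}) to replace $\bar f$ in an arbitrarily small neighborhood of $p$ by a smooth function with exactly one critical point. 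Your local-coordinate matching via Property (P3) addresses the corner strata but does not substitute for this collapsing-and-smoothing step; as written, the central existence claim is asserted rather than proved.
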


\begin{proposition}\label{p:Tak19} Under the hypotheses of Theorem~\ref{th:13}, we have $\Crit^\bullet(M)\le k$.
\end{proposition}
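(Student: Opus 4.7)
The plan is to revisit the construction underlying Theorem~\ref{th:13} and verify that the function it produces may be arranged to have only Morse critical points. Since Morse points are cone-like, Theorem~\ref{th:1} then supplies a cylindrical ball neighborhood at each critical point, giving the desired bound $\Crit^\bullet(M)\le k$.

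First, I would work slab by slab. Fix a strictly increasing sequence $a_0<a_1<\cdots<a_k$ of target values. For each $i\in\{1,\ldots,k\}$, let $N_i=M_i\setminus\Int(M_{i-1})$ and let $N_i=Q_1^i\cup Q_2^i\cup Q_3^i$ be the given nice filling, with $Q_1^i,Q_2^i$ collars of $\partial M_{i-1},\partial M_i$ and $Q_3^i$ diffeomorphic to the disc $D^n$. I would then construct $f_i\co N_i\to[a_{i-1},a_i]$ with: $f_i\equiv a_{i-1}$ on $\partial M_{i-1}$, $f_i\equiv a_i$ on $\partial M_i$, $f_i$ regular on $Q_1^i\cup Q_2^i$ (taking the collar parameter as its local expression), and a unique critical point $x_i$ in the interior of $Q_3^i$. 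Using the diffeomorphism $Q_3^i\cong D^n$, I would take $f_i$ near $x_i$ to have standard Morse form $f_i(y)=c_i-\sum_{j=1}^{n}y_j^2$ for some $c_i\in(a_{i-1},a_i)$, and then smoothly interpolate this local expression with the collar values prescribed on $\partial Q_3^i\subset Q_1^i\cup Q_2^i$; a standard cut-off argument keeps $|\nabla f_i|\ne 0$ throughout the interpolation, so no additional critical points are created.

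Second, I would glue the $f_i$'s into a global smooth function $f\co M\to\R$. On each interface $\partial M_{i-1}$ shared by $N_{i-1}$ and $N_i$, both $f_{i-1}$ and $f_i$ equal $a_{i-1}$; by rescaling within a thin sub-collar inside $Q_2^{i-1}\cap Q_1^i$ one can arrange the outward normal derivatives and all higher jets to match, producing a smooth global function whose critical set is exactly $\{x_1,\ldots,x_k\}$ and at which $f$ is Morse.

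Finally, every Morse critical point is cone-like, since in Morse coordinates $f^{-1}(f(x_i))$ is locally the zero set of a nondegenerate quadratic form, which is a topological cone with vertex $x_i$. By Theorem~\ref{th:1}, each $x_i$ admits a cylindrical ball neighborhood, so $\Crit^\bullet(f)\le k$, whence $\Crit^\bullet(M)\le k$. The main technical obstacle is the $C^\infty$-matching across the interfaces $\partial M_i$; this is exactly the issue resolved in Takens's original argument, and our Morse requirement does not interfere because the matching is performed entirely inside the critical-point-free collars $Q_1^i,Q_2^i$, while the Morse form is imposed in the disjoint region around $x_i$ in the interior of $Q_3^i$.
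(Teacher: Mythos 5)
The central step of your argument --- producing on each slab $N_i=M_i\setminus\Int(M_{i-1})$ a function with a \emph{single Morse} critical point --- is not possible in general, and this is precisely the difficulty that forces the Takens construction to use degenerate critical points. By elementary Morse theory, a cobordism from $\partial M_{i-1}$ to $\partial M_i$ carrying a function with exactly one nondegenerate critical point of index $\lambda$ is an elementary cobordism: $M_i$ is obtained from $M_{i-1}$ by attaching a single $\lambda$-handle. Your choice $f_i(y)=c_i-\sum y_j^2$ makes $x_i$ a local maximum, so the slab would have to be an $n$-handle attachment, which forces $\partial M_i\cap N_i=\emptyset$ and a sphere boundary component of $\partial M_{i-1}$. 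A general nice filling of $N_i$ imposes no such restriction, so the ``standard cut-off argument'' that you claim keeps $|\nabla f_i|\ne 0$ during the interpolation between the collar coordinate on $Q_1^i\cup Q_2^i$ and the Morse form on $Q_3^i$ must fail: the homotopy type of the slab obstructs it. The single critical point one can actually produce per slab is in general highly degenerate.

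The paper's proof instead follows Takens: choose $f$ with $f|Q_3\equiv 0$, regular and without critical points on $M\setminus Q_3$, and with $f^{-1}(0)=(Q_1\cap Q_2)\cup Q_3$; then collapse the ball $Q_3$ to a point $p$ to get a continuous $\bar f$, smooth away from $p$, with one (degenerate) critical point. The key observations are that this critical point is \emph{cone-like} --- its level set near $p$ is the cone over $Q_1\cap Q_2\cap Q_3=\partial(Q_1\cap Q_2)$ --- so Theorem~\ref{th:1} gives a cylindrical ball neighborhood, and that Takens' smoothing of $\bar f$ in a small neighborhood $U_0$ of $p$ leaves the gradient trajectories outside $U_0$ unchanged, so the cylindrical ball neighborhood survives. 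Your final paragraph (Morse implies cone-like implies cylindrical ball neighborhood) is correct as far as it goes, but it is applied to critical points that your construction cannot actually produce; to repair the argument you would need to replace the Morse model by the collapsed-ball model and then verify cone-likeness, which is what the paper does.
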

\begin{proof}  To prove Proposition~\ref{p:Tak19} it suffices to show that given a nice feeling $M=Q_3\cup Q_1\cup Q_2$ of a compact manifold $M$ with boundary components $\partial_1M\subset Q_1$ and $\partial_2M\subset Q_2$, there exists a  function that is 
\begin{itemize}
\item minimal, constant, and regular on $\partial_1 M$, \item maximal, constant, and regular on $\partial_2M$, and 
\item has one critical point in the interior of $M$. 
\end{itemize}

Furthermore, this critical point admits a cylindrical neighborhood diffeomorphic to a ball.   

The construction of the desired function is performed  in two steps. First, we choose a function $f$ on $M$  that is regular on the boundary, and such that $f|\partial_1M=-1$, $f|\partial_2M=1$, and $f|Q_3=0$. Furthermore,  we may choose the function  $f$ so that $f|(M\setminus Q_3)$ is smooth and has no singular points, and $f^{-1}(0)=(Q_1\cap Q_2)\cup Q_3$, see \cite[p.203]{Ta68}. Let $\Psi\co M\to M$ be a continuous map that takes $Q_3$ to a point $p\subset Q_3$ and that is a diffeomorphism of $M\setminus Q_3$ 
to $M\setminus \{p\}$. Define a new function $\bar f$ on $M$ by $f=\bar f\circ \Psi$. Then $\bar f$ is a continuous function that is smooth on $M\setminus \{p\}$. The critical point of $\bar{f}$ is a cone-like critical point. Indeed, the set $Q_1\cap Q_2\cap Q_3$ is a submanifold of $\partial Q_3$. In fact, it is the boundary of the submanifold $Q_1\cap Q_2$. Consequently, the critical point of $\bar{f}$ admits a neighborhood in $\bar{f}^{-1}(0)$ homeomorphic to the cone over $Q_1\cap Q_2\cap Q_3$. 
 In particular, the critical point of $\bar{f}$ admits a cylindrical ball neighborhood, see \cite{ST}.

Theorem 2.7 in \cite{Ta68} asserts that a function $\bar{f}$ can be modified in an arbitrarily small neighborhood $U_0$ of $p$ so that the modified function $\tilde f$ is smooth and has exactly one critical point. 

We note that the gradient flow curves of $\bar{f}$ are the same as those of $\tilde{f}$ except for those gradient flow curves that pass through the neighborhood $U_0$.  Therefore, every cylindrical neighborhood of the critical point $p$ of $\bar{f}$ whose interior contains $U_0$ is a cylindrical neighborhood of the critical point of $\tilde{f}$. 
\end{proof}

We are now in position to prove the converse of the Lusternik-Schnirelmann inequality.

\begin{theorem}\label{th:18upper}
For any closed manifold $M$, we have $\Bcat(M)+1\geq \Crit^\bullet(M)$.
\end{theorem}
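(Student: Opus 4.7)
The plan is to reduce to Proposition~\ref{p:Tak19} (a consequence of Takens's Theorem~\ref{th:13}) by converting a minimal Singhof--Takens filling of $M$ into an exhausting filtration whose successive differences admit nice fillings. Set $k=\Bcat(M)+1$ and fix a filling $Q_1,\dots,Q_k$ of $M$ by smooth balls with corners realizing $\Bcat(M)$. It suffices to construct compact submanifolds with boundary
\[
\emptyset = M_0 \subset M_1\subset \cdots \subset M_k = M
\]
with $M_{i-1}\subset \Int(M_i)$ such that each compact manifold $N_i := \overline{M_i\setminus \Int(M_{i-1})}$ admits a nice filling in the sense defined before Theorem~\ref{th:13}.

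The submanifolds $M_i$ will be smoothings of slightly enlarged copies of the unions $P_i := Q_1\cup \cdots \cup Q_i$. Fix a Riemannian metric on $M$ and a decreasing sequence of collar widths $\varepsilon_1 > \cdots > \varepsilon_{k-1} > 0$. For $1\le i < k$ define $M_i$ by taking an outward $\varepsilon_i$-collar of $P_i$ and applying the Cairns--Whitehead smoothing procedure of Lemma~\ref{lQ:15} (cf.~Remark~\ref{rem:CWhi}), arranging the widths so that $M_{i-1}\subset \Int(M_i)$; set $M_0=\emptyset$ and $M_k=M$. Because the filling axioms (P1)--(P3) force the faces along which $Q_i$ meets the other $Q_j$'s to be smooth manifolds of strictly lower dimension, we may shrink $Q_i$ slightly away from those faces to obtain a smooth ball with corners $Q_i'\subset \Int(N_i)$ disjoint from $\partial N_i$. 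By Remark~\ref{rem:CWhi}, each $Q_i'$ is again diffeomorphic to a disc.

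The nice filling of $N_i$ is then assembled from at most three pieces: the disc $Q_i'$, a collar of $\partial M_{i-1}$ in $N_i$, and a collar of $\partial M_i$ in $N_i$, chosen so that the two collars meet along $\partial Q_i'$ and together exhaust $\overline{N_i\setminus Q_i'}$; when $i=1$ or $i=k$, one of $\partial M_{i-1}$, $\partial M_i$ is empty and the corresponding collar is omitted. Granted these data, Proposition~\ref{p:Tak19} produces a smooth function on $M$ with at most $k$ critical points, each admitting a cylindrical ball neighborhood, whence $\Critb(M)\le k = \Bcat(M)+1$. The main obstacle lies in the geometric bookkeeping of the second and third paragraphs: one must choose the widths $\varepsilon_i$, the Cairns--Whitehead smoothings, and the shrinking of $Q_i$ compatibly so that each $N_i$ genuinely splits as the almost-disjoint union of a disc and one or two cylindrical collars meeting cleanly along $\partial Q_i'$. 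This in turn rests on a careful local analysis near the corners of the $Q_j$'s using the special coordinate charts from axiom (P3) of Definition~\ref{Filling}, dual to the filling-extension argument in the proof of Theorem~\ref{th:13lower}.
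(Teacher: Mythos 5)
Your proposal follows essentially the same route as the paper's proof: the paper likewise builds the filtration $\{M_i\}$ as Cairns--Whitehead collar neighborhoods of the partial unions $U_1\cup\cdots\cup U_i$, exhibits each difference $M_i\setminus\Int(M_{i-1})$ as a nice filling consisting of a disc (a trimmed copy of $U_i$) together with collars of $\partial M_{i-1}$ and $\partial M_i$, and then invokes Proposition~\ref{p:Tak19}. The geometric bookkeeping you defer is exactly what the paper carries out explicitly --- verifying that the outer piece is almost diffeomorphic to $\partial M_i\times[0,1]$ and repairing the high-order corners by passing to collar neighborhoods $Q_3'$, $Q_1'$, $Q_2'$ --- so no new idea is missing from your outline.
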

\begin{proof}
Suppose that $\Bcat(M)=k-1$. In particular, the manifold $M$ admits a Singhof-Takens filling of $M=U_1\cup \cdots\cup U_k$  by smooth closed balls with corners.
Let $M_1$ be a smooth compact collar neighborhood of $U_1$, and, by induction for $i=2,..., k$, let $M_i$ be the union of $M_{i-1}$ and a smooth compact collar neighborhood of $U_i$ if $U_i$ is disjoint from $M_{i-1}$, and let $M_{i}$ be a smooth compact collar neighborhood of $U_i\cup M_{i-1}$ otherwise. Then $\{M_i\}$ forms a filtration of $M$ by compact submanifolds with (smooth) boundary. 

To begin with let us show that $M_1$ admits a nice filling by sets $Q_1$ and $Q_2$.  Put $Q_1=M_1\setminus \Int(U_1)$ and $Q_2=U_1$.  Since $M_1$ is a collar neighborhood of $U_1$, the pair $(Q_1,\partial M_{1})$ is almost diffeomorphic to the pair $(\partial M_{1}\times [0,1],\partial M_{1}\times\{0\})$.
Since $Q_2$ coincides with $U_1$ and $U_1$ is the first set in a Singhof-Takens filling $\{U_i\}$, the set $Q_2$ is diffeomorphic to a ball. Thus, the sets $Q_2$ and $Q_1$ form a nice filling of $M_1$.

\begin{figure}
\includegraphics[width=.7\linewidth]{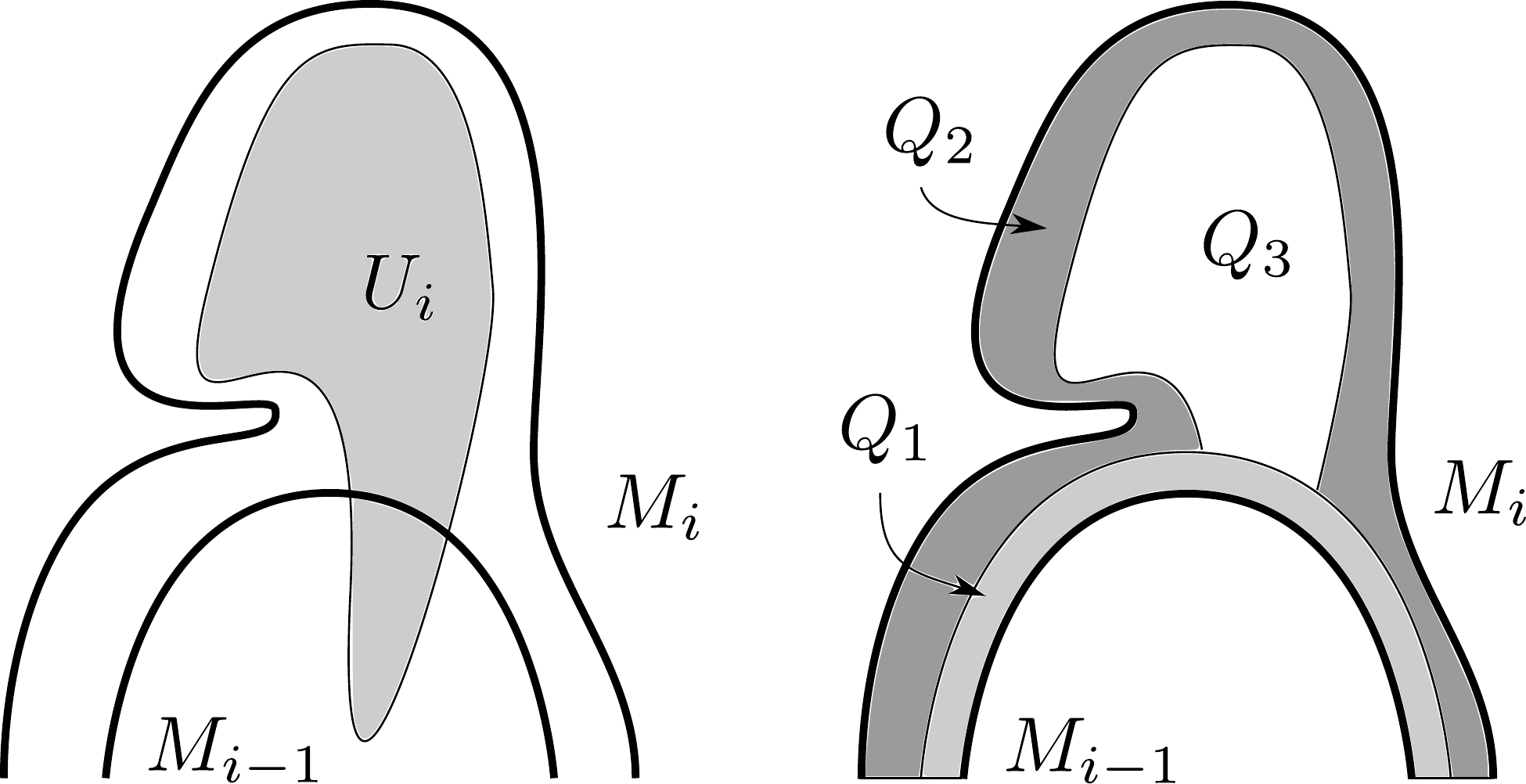}
\caption{The smooth compact collar neighborhood $M_i$, for $i>1$, of $U_i\cup M_{i-1}$ (on the left), and a nice filling of $M_i\setminus \Int(M_{i-1})$ on the right by $Q_1, Q_2$ and $Q_3$ provided that $U_i$ is not disjoint from $M_{i-1}$. The areas $Q_1$, $Q_2$ and $Q_3$  are light grey, dark grey and white respectively. }
\label{fig:1}
\end{figure}

For $i>1$ if $U_i$ is disjoint from $M_{i-1}$, then $M_i\setminus M_{i-1}$ admits a nice filling by a single smooth ball. Suppose now that $U_i$ is not disjoint from $M_{i-1}$.
Let us show that $M_i\setminus \Int(M_{i-1})$ admits a nice filling for $i>1$ by sets $Q_1, Q_2$ and $Q_3$, see Fig.~\ref{fig:1}. 
Let $Q_1$ be a collar neighborhood of $\partial M_{i-1}$ in $M_i\setminus \Int(M_{i-1})$. Since $M_{i-1}$ is a submanifold of $M$ with smooth boundary, the same is true for $Q_1$. We may assume that the intersection $Q_1\cap U_i$ is a neighborhood of $\partial M_{i-1}\cap U_i$ in $U_i\setminus \Int(M_{i-1})$ of the form $(\partial M_{i-1}\cap U_i)\times [0,1]$. We define $Q_2$ to be the complement in $M_i\setminus \Int(M_{i-1})$ to  $\Int(U_i\cup Q_1)$, and $Q_3$ to be the complement in $U_i$ to $\Int(Q_1\cup M_{i-1})$.

\iffalse
\begin{figure}
\begin{minipage}{.5\textwidth}
	\centering
	\includegraphics[width=.6\linewidth]{almost.png}
\end{minipage}%
\begin{minipage}{.5\textwidth}
	\centering
	\includegraphics[width=.6\linewidth]{nice.png}
\end{minipage}
\end{figure}
\fi 

%\includegraphics[scale=.25]{almost nice.png}

%\includegraphics[scale=.25]{nice.png}

 As the sets $Q_2$ and $Q_3$ may have high order corners, the sets $Q_1, Q_2$ and $Q_3$ do not form a filling of the manifold $M_i\setminus \Int M_{i-1}$. However, 
 the three sets $Q_1, Q_2$ and $Q_3$ clearly cover the manifold  $M_i\setminus \Int M_{i-1}$,  and their interiors are disjoint.  
Since $Q_1$ is a collar neighborhood of the smooth boundary component $\partial M_{i-1}$ of the smooth submanifold $M_i\setminus \Int(M_{i-1})$, the pair $(Q_1,\partial M_{i-1})$ is almost diffeomorphic to the pair $(\partial M_{i-1}\times [0,1],\partial M_{i-1}\times\{0\})$. Since $Q_1\cap U_i$ is of the form $(\partial M_{i-1}\cap U_i)\times [0,1]$ and $U_i$ is a smooth ball with corners, it follows that $Q_3=U_i\setminus \Int(Q_1\cup M_{i-1})$ is a smooth ball with corners as well. Let us show that $Q_2$ is almost diffeomorphic to $\partial M_i\times [0,1]$. 
We observe that $Q_2=(M_i\setminus \Int M_{i-1})\setminus \Int(Q_1\cup U_i)$ can be written as 
\[
M_i \setminus\  \Int(M_{i-1} \cup (\partial M_{i-1}\times [0,1])\cup U_i)
\]
where we identify $Q_1$ with $ \partial M_{i-1} \times [0,1]$. Consequently, the manifold with corners $Q_2$ is almost 
diffeomorphic to $M_i\setminus \Int(M_{i-1}\cup U_i)$. On the other hand, by definition, the manifold $M_i$ is a collar neighborhood of $M_{i-1}\cup U_i$. Therefore, up to almost diffeomorphism, the complement to $\Int(M_{i-1}\cup U_i)$ in $M_i$ is the collar $\partial M_i\times [0,1]$, as required. 

Let $Q_3'$ be a collar neighborhood of $Q_3$ in $M$, see Lemma~\ref{lQ:15}, such that $Q_3'$ is a submanifold in the interior of $M_{i}$ and $Q_3'$ is disjoint from $M_{i-1}$. Then $Q_3'$ is diffeomorphic to a ball. We now modify $Q_1$ by replacing it with a new set $Q_1'$ obtained from a collar neighborhood $\mathcal{U}(Q_1)$ of $Q_1$ by removing the interior of $Q_3'\cap \mathcal{U}(Q_1)$. In this construction we may choose the collar neighborhood $\mathcal{U}(Q_1)$ of $Q_1$ so that its boundary intersects the boundary of $Q_3'$ transversally. Finally, we redefine $Q_2'$ to be the complement in $M_i\setminus \Int M_{i-1}$ to the interior of the union of $Q_1'$ and $Q_3'$. The resulting sets form a nice filling of $M_i\setminus \Int M_{i-i}$.

Thus, indeed, for every $i$, the manifold $M_i\setminus \Int(M_{i-1})$ admits a nice filling. 

Therefore, by Proposition~\ref{p:Tak19}, the manifold $M$ admits a function with at most $k$ critical points. 
\end{proof}


\begin{thebibliography}{99}
\bibitem{BL} Th.~Br\"ocker, Differentiable germs and catastrophes. Translated from the German, last chapter and bibliography by L. Lander. London Mathematical Society Lecture Note Series, No. 17. Cambridge University Press, Cambridge-New York-Melbourne, 1975. pp. 179.
\bibitem{Br81} H.~Brodersen, 
A note on infinite determinacy of smooth map germs.
Bull. London Math. Soc. 13 (1981), no. 5, 397-402.
\bibitem{Br61} M.~Brown, The monotone union of open $n$-cells, in an open $n$-cell, Proc. Amer. Math. Soc., 12 (1961), 812-814.
\bibitem{Co98} O.~Cornea, Cone-decompositions and degenerate critical points, Proc. London Math. Soc. 77 (1998), 437-461. 
\bibitem{CLOT} O.~Cornea,  G.~Lupton,  J.~Oprea, D.~Tanr\'e, 
Lusternik-Schnirelmann category.
Mathematical Surveys and Monographs, 103. American Mathematical Society, Providence, RI, 2003. xviii+330 pp.
\bibitem{Da84} E.~N.~Dancer, Degenerate critical points, homotopy indices and Morse inequalities, J. Reine Angew. Math. 350 (1984), 1-22. 
\bibitem{Fu21} L.~Funar,  Maps with finitely many critical points into high dimensional manifolds. Rev. Mat. Complut. 34 (2021), no. 2, 585-595.
\bibitem{Ga} C.~Gavrila, The Lusternik-Schnirelmann theorem for the ball category, Lusternik-Schnirelmann category and related topics (Mt. Holyoke, 2001), Contemporary
Mathematics, vol.316, Amer. Math. Soc., 2002, p.113-119.
\bibitem{GK} D.~Gay, R.~Kirby, Trisecting $4$-manifolds, Geom. Topol. 20 (2016) 3097-3132. 
\bibitem{GWP} C.~G.~Gibson, K.~Wirthm\"uller, A.~A.~du Plessis, E.~J.~N.~Looijenga, 
Topological stability of smooth mappings.
Lecture Notes in Mathematics, Vol. 552. Springer-Verlag, Berlin-New York, 1976. 155 pp.
\bibitem{GG91} J.~C.~G\'omez-Larra\~naga, F. Gonz\'alez-Acu\~na, Lusternik-Schnirelmann category of $3$-manifolds, Topology, 31 (1992), 791-800.
\bibitem{Ki78} H.~C.~King, Topological type of isolated singularities. Ann. Math. 107 (1978), 385-397. 
\bibitem{Ki90} H.~C.~King, Topology of isolated critical points of functions on singular spaces, Stratifications, singularities and differential equations, II (Marseille, 1990; Honolulu, HI, 1990), 63-72, Trotman, D., Wilson, L.C. eds., Travaux en Cours, 55, Hermann, Paris, (1997).
\bibitem{Ku77} W. Kucharz, Jets suffisants et fonctions de détermination finie,  C. R. Acad. Sci. Paris Sér. A-B 284 (1977).
\bibitem{LS34} L.~Lusternik, L.~Schnirelmann, Methodes Topologiques dans les Problemes Variationels, Herman, Paris,1934.
\bibitem{Mi} John Milnor, Differential topology, chapter 6 in T. L. Saaty (ed.) Lectures On Modern Mathematic II, 1964.
\bibitem{Mi65} J. Milnor,  Lectures on the h-cobordism theorem. Notes by L. Siebenmann and J. Sondow Princeton University Press, Princeton, N.J. 1965 v+116 pp.
\bibitem{Mi69} J. Milnor, Singular points of complex hypersurfaces, Princeton University Press and the Tokyo University Press, 1968. 
\bibitem{N77} N. T\d{u}' Cu'\`o'ng et al, Sur les germes de fonctions infiniment d\'{e}termin\'{e}s, C. R. Acad. Sci. Paris S\'{e}r. A-B, 285, 1977, A1045--A1048.
\bibitem{Pu02} Ch.~Pugh, 
Smoothing a topological manifold. (English summary)
Topology Appl. 124 (2002), no. 3, 487-503.
\bibitem{Ro50} E.~H.~Rothe, A relation between the type numbers of a critical point and the index of the corresponding field of gradient vectors. Erhard Schmidt zum 75. Geburtstag gewidmet. Mathematische Nachrichten, 4 (1950), 12-27.
\bibitem{Ro52} E.~H.~Rothe, A remark on isolated critical points. Amer. J. Math. 74 (1952), 253-263.
\bibitem{ST38} G.~Seifert, B.~Threlfall, Variationsrechnung im Grossen, Hamburger Math Einzelschr., Heft 24, Teubner, Leipzig-Berlin, 1938. 
\bibitem{Si79} W.~Singhof, Minimal coverings of manifolds with balls, Manuscripta Math., 29 (1979), 385-415. 
\bibitem{St60} J.~Stallings, Polyhedral homotopy spheres, Bull. Amer. Math. Soc., 66 (1960), 485-488.  
\bibitem{St93} R. Stong, Simply-connected 4-manifolds with a given boundary. Topology Appl. 52 (1993), no. 2, 161-167).
\bibitem{Tak} F.~Takens, Isolated critical points of $C^\infty$ and $C^\omega$ functions. Nederl. Akad. Wetensch. Proc. Ser. A 70=Indag. Math. 29 (1967), 238-243.
\bibitem{Ta68} F.~Takens, The minimal number of critical points of a function on a compact manifold and the Lusternik-Schnirelmann category, Inventiones Math., 6 (1968), 197-244.
\bibitem{Wa81}  C.~T.~C.~Wall,  Finite determinacy of smooth map-germs. Bull. London Math. Soc. 13 (1981), no. 6, 481-539.  
\bibitem{Wi81}  L.~C.~Wilson, Infinitely determined map germs. Canadian J. Math. 33 (1981), no. 3, 671-684.
\bibitem{Wi82}  L.~C.~Wilson, Mapgerms infinitely determined with respect to right-left equivalence. Pacific J. Math. 102 (1982), no. 1, 235-245.
%\bibitem{ST}  R.~Sadykov, S.~Trunov,  Isolated singularities of hypersurfaces, arXiv:2207.10072.
\end{thebibliography}
\end{document}